\newcommand{\U}{\mathfrak{U}}
\newcommand{\K}{\mathbb{K}}
\newcommand{\lt}{\mathrm{lt}}
\newcommand{\lm}{\mathrm{lm}}
\newcommand{\supp}{\mathrm{supp}}
\newcommand{\Z}{\mathbb{Z}}
\newcommand{\C}{\mathbb{C}}
\newcommand{\N}{\mathbb{N}}
\newcommand{\F}{\mathbb{F}}
\newcommand{\chr}{\operatorname{char}}
\newcommand{\Rad}{\operatorname{Rad}}
\newcommand{\norm}{\operatorname{N}}
\newcommand{\Deg}{\operatorname{Deg}}
\newtheorem{definition}{Definition}
\newtheorem{proposition}{Proposition}
\newtheorem{theorem}{Theorem}
\newtheorem{corollary}{Corollary}
\newtheorem{lemma}{Lemma}
\author{Ivan Yudin
\\ CMUC, Departamento de Matematica\\ Universidade de Coimbra\\
3001-454 Coimbra\\ Portugal\\
\texttt{yudin@mat.uc.pt}
\thanks{The work is supported by the FCT Grant SFRH/BPD/31788/2006 
and
by CMUC and FCT (Portugal), through
European program
COMPETE/FEDER. } }
\title{Gr\"obner basis and the Anick resolution for $\U_{\K}(sl^+_3)$.}
\begin{document}
\maketitle
\section{Introduction}

Despite extension groups between modules over an algebra are very easy to define
and taught nowadays in every standard course in homological algebra, it is still
to be very difficult to compute them explicitly for a given pair of modules. One
of such problems is a computation of extension groups between Weyl modules over
the Schur algebra $S(n,r)$. It was shown in the joint work~\cite{apsme} of the
author with Ana Paula Santana that this problem is closely related to the
construction of a minimal projective resolution of the trivial module $\K$ over
Kostant form $\U_\K(sl^+_n)$ of the universal enveloping algebra of the  Lie algebra $sl^+_n$. 

In this paper we compute the first three steps of a minimal projective
resolution of $\K$ for  $n=3$. For this we use the Anick
resolution constructed in~\cite{anick}. Our result depends on the knowledge of
a Gr\"obner basis for $\U_K(sl^+_n)$.  

In the Section~\ref{basis} we recall the definition of Gr\"obner basis and in
the Section~\ref{Anickres} the construction of the Anick's resolution. 
Then we proceed with the definition of $\U_\K(sl^+_n)$ in Section~\ref{kostant}. 
The Sections~\ref{bigbasis},~\ref{smallbasis},~\ref{firststeps} contain new
results.  
In particular, we describe the first three steps of the minimal projective
resolution for trivial module over $\U_\K\left( sl_3^+ \right)$.

\section{Gr\"obner basis}
\label{basis}
Let $X$ be a set. We denote by $X^*$ the set of all words with letters
in $X$. Then $X^*$ is a \emph{free monoid} generated by $X$ with the
multiplication given by concatenation of words and the unity $e$ given
by the empty word. 
There is a partial order $\prec$ on $X^*$ given by the incusion of words. Note
that $\prec$ is the coarsest partial order on $X^*$ such that $X^*$ is an
ordered monoid with $e$ the least element of $X^*$. A \emph{monoidal order} on
$X^*$ is a total order that refines $\prec$. 

Let $\K$ be a field. We denote by $\K\left\langle X^* \right\rangle$ a vector
space spanned by $X^*$. A vector space $\K\left\langle X^* \right\rangle$ is a
\emph{free associative algebra} generated by $X$. 
We will call the elements of $X^*$ \emph{monomials}, and the elements of
$\K\left\langle X^* \right\rangle$ \emph{polynomials}.
Define the support of $p\in \K\left\langle X^* \right\rangle$ to be the set of
element in $X^*$ with non-zero coefficients in $p$. If $\le$ is a monoidal order
on $X^*$ then we define the \emph{ leading monomial} $\lm(p)$ of $p\in
\K\left\langle X^* \right\rangle$ to be the maximal element of support of
$p$ with respect $\le$. Define the  \emph{leading term} $\lm(p)$ of $p$ to be the leading
monomial of $p$ with coefficient it enters in $p$. A monoidal order $\le$ on
$X^*$ can be extended to a partial order $\le$ on $\K\left\langle X^*
\right\rangle$ by the rule
\begin{align*}
	p\le q \Longleftrightarrow & \lm(p)<\lm(q)\\
	& \lt(p)=\lt(q) \mbox{ and } p-\lt(p) \le q-\lt(q).
\end{align*}
Note that in the case $\lm(p)=\lm(q)$ but $\lt(p)\not=\lt(q)$ the polynomials 
$p$ and $q$ are incompatible.

The pair $(m,f)$, where $m$ is a monomial and $f$ an element of $\K\left\langle
X^* \right\rangle$, is called a \emph{rewriting rule} if $m> f$.
Note that every element $p\in \K\left\langle X^* \right\rangle$ gives a
rewriting rule $r(p)=(\lm(p),f)$ where $f=(p-\lt(p))/\lambda$ and $\lambda$ is the
leading coefficient of $p$. 
We will say that 
$h$ is a result of application of $(m,f)$ to $g$ if there is $m'\in\supp(g)$
such that  $m'=umv$ for some $u$, $v\in X^*$, and
$h=g-\lambda m'+\lambda ufv$, where $\lambda$ is the coefficient of $m$ in
$g$. We will write in this situation $g \to_{r} h$. If $r=r(p)$ for some
$p\in \K\left\langle X^* \right\rangle$ then we write $g\to_f h$ instead of
$g \to_{r(p)} h$. Let $S$ be a collection of rewriting rules or polynomials. Then 
$g\to_S h$ denotes that there is $r\in S$ such that $g\to_r h$. Formally, $\to_S$ is a
set relation on $\K\left\langle X^* \right\rangle$. We denote by $\to_S^*$ the
reflexive and transitive closure of $\to_S$.  
An element $g$ of $\K\left\langle X^* \right\rangle$ is called
\emph{non-reducible} with respect to the set of rewriting rules or polynomials
$S$ if $g$ is a minimal element
of $\K\left\langle X^* \right\rangle$ with respect to $\to_S^*$.

\begin{definition}
Let $A$ be an algebra over a field $\K$ and $X=\left\{\, a_i \,\middle|\, i\in I
\right\}$ a set of generators of $A$. Denote by $\pi$ the canonical projection
from $\K\left\langle X^* \right\rangle$ to $A$. We say that a subset $S$ of
$\ker\left( \pi \right)$ is a \emph{Gr\"obner basis} of $\ker(\pi)$ if
$\pi$ restricted on the vector space of non-reducible elements with respect
$\left\{\, r(p) \,\middle|\, p\in S \right\}$ is an isomorphism of $\K$-vector
spaces. A Gr\"obner basis $S$ is called \emph{reduced} if elements $p\in S$
are non-reducible with respect to $S\setminus \left\{ p
\right\}$.
\end{definition}
Suppose that $\le$ is an artinian monoidal order on $X^*$, that is every
descending chain in  $X^*$ stabilizes. Let $f\in \K\left\langle X^* \right\rangle$. If
$f$ is reducible with respect to a Gr\"obner basis then there is $f_1$ such that
$f\to_S f_1$. By definition of Gr\"obner basis $f_1<f$ with respect to
the induced ordering on $\K\left\langle X^* \right\rangle$. If $f_1$ is
reducible we can find $f_2$ such that $f_1\to_S f_1$, $f_1>f_2$ and so on. Thus
we get a descending sequence $f>f_1>f_2>\dots$. As we assumed that the ordering
$\le$
is artinian this sequence have to break. Thus there is $f'$ that is
non-reducible with respect to $S$ and $f\to_S f'$. We call $f'$ the normal
form of $f$ with respect to $S$ and denote it by $NF(f,S)$. Note that the use of
the article ``the'' is justified by the fact that $f'$ is unique. In fact
suppose there are $f'$ and $f''$ such that $f\to_S f'$ and $f\to_S f''$. Then 
$f'-f''=(f'-f)+(f-f'')\in\ker(\pi)$ is an element of the kernel of the natural
projection $\pi\colon \K\left\langle X^* \right\rangle\to A$. Moreover, all
monomials in $f'-f''$ are non-reducible with respect to $S$. Since the images of
non-reducible monomials with respect to $S$ give a basis of $A$ under the map
$\pi$ it immediately follows that $f'-f''=0$.

The notion of Gr\"obner basis is closely connected with the notion of
critical pairs. We say that two monomials $m_1$, $m_2\in X^*$ \emph{overlaps} if there are
$u$, $v$, $w\in X^*$ such that $m_1= uv$ and $m_2=vw$. Note that two given
monomials can have different overlappings. To make things more convenient we define
an \emph{overlapping} as a triple $(m,m_1,m_2)$, such that there are $u$,
$v\in X^*$ such that $m=m_1v$ and $m=um_2$. 

\begin{definition}
	A \emph{critical pair} is a triple $(w,r_1,r_2)$, where $w$ is a
word	
	and $r_1=(m_1,f_1)$, $r_2=(m_2,f_2)$ are rewriting rules such that 
	there are $u$, $v\in X^*$ with the property
	$$
	w=um_1=m_2v \mbox{ or } w=um_1v = m_2.
	$$
	A word
	$w$ is called the \emph{tip} of the critical pair $(w,r_1,r_2)$.
\end{definition}

Let $(w,r_1,r_2)$ be a critical pair with $r_1$, $r_2\in S$ and $u$, $v\in X^*$
such that $w=um_1=m_2v$ (or $w=um_1v=m_2$). It is called reducible if $uf_1-f_2v
\to_S^* 0$ (respectively $uf_1v-f_2\to_S^* 0$). 
The set of rewriting rules $S$ is called complete if all critical pairs
$(w,r_1,r_2)$ with $r_1$, $r_2\in S$ are reducible.

\begin{theorem}
	\label{groebner}Suppose $\le$ is artinian monoidal ordering on
	$X^*$. A subset $S$ of $\K\left\langle X^* \right\rangle$ is a Gr\"obner
	basis of a two-sided ideal $I\subset \K\left\langle X^*
	\right\rangle$ if and only if the set of rewriting rules $\left\{\,
	r(p)
	\,\middle|\, p\in S \right\}$ is complete. 
\end{theorem}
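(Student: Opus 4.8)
The plan is to analyse, in both directions, the rewriting relation $\to_S$ attached to $\left\{\, r(p) \,\middle|\, p\in S \right\}$. Write $V$ for the $\K$-span of the monomials that are non-reducible with respect to $S$; by definition $S$ is a Gr\"obner basis of $I$ exactly when $\pi|_V\colon V\to A=\K\langle X^*\rangle/I$ is an isomorphism, and one checks this forces $I$ to be the two-sided ideal generated by $S$. Two facts are used throughout. First, for $p\in S$ a one-step reduction $g\to_{r(p)}g'$ differs from $g$ by a two-sided multiple of $p$, so every reduction preserves cosets modulo $I$. Second, since $\le$ is artinian, every $g$ can (by the argument recalled before the theorem) be reduced to some $S$-non-reducible element, which lies in the coset of $g$ modulo $I$; this already shows $\pi|_V$ is surjective, with no use of completeness. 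Hence in both implications only injectivity of $\pi|_V$ versus the critical pair condition is at stake.

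For the implication ``$S$ is a Gr\"obner basis $\Rightarrow$ the rules are complete'', let $(w,r_1,r_2)$ be a critical pair with $r_i=r(p_i)$, $p_i\in S$, say $w=um_1=m_2v$, and rescale $p_1,p_2$ to be monic. Then $g:=u\,p_1-p_2\,v\in I$, the leading terms $u\,m_1=w=m_2\,v$ cancel in $g$, and unwinding the definition of $f_1,f_2$ shows $g=\pm(uf_1-f_2v)$. Reduce $g$ to an $S$-non-reducible $g'$; then $g'\in I\cap V$, so injectivity of $\pi|_V$ forces $g'=0$, i.e.\ $uf_1-f_2v\to_S^*0$. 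This is precisely reducibility of the critical pair, and the inclusion case $w=um_1v=m_2$ is handled identically. Hence $S$ is complete.

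For the converse, assume all critical pairs over $S$ are reducible; by the first paragraph it remains only to show $I\cap V=0$. Suppose $0\neq v\in I\cap V$ and write $v=\sum_i\lambda_i\,u_i\,p_i\,w_i$ with $\lambda_i\in\K^\times$, $u_i,w_i\in X^*$, $p_i\in S$. Among all such representations pick one minimising, first, the $\le$-largest of the monomials $M_i:=u_i\,\lm(p_i)\,w_i$, and then the number $c$ of indices attaining that largest value $M$; this is a legitimate double induction, since a total artinian order is a well-order. If only one index attains $M$, then $M\in\supp(v)$ while $M$ is divisible by some $\lm(p_i)$, contradicting $v\in V$. If two indices $i,j$ attain $M$, examine the occurrences of $\lm(p_i)$ and $\lm(p_j)$ inside $M$: when they are positionally disjoint one rewrites both occurrences via $r(p_i)$ and $r(p_j)$, obtaining a representation of $v$ with strictly fewer summands at level $M$ and none above it; when they overlap or one contains the other, this configuration is a critical pair, and its reducibility expresses the corresponding $uf_i-f_jv$ (or $uf_iv-f_j$) as a $\K$-combination of two-sided $S$-multiples of leading monomial strictly below the tip, which substituted into $v$ again strictly decreases $(M,c)$. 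In every case minimality is contradicted, so $I\cap V=0$ and $S$ is a Gr\"obner basis. Equivalently the converse can be run through Newman's Lemma: $\to_S$ terminates, so confluence is equivalent to local confluence, and the three sub-cases just listed are exactly the verification of local confluence against the critical pairs.

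I expect the last step --- deducing $I\cap V=0$ from completeness --- to be the main obstacle, since it is where one passes from a single pair of overlapping leading monomials to an arbitrary element of the ideal, and where the bookkeeping of leading terms and cancellations must be tight enough for the induction on $(M,c)$ to genuinely decrease. The positionally-disjoint and nested sub-cases are routine but must be written out, and one must line up the monic rescaling and the precise shape of the $f_i$ so that ``reducibility of the critical pair'' is verbatim the statement that feeds the induction.
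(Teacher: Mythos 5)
The paper states Theorem~\ref{groebner} without proof --- it is the classical Diamond Lemma / noncommutative Buchberger criterion --- so there is no in-paper argument to compare against. Judged on its own, your outline is the standard proof and is correct: surjectivity of $\pi$ restricted to the span $V$ of non-reducible monomials follows from termination alone; the forward direction reduces the S-polynomial of a critical pair to an element of $I\cap V$ and invokes injectivity; the converse is the usual double induction on the pair (largest monomial $M=u_i\,\lm(p_i)\,w_i$ occurring in a representation $v=\sum_i\lambda_i u_i p_i w_i$, number of indices attaining it), with the three sub-cases (single attainer, disjoint occurrences, overlapping occurrences) handled exactly as they must be. You correctly identify the delicate step: converting ``$uf_1-f_2v\to_S^*0$'' into a representation by two-sided multiples whose top monomials lie strictly below the tip, which is what feeds the induction.

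Two points deserve to be made explicit rather than left implicit. First, as literally stated the converse is false without assuming $I$ is the two-sided ideal generated by $S$ (the empty set is vacuously complete but is a Gr\"obner basis only of the zero ideal); your representation $v=\sum_i\lambda_i u_ip_iw_i$ silently uses $I=(S)$, and this hypothesis should be recorded. Second, you tacitly correct the paper's sign convention: with $f=(p-\lt(p))/\lambda$ as defined in Section~\ref{basis}, the replacement $m\mapsto f$ does \emph{not} preserve cosets modulo $(p)$; the intended rule is $m\mapsto -f$, i.e.\ $f=-(p-\lt(p))/\lambda$, and your identity $g=\pm(uf_1-f_2v)$ only comes out right under that convention. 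Finally, termination of $\to_S$ on polynomials (as opposed to monomials) requires the multiset-order or leading-term induction argument; the paper glosses this as well, so it is worth a sentence in a complete write-up.
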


We shall need the following proposition
\begin{proposition}
 \label{subsystem} Suppose $R$ is a complete rewriting system in
 variables $X$ and $Y$ is a subset of $X$. We denote by $R(Y)$ the
 subset of $R$ that consist from all the rules $(m,p)$ such that
 $m\in Y^*$. If for all $(m,p)\in R(Y)$ we have $p\in \K\left\langle
 Y^* \right\rangle$ then $R(Y)$ is a complete rewriting system. 
\end{proposition}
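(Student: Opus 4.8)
The plan is to use two facts: that $Y^{*}$ is closed under passing to sub-words, and that, by hypothesis, the right-hand side of every rule in $R(Y)$ already lies in $\K\left\langle Y^{*}\right\rangle$. The crucial observation is a closure property of reductions: \emph{if $g\in\K\left\langle Y^{*}\right\rangle$ and $g\to_{R}h$, then in fact $g\to_{R(Y)}h$ and $h\in\K\left\langle Y^{*}\right\rangle$}. Indeed, such a step requires a rule $(m,p)\in R$ and a monomial $m'\in\supp(g)$ with $m'=u'mv'$; since $g\in\K\left\langle Y^{*}\right\rangle$ we have $m'\in Y^{*}$, hence $m$, $u'$ and $v'$ — being sub-words of $m'$ — all belong to $Y^{*}$. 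In particular $(m,p)\in R(Y)$, so $p\in\K\left\langle Y^{*}\right\rangle$ by assumption, and the reduct $g-\lambda m'+\lambda u'pv'$ again lies in $\K\left\langle Y^{*}\right\rangle$. Iterating this along a reduction chain, one gets: for every $g\in\K\left\langle Y^{*}\right\rangle$, a chain $g\to_{R}^{*}0$ is literally an $R(Y)$-chain, so $g\to_{R}^{*}0$ implies $g\to_{R(Y)}^{*}0$.

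Next I would verify that the polynomial attached to any critical pair of $R(Y)$ lies in $\K\left\langle Y^{*}\right\rangle$. Let $(w,r_{1},r_{2})$ be a critical pair with $r_{1}=(m_{1},f_{1})$, $r_{2}=(m_{2},f_{2})\in R(Y)$; then $m_{1},m_{2}\in Y^{*}$ and $f_{1},f_{2}\in\K\left\langle Y^{*}\right\rangle$ by hypothesis. In the nested case $w=um_{1}v=m_{2}$, the words $u$ and $v$ are sub-words of $m_{2}\in Y^{*}$, so $u,v\in Y^{*}$ and $uf_{1}v-f_{2}\in\K\left\langle Y^{*}\right\rangle$. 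In the overlap case $w=um_{1}=m_{2}v$ with $m_{1}$ and $m_{2}$ genuinely overlapping inside $w$ (that is, $|u|<|m_{2}|$), the word $u$ is a prefix of $m_{2}\in Y^{*}$ and $v$ is a suffix of $m_{1}\in Y^{*}$, so again $u,v\in Y^{*}$ and $uf_{1}-f_{2}v\in\K\left\langle Y^{*}\right\rangle$. (A disjoint juxtaposition $|u|\ge|m_{2}|$, if it is counted as a critical pair at all, reduces to $0$ by applying $r_{2}$ and then $r_{1}$, using only rules of $R(Y)$.)

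Finally I would assemble the argument. Since $R(Y)\subseteq R$, every critical pair of $R(Y)$ is also a critical pair of $R$, and completeness of $R$ gives $uf_{1}-f_{2}v\to_{R}^{*}0$ (respectively $uf_{1}v-f_{2}\to_{R}^{*}0$). By the previous paragraph the left-hand side lies in $\K\left\langle Y^{*}\right\rangle$, so the closure property of the first paragraph upgrades this reduction to one over $R(Y)$, namely $uf_{1}-f_{2}v\to_{R(Y)}^{*}0$ (respectively $uf_{1}v-f_{2}\to_{R(Y)}^{*}0$). Hence every critical pair of $R(Y)$ is reducible, i.e.\ $R(Y)$ is complete.

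The one point that needs care is the identification $(m,p)\in R(Y)$ in the first paragraph: it rests on the closure of $Y^{*}$ under sub-words together with the hypothesis $p\in\K\left\langle Y^{*}\right\rangle$, which is exactly what forbids a reduction of an element of $\K\left\langle Y^{*}\right\rangle$ from ever re-introducing a letter of $X\setminus Y$. The remaining bookkeeping — that the context words of a genuinely overlapping (or nested) critical pair are sub-words of one of the $m_{i}$ — is routine.
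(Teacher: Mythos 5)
Your proposal is correct and follows essentially the same route as the paper: first establish that a reduction step applied to an element of $\K\left\langle Y^{*}\right\rangle$ necessarily uses a rule of $R(Y)$ and stays inside $\K\left\langle Y^{*}\right\rangle$, then apply this to the critical-pair polynomials of $R(Y)$, whose reducibility over $R$ thereby upgrades to reducibility over $R(Y)$. Your extra care in checking that the context words $u$, $v$ lie in $Y^{*}$ (as sub-words of $m_{1}$ or $m_{2}$) makes explicit a point the paper passes over silently, but the argument is the same.
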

\begin{proof}
 Suppose $f\in \K\left\langle Y^*\right\rangle$ and $f\to_{R} g$
 then $f\to_{(m,p)} g$ for some $(m,p)\in R$. Since $m\preceq m'$ for
 some $m'\in \supp(f)$ and $m'\in Y^*$ we get that $(m,p)\in
 R(Y)$. By assumption of the proposition we get $p\in \K\left\langle
 Y^*\right\rangle$. Therefore $g\in \K\left\langle Y^*\right\rangle$
 and $f\to_{R(Y)} g$. Now by repetition we get that
 $f\in\K\left\langle Y^*\right\rangle$ and $f\to_{R}^* g$ implies that
 $f\to_{R(Y)}^* g$. 

 Suppose that $(w,r_1,r_2)$ is an overlap of two rules from
 $R(Y)$ and $u$, $v\in Y^*$ are such that $w=m_1v=um_2$
 ($w=um_1v=m_2$). Then $p_1v-up_2\in \K\left\langle Y^*\right\rangle$
 ($up_1v-p_2\in \K\left\langle Y^*\right\rangle$) and $p_1v-up_2\to_R
 0$ ($up_1v-p_2\to_R 0$), since $R$ is complete. But then
 $p_1v-up_2\to_{R(Y)}
 0$ ($up_1v-p_2\to_{R(Y)} 0$), which shows that $R(Y)$ is complete.
\end{proof}

\section{Konstant form of universal enveloping algebra}
\label{kostant}
Denote by $sl_3^+$ the Lie algebra of upper triangular  nilpotent $3\times 3$
matrices.
Let $\U_3^+(\C)$ be its universal enveloping algebra over $\C$. 
 We shall consider $sl^{+}_3$ with the standard basis 
 \begin{align*} 
  e_{\alpha}& = 
   \left(
    \begin{smallmatrix}
    	 0 & 1 & 0 \\
 	 0 & 0 & 0 \\
 	 0 & 0 & 0 
  \end{smallmatrix}
   \right) & 
   e_\beta& = 
   \left( 
   \begin{smallmatrix}
	   0 & 0 & 0\\
	   0 & 1 & 0 \\
	   0 & 0 & 0 
   \end{smallmatrix}
   \right)& 
   e_{\alpha+ \beta}& = 
   \left( 
   \begin{smallmatrix}
	   0 & 0 & 1 \\
	   0 & 0 & 0 \\
	   0 & 0 & 0 
   \end{smallmatrix}
   \right). 
   \end{align*}
   They also generate $\U_3^+\left( \C \right)$ as an associative algebra. 
It follows from the Poincare-Birkhoff-Witt Theorem, that 
the set 
$$
\mathbb{B} =\left\{e_{\alpha}^{k_\alpha} e_{\alpha+\beta}^{k_{\alpha+\beta}}
e_{\beta}^{k_\beta}\middle| k_{\alpha}, k_{\alpha+\beta}, k_{\beta}\in \N
\right\}
$$
is a $\C$-basis of $\U_3^+(\C)$.
For $\omega\in \left\{ \alpha, \alpha+\beta, \beta \right\}$,  denote by $e_{\omega}^{(k)}$
the element $\frac{1}{k!}e_{\omega}^{k}$ of the algebra $\U_n(\C)$.
We define $\U_3^+(\Z)$ to be the $\Z$-sublattice of $\U_3^+(\C)$ generated by
the set 
$$
\overline{\mathbb{B}} =\left\{ e_{\alpha}^{\left(k_\alpha\right)}
e_{\alpha+\beta}^{\left(k_{\alpha+\beta}\right)}
e_{\beta}^{\left(k_\beta\right)}\middle| k_{\alpha}, k_{\alpha+\beta}, k_{\beta}\in \N
\right\}.
$$
\begin{proposition}
 The set $\U_3^+(\Z)$ is a subring of $\U_3^+(\C)$. In other words,
 $\U_3^+(\Z)$  is a $\Z$-algebra. It is called the \emph{Kostant form}
 of the universal enveloping algebra $\U_3^+(\C)$ over $\Z$. 
\end{proposition}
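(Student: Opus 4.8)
The plan is to show that the $\Z$-lattice $\U_3^+(\Z)$ spanned by $\overline{\mathbb{B}}$ is closed under multiplication. Since $\overline{\mathbb{B}}$ is a $\Z$-basis of the lattice, it suffices to check that the product of any two basis elements lies in the lattice, and since every basis element is a product of divided powers $e_\omega^{(k)}$ with $\omega\in\{\alpha,\alpha+\beta,\beta\}$ in the fixed order $(\alpha,\alpha+\beta,\beta)$, it is enough to show that an arbitrary product of divided powers of the $e_\omega$, taken in any order, can be rewritten as a $\Z$-linear combination of elements of $\overline{\mathbb{B}}$. Because multiplication is associative, this reduces further to establishing a small number of \emph{straightening relations}: for each pair $\omega,\omega'$ that is ``out of order'' we must express $e_{\omega'}^{(l)} e_{\omega}^{(k)}$ as a $\Z$-combination of ordered monomials, and we must also handle the ``same letter'' case $e_\omega^{(k)} e_\omega^{(l)} = \binom{k+l}{k} e_\omega^{(k+l)}$, whose coefficient is an integer.

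The key computations are the commutation relations in $\U_3^+(\C)$. First, $[e_\alpha,e_\beta]=e_{\alpha+\beta}$, while $e_{\alpha+\beta}$ is central in $sl_3^+$ (it commutes with both $e_\alpha$ and $e_\beta$). From $[e_\alpha,e_\beta]=e_{\alpha+\beta}$ one derives, for the divided powers, a relation of the shape
\begin{equation*}
e_\beta^{(l)} e_\alpha^{(k)} = \sum_{j\ge 0} e_\alpha^{(k-j)} e_{\alpha+\beta}^{(j)} e_\beta^{(l-j)},
\end{equation*}
which one proves by induction on $k$ and $l$ using the two basic identities $e_\alpha^{(k)} = e_\alpha e_\alpha^{(k-1)}/k$-type recursions together with $e_\alpha e_\beta^{(l)} = e_\beta^{(l)} e_\alpha + e_\beta^{(l-1)} e_{\alpha+\beta}$ (and its iterate), plus centrality of $e_{\alpha+\beta}$. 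Crucially every coefficient appearing is $1$ (or in intermediate steps a product of binomial coefficients), hence an integer; so the right-hand side lies in $\U_3^+(\Z)$. The cases $e_{\alpha+\beta}^{(l)} e_\alpha^{(k)}$ and $e_\beta^{(l)} e_{\alpha+\beta}^{(k)}$ are immediate from centrality of $e_{\alpha+\beta}$: the factors simply commute, so the product is already (up to reordering into the fixed order $\alpha,\alpha+\beta,\beta$) a single element of $\overline{\mathbb{B}}$.

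With these straightening relations in hand, I would argue as follows: given a product $b\,b'$ of two elements of $\overline{\mathbb{B}}$, write it out as a word in the divided powers and repeatedly apply the straightening relations to move every $e_\alpha^{(\cdot)}$ to the left of every $e_{\alpha+\beta}^{(\cdot)}$ and every $e_\beta^{(\cdot)}$, and every $e_{\alpha+\beta}^{(\cdot)}$ to the left of every $e_\beta^{(\cdot)}$, merging equal adjacent letters via the binomial identity $e_\omega^{(k)} e_\omega^{(l)}=\binom{k+l}{k}e_\omega^{(k+l)}$. Each rewriting step replaces a subword by a $\Z$-combination of words, so the result is a $\Z$-combination of ordered monomials, i.e.\ an element of $\U_3^+(\Z)$; termination of this rewriting can be seen from a suitable monomial order (total degree is preserved and the number of out-of-order pairs strictly decreases, or one invokes the PBW basis to know the process must terminate with ordered monomials). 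The main obstacle — and really the only substantive point — is proving the divided-power straightening identity for $e_\beta^{(l)}e_\alpha^{(k)}$ with \emph{integer} coefficients; once that is done, closure under multiplication, and hence the proposition, follows formally. I would carry out that induction carefully, being explicit that the binomial coefficients $\binom{k+l}{k}$ and the combinatorial factors from iterating $[e_\alpha,e_\beta]=e_{\alpha+\beta}$ are integers, since this integrality is exactly the content that distinguishes the Kostant form from the naive $\C$-enveloping algebra.
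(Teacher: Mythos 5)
Your strategy is sound and is, in substance, the standard direct proof of Kostant's lemma; note that the paper itself does not prove this proposition at all but simply cites \cite{kostant}, so your route is genuinely different from (and more self-contained than) what is on the page. It is, however, essentially the computation the paper performs later in the proof of Theorem~\ref{big}: the verification of the rewriting system there hinges on exactly the straightening identity you single out, namely
\[
e_{\beta}^{(k)} e_{\alpha}^{(l)} \;=\; \sum_{j=0}^{\min(k,l)} (-1)^{j}\, e_{\alpha}^{(l-j)}\, e_{\alpha+\beta}^{(j)}\, e_{\beta}^{(k-j)},
\]
proved by induction on $\min(k,l)$ inside $\U_3^+(\C)$ and then observed to have integer coefficients. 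Two remarks. First, the ``shape'' you display omits the alternating sign $(-1)^j$: since $e_{\alpha+\beta}=[e_\alpha,e_\beta]$, already $e_\beta e_\alpha = e_\alpha e_\beta - e_{\alpha+\beta}$, so the coefficients are $\pm 1$ rather than $1$; this does not affect integrality, which is the point you correctly identify as the crux, but the identity should be stated correctly before you induct on it. Second, you defer the induction itself (``I would carry out that induction carefully''); that induction is the only non-formal content of the proposition --- the chain of reductions (products of basis elements, straightening, centrality of $e_{\alpha+\beta}$, the integer binomial merge, termination via PBW or a count of out-of-order pairs) is all routine once the identity with integer coefficients is in hand --- so a complete write-up must actually contain it. What your approach buys over the paper's citation is self-containedness for the case $n=3$ (and it foreshadows Theorem~\ref{big}); what the citation buys is brevity and the fact that Kostant's result covers $sl_n^+$ for all $n$.
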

\begin{proof}
 For a proof see~\cite[Lemma~2 after Proposition~3]{kostant}
 and~\cite[Remark~3]{kostant} thereafter. 
\end{proof} 

\begin{definition}
 For any field $\K$, the algebra $\U_3^+(\K) := \K\otimes_{\Z} \U_3^+(\Z)$ is called
 \emph{Kostant form} of the algebra $\U_3^+(\C)$ over 
 $\K$.
\end{definition}
Let $S$ be a free monoid generated by $\alpha$ and $\beta$. 
We define $S$-grading on $sl_3^+\left( \C \right)$ by 
\begin{align*}
	\deg\left( e_{\alpha} \right) &: = \alpha & \deg\left( e_{\alpha+\beta}
	\right)& := \alpha + \beta & \deg\left( e_{\beta} \right) & := \beta. 
\end{align*}

 This grading extends to the grading of $\U_3^+(\C)$
 by
 $$
 \deg\left(  e_{\alpha}^{k_\alpha} e_{\alpha+\beta}^{k_{\alpha+\beta}}
 e_{\beta}^{k_\beta}\right): = \left(k_\alpha + k_{\alpha+\beta}\right)\alpha +
 \left(k_{\alpha+\beta} + k_{\beta}\right)\beta. 
 $$
 It induces a grading on the algebras
  $\U_3^+\left( \Z \right)$ and $\U_3^+\left( \K \right)$, for an arbitrary
 field $\K$, such that 
$$
\deg\left( e_{\alpha}^{\left(k_\alpha\right)}
e_{\alpha+\beta}^{\left(k_{\alpha+\beta}\right)}
e_{\beta}^{\left(k_\beta\right)} \right) := 
 \left(k_\alpha + k_{\alpha+\beta}\right)\alpha +
 \left(k_{\alpha+\beta} + k_{\beta}\right)\beta. 
$$
We also define the norm $\norm$ on $S$ by 
$$
\norm\left( k_\alpha\alpha + k_\beta\beta \right) := k_\alpha+k_\beta 
$$
 and will denote the composition of $\deg$ with $\norm$ by $\Deg$.

\section{Big Gr\"obner basis}
\label{bigbasis}
In this section we describe a 
Gr\"obner basis of the algebra $\U_3^+(\K)$ with respect to the generating set
$X =\left\{\,  e_{\alpha}^{\left(k_\alpha\right)}, 
e_{\alpha+\beta}^{\left(k_{\alpha+\beta}\right)}, 
e_{\beta}^{\left(k_\beta\right)}\,\middle|\, k_{\alpha}, k_{\alpha+\beta}, k_{\beta}\in \N \right\}$. 
Let $Y=\left\{ e_\alpha,e_{\alpha+\beta}, e_\beta \right\}$. We order $Y^*$ by
Deg-lexicographical ordering induced by the ordering
$$
e_{\alpha}<e_{\alpha+\beta}<e_\beta
$$
on $Y$. We have the map $\phi\colon X^*\to Y^*$ of free monoids induced by 
\begin{align*}
	e_{\alpha}^{\left( k \right)}& \mapsto e_{\alpha}^k & 
	e_{\alpha+\beta}^{\left( k \right)}& \mapsto e_{\alpha+\beta}^k & 
	e_{\beta}^{\left( k \right)}& \mapsto e_{\beta}^k. 
\end{align*}
We define the ordering $\ll$ on $X^*$ as follows.  If
$\phi\left( u \right) < \phi\left( v \right)$, then $u\ll v$. If $\phi\left( u
\right) = \phi\left( v \right)$ and the length of $u$ is less then the length of
$v$, then $u\ll v$. 
If $\phi\left( u \right) = \phi\left( v \right)$ and both words $u$ and
$v\in X^*$
have the same length, then we compare them lexicographically with respect to the
ordering
$$
e_{\alpha}< e_{\alpha+\beta}<e_{\beta}< e_{\alpha}^{\left( 2 \right)} <
e_{\alpha+\beta}^{\left( 2 \right)} < e_{\beta}^{\left( 2 \right)}< \dots <
e_{\alpha}^{\left( k \right)}< e_{\alpha+\beta}^{\left( k \right)}<
e_{\beta}^{\left( k \right)}<\dots
$$
on $X$. 
Since $\Deg$-lexicographical ordering on $Y^*$ is terminating and every fiber of
$\phi$ is finite, it follows that also the ordering $\ll$
on $X^*$ is terminating. It is also easy to see that $\ll$ is monomial. In fact,
let $u$, $v$, $w\in X^*$. Then $\phi\left( u \right)<\phi\left( v \right)$
implies $\phi\left( uv \right)<\phi\left( vw \right)$; 
if $\phi\left( u \right) = \phi\left( v \right)$ and the length of $u$ is less
then the length of $v$, then $\phi\left( uw \right) = \phi\left( vw
\right)$ and the length of $uw$ is less then the length of $vw$; if $\phi\left(
u \right) = \phi\left( v \right)$, $u$ and $v$ have the same length, and
$u<v$ with respect to the lexicographical ordering, then $\phi\left( uw
\right) = \phi\left( vw \right)$, $uw$ and $vw$ have the same length, and
$uw$ is less then $vw$ with respect to the lexicographical ordering. Thus
$u\ll v$ implies $uw\ll vw$. The stability with respect to the left
multiplication is verified analogous.

\begin{theorem}
Let $X$ and the ordering on $X$ be as above. Then the following set of rewriting rules is complete:
\begin{align}
	\label{alpha}
	e_{\alpha}^{\left( k \right)} e_{\alpha}^{\left( l \right)} & \to {k+l \choose k} e_{\alpha}^{\left( k+l \right)}\\
	\label{alphabeta}
	e_{\alpha+\beta}^{\left( k \right)} e_{\alpha+\beta}^{\left( l \right)} & \to {k+l \choose k} e_{\alpha+\beta}^{\left( k+l \right)}\\
	\label{beta}
	e_{\beta}^{\left( k \right)} e_{\beta}^{\left( l \right)} & \to {k+l \choose k} e_{\beta}^{\left( k+l \right)}\\
	\label{alphabetaalpha}
	e_{\alpha+\beta}^{\left( k \right)} e_{\alpha}^{\left( l \right)} &
	\mapsto e_{\alpha}^{\left( l \right)} e_{\alpha+\beta}^{\left( k
	\right)}\\
	\label{betaalpha}
	e_{\beta}^{\left( k \right)} e_{\alpha}^{\left( l \right)} & \mapsto
	\sum_{j=0}^{min\left( k,l \right)} \left( -1 \right)^{j}
	e_{\alpha}^{\left( l-j \right)} e_{\alpha+\beta}^{\left( j
	\right)} e_{\beta}^{\left( k-j \right)}\\
	\label{betaalphabeta}
	e_{\beta}^{\left( k \right)} e_{\alpha+\beta}^{\left( l \right)} &
	\mapsto e_{\alpha+\beta}^{\left( l \right)} e_{\beta}^{ \left(
	k \right)
	},
\end{align} 
where $k$, $l\in  \N$. 
	\label{big}
\end{theorem}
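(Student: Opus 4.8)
The plan is to verify completeness of the rewriting system by checking that every critical pair is reducible, using Theorem~\ref{groebner} in the backward direction is not enough — we must directly exhibit the reductions. First I would set up bookkeeping. The left-hand sides of the six rule families are the words $e_\alpha^{(k)}e_\alpha^{(l)}$, $e_{\alpha+\beta}^{(k)}e_{\alpha+\beta}^{(l)}$, $e_\beta^{(k)}e_\beta^{(l)}$, $e_{\alpha+\beta}^{(k)}e_\alpha^{(l)}$, $e_\beta^{(k)}e_\alpha^{(l)}$, $e_\beta^{(k)}e_{\alpha+\beta}^{(l)}$; each has length $2$ in $X^*$, so the only possible overlaps are of the form $w = x^{(i)}y^{(j)}z^{(m)}$ where the prefix $x^{(i)}y^{(j)}$ matches one left-hand side and the suffix $y^{(j)}z^{(m)}$ matches another. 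Enumerating the compatible middle letters, the critical pairs come from the words $e_\alpha^{(i)}e_\alpha^{(j)}e_\alpha^{(m)}$, $e_{\alpha+\beta}^{(i)}e_{\alpha+\beta}^{(j)}e_{\alpha+\beta}^{(m)}$, $e_\beta^{(i)}e_\beta^{(j)}e_\beta^{(m)}$, $e_{\alpha+\beta}^{(i)}e_{\alpha+\beta}^{(j)}e_\alpha^{(m)}$, $e_{\alpha+\beta}^{(i)}e_\alpha^{(j)}e_\alpha^{(m)}$, $e_\beta^{(i)}e_\beta^{(j)}e_\alpha^{(m)}$, $e_\beta^{(i)}e_\alpha^{(j)}e_\alpha^{(m)}$, $e_\beta^{(i)}e_{\alpha+\beta}^{(j)}e_\alpha^{(m)}$, $e_\beta^{(i)}e_\beta^{(j)}e_{\alpha+\beta}^{(m)}$, $e_\beta^{(i)}e_{\alpha+\beta}^{(j)}e_{\alpha+\beta}^{(m)}$. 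I would organize the proof as a case analysis over this finite list of overlap types (each depending on three parameters $i,j,m\in\N$).

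The cases split into three difficulty tiers. The three "pure" overlaps $x^{(i)}x^{(j)}x^{(m)}$ reduce to the classical identity $\binom{i+j}{i}\binom{i+j+m}{i+j} = \binom{j+m}{j}\binom{i+j+m}{i}$, which expresses associativity of the divided-power multiplication; both orders of reduction give $\binom{i+j+m}{i\;j\;m}\,x^{(i+j+m)}$. The overlaps involving a single commutation rule \eqref{alphabetaalpha} or \eqref{betaalphabeta} against a merging rule are straightforward: one checks that pushing $e_\alpha$ past two merged $e_{\alpha+\beta}$'s agrees with merging after two separate pushes, and similarly for $e_\beta$ past $e_{\alpha+\beta}$, and for $e_\beta$ past $e_{\alpha+\beta}$ then $e_\alpha$ — here rules \eqref{alphabetaalpha}, \eqref{betaalphabeta} commute with each other because $e_\alpha$ and $e_\beta$ commute modulo the correction terms, but in the relevant overlap $e_\beta^{(i)}e_{\alpha+\beta}^{(j)}e_\alpha^{(m)}$ the two ways of resolving (first $e_\beta e_{\alpha+\beta}$, or first $e_{\alpha+\beta}e_\alpha$) lead to expressions that must be further normalized via \eqref{betaalpha}; this is where genuine computation starts.

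The main obstacle is the family of overlaps in which the nontrivial commutation rule \eqref{betaalpha} appears twice, namely $e_\beta^{(i)}e_\beta^{(j)}e_\alpha^{(m)}$ and $e_\beta^{(i)}e_\alpha^{(j)}e_\alpha^{(m)}$. For $e_\beta^{(i)}e_\beta^{(j)}e_\alpha^{(m)}$ one path merges the $e_\beta$'s first (via \eqref{beta}) and then applies \eqref{betaalpha} once, while the other applies \eqref{betaalpha} twice (once to $e_\beta^{(j)}e_\alpha^{(m)}$, producing a sum, then to each resulting $e_\beta^{(i)}e_\alpha^{(m-s)}$ term), after which one must re-collect using \eqref{alpha}, \eqref{alphabeta}, \eqref{beta} and \eqref{betaalphabeta}; matching the two answers amounts to a binomial-coefficient identity with alternating signs, essentially a Vandermonde-type convolution. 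The symmetric difficulty arises for $e_\beta^{(i)}e_\alpha^{(j)}e_\alpha^{(m)}$ where one first merges the $e_\alpha$'s. I expect each of these to reduce, after expanding both sides in the PBW-type basis $\{e_\alpha^{(a)}e_{\alpha+\beta}^{(b)}e_\beta^{(c)}\}$, to a verifiable identity of the form $\sum_j (-1)^j \binom{\cdot}{\cdot}\binom{\cdot}{\cdot} = \binom{\cdot}{\cdot}$; I would prove these by a generating-function argument (interpreting the divided powers as $e_\omega^{(k)}\leftrightarrow t^k/k!$ and using that the defining relations hold in $\U_3^+(\C)$, where they follow from the exponential formula $\exp(tE)$ applied to the matrix identities among $e_\alpha,e_\beta,e_{\alpha+\beta}$). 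In fact the cleanest route to the whole theorem: every rule holds in $\U_3^+(\Z)$ (so all critical-pair differences lie in $\ker\pi$), and the normal forms on both sides are already $\Z$-linear combinations of basis elements of $\U_3^+(\Z)$, hence equal iff they are equal in $\U_3^+(\Z)$ — which can be checked once and for all in $\U_3^+(\C)$ via the matrix exponential, sidestepping the raw binomial manipulations.
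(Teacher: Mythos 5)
Your closing ``cleanest route'' paragraph is, in essence, the paper's actual proof, and it is the part of your proposal you should promote to the main argument. The paper never enumerates critical pairs: it observes that the words left non-reducible by the six rule families are exactly the PBW monomials $e_{\alpha}^{(k_\alpha)}e_{\alpha+\beta}^{(k_{\alpha+\beta})}e_{\beta}^{(k_\beta)}$, whose images form a basis of $\U_3^+(\K)$; once each rule is known to be a relation in the algebra, the system is a Gr\"obner basis in the sense of Definition~1 and completeness follows from Theorem~\ref{groebner}. Equivalently, in your language, every critical-pair difference lies in $\ker\pi$, its normal form is a linear combination of non-reducible words, and those words map to linearly independent elements, so the normal form is zero. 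The ten-case overlap analysis occupying most of your proposal (your enumeration of the ten overlap types is correct, incidentally) is therefore unnecessary, and as written it is also incomplete: you never actually verify the alternating binomial identities you predict, so on its own that route would not constitute a proof. The one point of genuine divergence is the verification that \eqref{betaalpha} holds in the algebra --- the only rule for which this is not immediate: the paper does it by an explicit double induction on $\min(k,l)$ with divided-power manipulations, whereas you propose the identity $\exp(te_\beta)\exp(se_\alpha)=\exp(se_\alpha)\exp(-ts\,e_{\alpha+\beta})\exp(te_\beta)$ in a completion of $\U_3^+(\C)$ and comparison of the coefficients of $s^l t^k$; this is shorter and equally rigorous, since the resulting divided-power identity has coefficients $\pm 1$ and hence lives in $\U_3^+(\Z)$ and descends to $\U_3^+(\K)$. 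One small correction to your wording: the linear independence needed at the last step is of the images of the non-reducible words in $\U_3^+(\K)$, not merely in $\U_3^+(\Z)$; this holds because $\overline{\mathbb{B}}$ is a $\Z$-basis of $\U_3^+(\Z)$ and $\U_3^+(\K)=\K\otimes_{\Z}\U_3^+(\Z)$.
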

\begin{proof}
	It is clear that the set 
	$$
	B = \left\{ e_{\alpha}^{\left(k_\alpha\right)}
e_{\alpha+\beta}^{\left(k_{\alpha+\beta}\right)}
e_{\beta}^{\left(k_\beta\right)}\middle| k_{\alpha}, k_{\alpha+\beta}, k_{\beta}\in \N
\right\}
	$$
	is the set of non-reducible words with respect to the given rewriting
	system. By definition, the natural image of $B$ in
	$\U_3^+(\K)$ is a basis of $U_3^+(\K)$. Therefore, it is enough
	to check that for every rule the left hand side and the right hand side
	are equal in $\U_3^+(\K)$. 
	This is obvious for \eqref{alpha}, \eqref{alphabeta}, \eqref{beta},
	\eqref{alphabetaalpha}, \eqref{betaalphabeta}. Thus we have only to
	check the claim for \eqref{betaalpha}. 
	We have to prove the equality
	$$
e_{\beta}^{\left( k \right)} e_{\alpha}^{\left( l \right)} =  
	\sum_{j=0}^{min\left( k,l \right)} \left( -1 \right)^{j}
	e_{\alpha}^{\left( l-j \right)} e_{\alpha+\beta}^{\left( j
	\right)} e_{\beta}^{\left( k-j \right)} 
		$$
	in $\U_3^+(\K)$. Clearly it is enough to prove the same equality in
	$\U_3^+(\Z)$ and,  therefore in $\U_3^+(\C)$. We will do this by
	induction on the minimum of  $k$ and $l$. The case $min(k,l)=1$ splits
	into two cases $k=1$ and $l=1$. The case $k=1$, we prove by induction on
	$l$. For $k=l=1$ we have
	$$
	e_{\beta} e_{\alpha}= e_{\alpha}e_{\beta} - e_{\alpha+ \beta}.
	$$
	Suppose we have proved equality for $k=1$ and $l\le l_0$. Let us check
	it for $l=l_0+1$. 
	\begin{align*}
		e_{\beta}e_{\alpha}^{(l)} &=\frac{1}{l} 
		e_{\beta}e_{\alpha}^{(l-1)}e_{\alpha} \mbox{\ \ \ \ \ \ \ \ \ \     induction assumption} \\
		&= \frac{1}{l} \left(e_{\alpha}^{(l-1)}e_{\beta}-
		e_{\alpha}^{(l-2)}e_{\alpha+\beta}\right)e_{\alpha} \\
		&= \frac{1}{l}\left( e_{\alpha}^{(l-1)}e_{\alpha}e_{\beta} -
		e_{\alpha}^{(l-1)} e_{\alpha+\beta} -
		e_{\alpha}^{(l-2)}e_{\alpha}e_{\alpha+\beta}\right) \\
		&= e_{\alpha}^{(l)} e_{\beta} - \frac{1}{l}\left( 1 + l-1)
		\right)e_{\alpha}^{(r-1)} e_{\alpha+\beta} \\
&= e_{\alpha}^{(l)} e_{\beta} - e_{\alpha}^{(l-1)} e_{\alpha+\beta}. 
	\end{align*}
	Now we prove the equality in the case $l=1$ and $k\ge 2$. Suppose it is
	proved for all $k\le k_0$. Let us show it for $k=k_0+1$. We have
	\begin{align*}
		e_{\beta}^{(k)} e_{\alpha} & =
		\frac{1}{k}e_{\beta}e_{\beta}^{(k-1)}e_{\alpha} \\
		&= \frac{1}{k} \left( e_{\beta} e_{\alpha} e_{\beta}^{\left( k-1
		\right)}- e_{\beta} e_{\alpha+\beta}e_{\beta}^{(k-2)}  \right)\\
		&= \frac{1}{k} \left( e_{\alpha}e_{\beta}e_{\beta}^{(k-1)}
		-e_{\alpha+\beta}e_{\beta}^{(k-1)} - e_{\alpha+\beta}e_{\beta}e_{\beta}^{(k-2)}  \right)\\
		&= e_{\alpha}e_{\beta}^{(k)} - e_{\alpha+\beta} e_{\beta}^{(k-1)}.
	\end{align*}
Suppose we have prove equality for all $k$ and $l$ such that $min(k,r)\le m_0$.
Let us prove it for $min(k,r)= m_0+1$. There are two cases $k=m_0+1$ and
$l=m_0+1$. As the computations are very similar we will treat only the first
case. 
\begin{align*}
	e_{\beta}^{(k)}e_{\alpha}^{(l)}&=\frac{1}{k}e_{\beta}e_{\beta}^{(k-1)}
	e_{\alpha}^{(l)}\\
	&= \frac{1}{k}\sum_{s=0}^{k-1}\left( -1 \right)^s e_{\beta}e_{\alpha}^{(l-s)}
	e_{\alpha+\beta}^{(s)}e_{\beta}^{k-1-s} \\
	&= \frac{1}{k} \sum_{s=0}^{k-1}\left( -1 \right)^s \left( e_{\alpha}^{(l-s)} e_{\alpha+\beta}^{(s)} e_{\beta}
	e_{\beta}^{(k-1-s)} - e_{\alpha}^{(l-s-1)} e_{\alpha+\beta}e_{\alpha+\beta}^{(s)}
	e_{\beta}^{(k-1-s)}\right) \\
	&= \frac{1}{k} \sum_{s=0}^k  \left( (-1 \right)^s (k-s) -\left( -1
	\right)^{s-1} s) e_{\alpha}^{(l-s)} e_{\alpha+\beta}^{(s)}
	e_{\beta}^{(k)}= \sum_{s=0}^k\left( -1 \right)^s e_{\alpha}^{(l-s)} e_{\alpha+\beta}^{(s)}
	e_{\beta}^{(k)}.
\end{align*}
\end{proof}

\begin{corollary}
	\label{subalgebra}
	Let $p$ be a characteristic of the field $\K$ and $m\ge 0$. Then the
	linear span $\U_3^{m}(\K)$ of the set
	$$
	B'= \left\{\, e_{\alpha}^{\left( k_\alpha \right)}
	e_{\alpha+\beta}^{\left( k_{\alpha+\beta} \right)}
	e_{\beta}^{\left(k_\beta\right)}
	\,\middle|\,  k_{\alpha}, k_{\alpha+\beta}, k_{\beta}\le p^m-1 \right\}
	$$
	is a subalgebra of $\U_3^+(\K)$
\end{corollary}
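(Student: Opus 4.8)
The plan is to deduce the statement from Proposition~\ref{subsystem}. Put
$$
Y=\left\{\, e_{\alpha}^{(k)},\ e_{\alpha+\beta}^{(k)},\ e_{\beta}^{(k)} \,\middle|\, 0\le k\le p^{m}-1 \,\right\}\subseteq X ,
$$
and let $R$ be the complete rewriting system of Theorem~\ref{big}, i.e. the rules \eqref{alpha}--\eqref{betaalphabeta}. Every left-hand side of a rule of $R$ has one of the shapes $e_{\omega}^{(k)}e_{\omega}^{(l)}$, $e_{\alpha+\beta}^{(k)}e_{\alpha}^{(l)}$, $e_{\beta}^{(k)}e_{\alpha}^{(l)}$, $e_{\beta}^{(k)}e_{\alpha+\beta}^{(l)}$, and such a word lies in $Y^{*}$ exactly when $k,l\le p^{m}-1$; hence $R(Y)$ is exactly the set of instances of \eqref{alpha}--\eqref{betaalphabeta} with $k,l\le p^{m}-1$. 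If a word of $Y^{*}$ contains the left-hand side of some rule of $R$ as a factor, that factor is again a word of $Y^{*}$, hence the left-hand side of a rule of $R(Y)$; so a word of $Y^{*}$ is $R$-reducible iff it is $R(Y)$-reducible, and the non-reducible words of $Y^{*}$ with respect to $R(Y)$ are precisely the words of $B'$ (that is, of $B\cap Y^{*}$).

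Next I would verify the hypothesis of Proposition~\ref{subsystem}: the right-hand side of each rule of $R(Y)$ lies in $\K\langle Y^{*}\rangle$. For the rules \eqref{alphabetaalpha}, \eqref{betaalpha}, \eqref{betaalphabeta} this is immediate, since every divided power occurring on the right carries an index bounded by $\max(k,l)\le p^{m}-1$. The only point that needs an argument is the family \eqref{alpha}, \eqref{alphabeta}, \eqref{beta}, whose right-hand side is $\binom{k+l}{k}e_{\omega}^{(k+l)}$ with $k,l\le p^{m}-1$. If $k+l\le p^{m}-1$ then $e_{\omega}^{(k+l)}\in Y$ and there is nothing to prove. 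If $k+l\ge p^{m}$, write $k$ and $l$ in base $p$; each has at most $m$ digits, and a carry-free addition would force $k+l\le\sum_{i=0}^{m-1}(p-1)p^{i}=p^{m}-1$, a contradiction, so adding $k$ and $l$ produces at least one carry. By Kummer's theorem $p\mid\binom{k+l}{k}$, hence $\binom{k+l}{k}=0$ in $\K$ and the right-hand side is $0\in\K\langle Y^{*}\rangle$. This binomial vanishing is really the only substantial step of the proof.

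Granting this, Proposition~\ref{subsystem} shows that $R(Y)$ is a complete rewriting system in the variables $Y$; since the ordering $\ll$ restricted to $Y^{*}$ is still terminating, every $f\in\K\langle Y^{*}\rangle$ has a (unique) normal form $NF(f,R(Y))$, which lies in $\K\langle Y^{*}\rangle$ and is a $\K$-linear combination of non-reducible words of $Y^{*}$, that is, of words of $B'$. Because all rules of $R$, and hence of $R(Y)$, hold as identities in $\U_{3}^{+}(\K)$, the canonical projection $\pi\colon\K\langle X^{*}\rangle\to\U_{3}^{+}(\K)$ is constant along $\to_{R(Y)}$, so $\pi(f)=\pi(NF(f,R(Y)))$ for every $f\in\K\langle Y^{*}\rangle$. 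Applying this to the product $uv$ of two words $u,v\in B'\subseteq Y^{*}$, we get that $\pi(u)\pi(v)=\pi(uv)$ is a $\K$-linear combination of the elements $\pi(w)$ with $w\in B'$, and so it belongs to $\U_{3}^{m}(\K)$. Since $\U_{3}^{m}(\K)$ is by definition the $\K$-span of these elements and contains $1=\pi(e)$, it is a subalgebra of $\U_{3}^{+}(\K)$, as claimed.
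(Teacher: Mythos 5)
Your proof is correct and follows essentially the same route as the paper: both reduce the statement to Proposition~\ref{subsystem} applied to the restriction of the rewriting system of Theorem~\ref{big} to the generators with divided powers at most $p^m-1$, the only substantive point being the vanishing of $\binom{k+l}{k}$ in $\K$ when $k,l\le p^m-1$ and $k+l\ge p^m$. The paper establishes that vanishing by a direct computation with Legendre's formula for the $p$-adic valuation of factorials, whereas you invoke Kummer's carry criterion; these are equivalent, so the difference is cosmetic.
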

\begin{proof}
We claim that $\U_3^m(\K)$ is the subalgebra $A$  of $\U_3^+(\K)$ generated
by the set 
$$
X'= \left\{\, e_{\alpha}^{(k)},e_{\alpha+\beta}^{(k)},e_{\beta}^{(k)}   \,\middle|\,  k\le p^m-1 \right\}. 
$$
It is enough to show that the set $B'$ is a basis of $A$.  
Let $R$ be rewriting system defined in Theorem~\ref{big}. We claim
that $R(X')$ is complete. To prove this we apply
Proposition~\ref{subsystem}. It is obvious for the rules
\eqref{alphabetaalpha}, \eqref{betaalpha}, and \eqref{betaalphabeta}, that if the left hand side
is an element of $\left( X' \right)^*$, then all the monomials on the
right hand side are also elements of $\left( X' \right)^*$. Moreover,
if $k+l\le p^m-1$ then the same is true for the rewriting rules \eqref{alpha},
\eqref{alphabeta}, and \eqref{beta}.  
 Suppose $k$, $l\le p^m-1$ and $k+l\ge p^m$. Then
${k+l \choose k}=0$ in $\K$. In fact,  the degree of
$p$ in the prime decomposition of $n!$ is given by the formula
$$
\sum_{j=0}^{\infty}\left[ \frac{n}{p^j} \right].
$$
Therefore, the degree of $p$ in the prime decomposition of ${k+l
\choose k}$ is 
\begin{align*}
 \sum_{j=0}^{\infty}\left( \left[ \frac{k+l}{p^j} \right]-\left[
 \frac{k}{p^j} 
 \right]-\left[ \frac{l}{p^j} \right] \right) &=
 \left[ \frac{k+l}{p^m} \right] + \sum_{j=0}^{l-1}
\left( \left[ \frac{k+l}{p^j} \right]-\left[
 \frac{k}{p^j} 
 \right]-\left[ \frac{l}{p^j} \right] \right)\\
 &\ge \left[ \frac{k+l}{p^m} \right]=1>0.
\end{align*}
Therefore, for the rules  \eqref{alpha},
\eqref{alphabeta}, \eqref{beta} and $k+l\ge p^m$, we get
\begin{align*}
	e_{\alpha}^{(k)}e_{\alpha}^{(l)}& \to 0&
	e_{\alpha+\beta}^{(k)}e_{\alpha+\beta}^{(l)}& \to 0&
	e_{\beta}^{(k)}e_{\beta}^{(l)}& \to 0.
\end{align*}
	This shows that $R(X')$ is complete. Now, it is obvious that $B'$ is
the set of non-reducible monomials in the alphabet $X'$ with respect to
the rewriting system $R(X')$. This shows that $B'$ is a basis of the
algebra $A'$.
\end{proof}
\section{Small Gr\"obner basis}
\label{smallbasis}
The Gr\"obner basis obtained in the previous section is not convenient
for the construction of minimal projective resolution of $\K$, since
the Anick resolution is much closer to the minimal resolution, if the
chosen generating set is minimal. 

Denote
$e_{\alpha}^{\left( p^k \right)}$ by $a_k$ and $e_{\beta}^{\left( p^k
\right)}$ by $b_k$. 
\begin{theorem}
	\label{genset}
	For any $m\in \N_0$
	the set $Z_m := \left\{\, a_l,b_l \,\middle|\, l\le m-1\right\}$ generates
	the algebra $\U_3^m\left( \K \right)$. And, therefore, the set
	$Z:= \left\{\, a_l, b_l \,\middle|\, l\in \N_0 \right\}$ generates
	$\U_3^+\left( \K \right)$. 
\end{theorem}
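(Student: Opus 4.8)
The plan is to reduce everything to Corollary~\ref{subalgebra}. That corollary (more precisely, its proof) identifies $\U_3^m(\K)$ with the subalgebra $\langle X'\rangle$ generated by $X' = \left\{\, e_\alpha^{(k)}, e_{\alpha+\beta}^{(k)}, e_\beta^{(k)} \,\middle|\, k\le p^m-1 \right\}$. Since $p^l \le p^{m-1}\le p^m-1$ for $l\le m-1$, we have $Z_m\subseteq X'$, so it suffices to prove the reverse inclusion $X'\subseteq \langle Z_m\rangle$, where $\langle Z_m\rangle$ is the subalgebra generated by $Z_m$; then $\langle Z_m\rangle = \langle X'\rangle = \U_3^m(\K)$. (For $m=0$ both algebras are $\K$ and there is nothing to prove, so assume $m\ge 1$.)

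First I would dispatch the divided powers of $e_\alpha$ and $e_\beta$. Fix $1\le k\le p^m-1$ and write its base-$p$ expansion $k=\sum_{i=0}^{m-1}c_ip^i$ with $0\le c_i\le p-1$. Rule~\eqref{alpha} gives $e_\alpha^{(s)}e_\alpha^{(t)} = {s+t\choose s}e_\alpha^{(s+t)}$; combining this with Lucas' theorem on binomial coefficients modulo $p$ one computes $a_i^{c_i} = c_i!\, e_\alpha^{(c_ip^i)}$ (each intermediate coefficient ${(j+1)p^i\choose p^i}$ is $\equiv j+1$) and $\prod_{i=0}^{m-1}e_\alpha^{(c_ip^i)} = e_\alpha^{(k)}$ (each intermediate coefficient is $\equiv 1$, since the summands $c_ip^i$ have pairwise disjoint base-$p$ digits). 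As $c_i<p$, each $c_i!$ is invertible in $\K$, so $e_\alpha^{(k)} = \prod_{i=0}^{m-1}(c_i!)^{-1}a_i^{c_i}\in\langle Z_m\rangle$. Replacing $e_\alpha$ by $e_\beta$, the $a_i$ by the $b_i$, and rule~\eqref{alpha} by rule~\eqref{beta} shows $e_\beta^{(k)}\in\langle Z_m\rangle$ as well.

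Next I would handle $e_{\alpha+\beta}^{(k)}$ by induction on $k$. Specializing rule~\eqref{betaalpha} to $l=k$ and isolating the $j=k$ summand yields
\[(-1)^k e_{\alpha+\beta}^{(k)} = e_\beta^{(k)}e_\alpha^{(k)} - \sum_{j=0}^{k-1}(-1)^j e_\alpha^{(k-j)}e_{\alpha+\beta}^{(j)}e_\beta^{(k-j)};\]
the base case $k=1$ reads $e_{\alpha+\beta} = a_0b_0 - b_0a_0$. For $k\le p^m-1$ every factor on the right already lies in $\langle Z_m\rangle$ — the divided powers of $e_\alpha$ and $e_\beta$ by the previous paragraph, the $e_{\alpha+\beta}^{(j)}$ with $j<k$ by the induction hypothesis — so dividing by the unit $(-1)^k$ puts $e_{\alpha+\beta}^{(k)}$ in $\langle Z_m\rangle$ too. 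This proves $X'\subseteq\langle Z_m\rangle$, hence $\langle Z_m\rangle = \U_3^m(\K)$. The statement for $Z$ is then formal: the basis element $e_\alpha^{(k_\alpha)}e_{\alpha+\beta}^{(k_{\alpha+\beta})}e_\beta^{(k_\beta)}$ belongs to $\U_3^m(\K)$ as soon as $p^m-1\ge\max(k_\alpha,k_{\alpha+\beta},k_\beta)$, so $\U_3^+(\K) = \bigcup_{m\ge 0}\U_3^m(\K)$; since $Z=\bigcup_{m\ge 0}Z_m$, the subalgebra generated by $Z$ contains every $\U_3^m(\K)$, hence all of $\U_3^+(\K)$.

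The only genuine obstacle is the modular bookkeeping in the second paragraph: one must verify, via Lucas' (equivalently Kummer's) theorem, that the binomial coefficients arising when assembling $e_\alpha^{(k)}$ from the $a_i=e_\alpha^{(p^i)}$ are units of $\K$ — precisely the factorials $c_i!$ and the value $1$ — so that the identity does not collapse in characteristic $p$. The two inductions are otherwise routine.
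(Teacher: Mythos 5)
Your proof is correct and follows essentially the same route as the paper: reduce to the generators $e_\omega^{(k)}$, $k\le p^m-1$, handle $e_\alpha^{(k)}$ and $e_\beta^{(k)}$ via base-$p$ digits, Lucas' theorem and invertibility of $c_i!$, then express $e_{\alpha+\beta}$ divided powers from rule~\eqref{betaalpha} with $l=k$. The only (immaterial) difference is that you induct on $k$ directly for $e_{\alpha+\beta}^{(k)}$, whereas the paper first reduces to the $p$-power case $e_{\alpha+\beta}^{(p^l)}$ and recurses on $l$.
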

\begin{proof}
	We know that $U_3^m\left( \K \right)$ is generated by the elements
	$e_{\alpha}^{\left( k \right)}$, $e_{\alpha+\beta}^{\left( k
	\right)}$, $e_{\beta}^{\left( k \right)}$, $k\le p^m-1 $. Thus it is enough
	to show that these elements can be written as linear combination of
	monomials in $a_l$, $b_l$, $l\le m-1$. 

Suppose 
$k=k_lp^{m-1}+k_{l-1}p^{m-2}+\dots + k_0$ with $0\le k_s\le p-1$. Then it follows
from the Lucas' theorem \cite[(137)]{lucas} and \eqref{alpha},
\eqref{alphabeta}, \eqref{beta}, that 
\begin{align*}
	e_{\alpha}^{\left( k \right)}& = \prod_{s=0}^{m-1} e_{\alpha}^{\left( k_sp^s
	\right)}, &
	e_{\alpha+\beta}^{\left( k \right)}& = \prod_{s=0}^{m-1}
	e_{\alpha+\beta}^{\left( k_sp^s \right)}, &
	e_{\beta}^{\left( k \right)}& = \prod_{s=0}^{m-1} e_{\beta}^{\left( k_sp^s
	\right)}.
\end{align*}
Now, for any $0\le k_s\le p-1$ the integer $k_s!$ is invertible in $\K$.
Therefore 
\begin{align*}
	e_{\alpha}^{\left( k_sp^s \right)} & = \frac1{k_s!}\left( e_{\alpha}^{\left( p^s \right)} \right)^{k_s}, &
	e_{\alpha+\beta}^{\left( k_sp^s \right)} & = \frac1{k_s!}\left( e_{\alpha+\beta}^{\left( p^s \right)} \right)^{k_s}, &
	e_{\beta}^{\left( k_sp^s \right)} & = \frac1{k_s!}\left( e_{\beta}^{\left( p^s \right)} \right)^{k_s}. 
\end{align*}
Thus the algebra $\U_3^m\left( \K \right)$ is generated by the elements
$a_l$, $b_l$, and $e_{\alpha+\beta}^{p^l}$, $l\le m-1$. 

From \eqref{betaalpha}, it
follows
that
$$
e_{\alpha+\beta}^{\left( p^l \right)} = \left( -1 \right)^{p^l} \left(
e_{\beta}^{\left( p^l \right)}
 e_{\alpha}^{\left( p^l \right)} - \sum_{j=0}^{p^l-1}\left( -1
\right)^j e_{\alpha}^{\left( p^l-j \right)} e_{\alpha+\beta}^{\left( j
\right)} e_{\beta}^{\left( p^l-j \right)}\right).
$$
From this equality by
recursion on  $l$, it follows that
$e_{\alpha+\beta}^{\left( p^l \right)}$ can be written as a linear combination
of monomials in $a_s$, $b_s$ with $s\le l$. 
\end{proof}
%
%
We will consider $\Deg$-lexicographical order on $Z_m$
that corresponds to the ordering
$$
a_0<b_0<a_1<b_1<\dots<a_m<b_m.
$$
on $Z_m$. To establish the Gr\"obner basis of $\U_3^m\left(
\K \right)$ for the generating set $Z_m$ with respect to the above
ordering, we  prove some equalities between the elements $a_k$,
$b_k$, $k\in \N_0$. 

\begin{proposition}
	\label{squares}
	For any $k$ we have $a_k^p = b_k^p =0$. 
\end{proposition}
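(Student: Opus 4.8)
The plan is to reduce the statement to a single divisibility of integers. Write $a_k=e_\alpha^{(p^k)}$. Since $e_\alpha^{(m)}=\tfrac1{m!}e_\alpha^m$ in $\U_3^+(\C)$, one computes directly (or, what amounts to the same thing, by iterating rule~\eqref{alpha})
\begin{align*}
 a_k^{\,p} = \bigl(e_\alpha^{(p^k)}\bigr)^{p} = \frac{1}{\bigl((p^k)!\bigr)^{p}}\,e_\alpha^{\,p^{k+1}} = \frac{(p^{k+1})!}{\bigl((p^k)!\bigr)^{p}}\,e_\alpha^{(p^{k+1})}.
\end{align*}
Both sides lie in $\U_3^+(\Z)$, and the scalar $\binom{p^{k+1}}{p^k,\dots,p^k}=(p^{k+1})!/\bigl((p^k)!\bigr)^{p}$ is a positive integer; so once we know this integer is divisible by $p$, reducing the identity modulo $p$ inside $\U_3^+(\K)=\K\otimes_\Z\U_3^+(\Z)$ gives $a_k^{\,p}=0$. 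The same computation with rule~\eqref{beta} in place of rule~\eqref{alpha} yields $b_k^{\,p}=0$.

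For the divisibility I would use Legendre's formula $v_p(n!)=\sum_{j\ge1}\lfloor n/p^j\rfloor$, exactly as in the proof of Corollary~\ref{subalgebra}. Here $v_p\bigl((p^{k+1})!\bigr)=p^{k}+p^{k-1}+\dots+1=\tfrac{p^{k+1}-1}{p-1}$ and $v_p\bigl((p^k)!\bigr)=\tfrac{p^k-1}{p-1}$, so
\begin{align*}
 v_p\!\left(\frac{(p^{k+1})!}{\bigl((p^k)!\bigr)^{p}}\right) = \frac{p^{k+1}-1}{p-1}-p\cdot\frac{p^k-1}{p-1} = 1>0,
\end{align*}
which is all we need. (Alternatively one can invoke Kummer's theorem: adding $p$ copies of $p^k$ in base $p$ produces a carry out of the $p^k$-digit, so the multinomial coefficient is a multiple of $p$.)

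There is no genuinely hard step here; the only points requiring a little care are that the manipulation of scalars is carried out in $\U_3^+(\Z)$ (or $\U_3^+(\C)$) before tensoring down to $\K$, so that dividing by $(p^k)!$ is legitimate, and — if one prefers the route through rule~\eqref{alpha} rather than the closed formula $e_\alpha^{(m)}=\tfrac1{m!}e_\alpha^m$ — that the telescoping product $\prod_{j=1}^{p}\binom{jp^k}{p^k}$ of binomial coefficients really does collapse to $(p^{k+1})!/\bigl((p^k)!\bigr)^{p}$, which is immediate on writing each factor as a ratio of factorials.
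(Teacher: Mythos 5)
Your proof is correct, but it takes a different route from the paper's. You compute the scalar explicitly: $a_k^p=\frac{(p^{k+1})!}{((p^k)!)^p}\,e_\alpha^{(p^{k+1})}$, working in $\U_3^+(\Z)$ so the manipulation of factorials is legitimate, and then kill the multinomial coefficient by a Legendre-valuation (or Kummer carry) computation giving $v_p=1$. The paper instead argues structurally: $a_k$ lies in the subalgebra $\U_3^{k+1}(\K)$ established in Corollary~\ref{subalgebra}, so $a_k^p\in\U_3^{k+1}(\K)$ as well; but $a_k^p$ is a scalar multiple of the basis element $a_{k+1}=e_\alpha^{(p^{k+1})}$, which does \emph{not} lie in $\U_3^{k+1}(\K)$, forcing the scalar to vanish. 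The two arguments rest on the same underlying number theory --- the closure of $\U_3^{k+1}(\K)$ under multiplication was itself proved via the Legendre formula you quote --- so your version is essentially an unpacking of the paper's, trading the appeal to Corollary~\ref{subalgebra} for a self-contained valuation count. What your approach buys is independence from that corollary and an explicit value of the coefficient (it has $p$-adic valuation exactly $1$); what the paper's buys is brevity and reuse of work already done. One small point of care in your telescoping route: the successive applications of rule~\eqref{alpha} produce $\prod_{j=2}^{p}\binom{jp^k}{(j-1)p^k}$, which equals your $\prod_{j=1}^{p}\binom{jp^k}{p^k}$ by symmetry of binomial coefficients and does collapse to $(p^{k+1})!/((p^k)!)^p$ as you say; note also that for $p\ge 3$ the intermediate factors $\binom{jp^k}{p^k}$ with $j<p$ are nonzero mod $p$ by Lucas, so the vanishing really does come only from the full product (equivalently, from the last factor $\binom{p^{k+1}}{p^k}$), consistent with your valuation being exactly $1$.
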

\begin{proof}
	We know that $a_k$ is an element of the subalgebra $\U_3^{k+1}\left(
	\K \right)$, and that $a_k^p$ is a linear multiple of $a_{k+1}$. Since
	$a_{k+1}\not\in \U_3^{k+1}\left( \K \right)$, it follows that the
	coefficient of multiplication is zero, and therefore $a_k^p=0$. The
	claim $b_k^p=0$ is proved in the same way. 
\end{proof}
\begin{proposition}
	\label{commute}
	For any $l$ and $k$ elements $a_l$ and $a_k$ commutes. Similarly
	$b_l$ and $b_k$.
\end{proposition}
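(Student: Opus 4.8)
The plan is to read both identities straight off the rewriting system of Theorem~\ref{big}. By that theorem the rules \eqref{alpha} and \eqref{beta} are genuine equalities in $\U_3^+(\K)$, so I would simply instantiate them at powers of $p$. Writing $a_k=e_\alpha^{(p^k)}$, rule \eqref{alpha} with the exponents $p^k,p^l$ yields
$$
a_k a_l = \binom{p^k+p^l}{p^k}\, e_\alpha^{(p^k+p^l)},
$$
while the same rule with the two exponents interchanged yields $a_l a_k = \binom{p^k+p^l}{p^l}\, e_\alpha^{(p^k+p^l)}$. Since $\binom{n}{r}=\binom{n}{n-r}$ as integers, these two scalars agree after reduction modulo $\chr\K$, and hence $a_k a_l = a_l a_k$. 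The argument for $b_k$ and $b_l$ is word for word the same with \eqref{beta} in place of \eqref{alpha}.

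I do not expect any real obstacle; the only point worth flagging is that it is immaterial whether $\binom{p^k+p^l}{p^k}$ vanishes in $\K$: if it does, both products are $0$, and if it does not, both are the same nonzero multiple of $e_\alpha^{(p^k+p^l)}$, so commutativity holds either way. (By Lucas' theorem the coefficient is in fact nonzero unless $k=l$ and $\chr\K=2$, but this is not needed.) If one prefers a more conceptual phrasing, one can note that \eqref{alpha} exhibits $e_\alpha^{(k)}e_\alpha^{(l)}$ as a symmetric function of $k$ and $l$, so the subalgebra of $\U_3^+(\K)$ generated by all the $e_\alpha^{(k)}$ is commutative; since every $a_k$ lies in this subalgebra, the $a_k$ pairwise commute, and dually the $b_k$ pairwise commute via \eqref{beta}.
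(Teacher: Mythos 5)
Your proof is correct, and it simply spells out what the paper dismisses as ``Obvious'': rule \eqref{alpha} (already verified as an identity in $\U_3^+(\K)$ in Theorem~\ref{big}) gives $a_ka_l$ and $a_la_k$ as the same binomial multiple of $e_\alpha^{(p^k+p^l)}$ by the symmetry $\binom{n}{r}=\binom{n}{n-r}$, and likewise for the $b$'s via \eqref{beta}. No gap; this matches the paper's (unstated) reasoning.
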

\begin{proof}
	Obvious.
\end{proof}
\begin{proposition}
	\label{skew}
	For any $l>k$ we have
	\begin{align}
		\label{skewone}
		a_lb_k& - b_ka_l +\left( -1 \right)^{l-k} a_k^{p-1}b_ka_ka_{k+1}^{p-1}\dots
		a_{l-1}^{p-1} =0\\
		\label{skewtwo}
		b_la_k&- a_kb_l -\left( -1 \right)^{l-k} b_ka_kb_k^{p-1}b_{k+1}^{p-1}\dots
		b_{l-1}^{p-1}=0
		\end{align}
		in $\U_3^+(\K)$.
\end{proposition}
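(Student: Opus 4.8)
The plan is to prove both identities by induction on $l-k$, reducing everything to computations inside the concrete Kostant form $\U_3^+(\C)$ (or $\U_3^+(\Z)$), exactly as in the proof of Theorem~\ref{big}. By the symmetry $e_\alpha\leftrightarrow e_\beta$ (which swaps $a_k\leftrightarrow b_k$ and sends $e_{\alpha+\beta}$ to itself up to sign in the commutation relations) it suffices to establish \eqref{skewone}; \eqref{skewtwo} then follows by applying the corresponding anti-automorphism or by a verbatim parallel computation. So I would first record that $a_k = e_\alpha^{(p^k)}$ and $b_k = e_\beta^{(p^k)}$, and that by the rewriting rule \eqref{betaalpha} the product $e_\beta^{(m)}e_\alpha^{(n)}$ expands as $\sum_{j=0}^{\min(m,n)}(-1)^j e_\alpha^{(n-j)}e_{\alpha+\beta}^{(j)}e_\beta^{(m-j)}$.

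For the base case $l = k+1$, I would compute $a_k b_k - b_k a_k$ directly using \eqref{betaalpha} with $m=n=p^k$: one gets $b_k a_k = \sum_{j=0}^{p^k}(-1)^j e_\alpha^{(p^k-j)}e_{\alpha+\beta}^{(j)}e_\beta^{(p^k-j)}$, hence $a_k b_k - b_k a_k = -\sum_{j=1}^{p^k}(-1)^j e_\alpha^{(p^k-j)}e_{\alpha+\beta}^{(j)}e_\beta^{(p^k-j)}$. The claim for $l=k+1$ is then that this sum collapses, modulo the relations in $\U_3^+(\K)$, to $-(-1)\,a_k^{p-1}b_k a_k = a_k^{p-1}b_k a_k$, which should follow because in characteristic $p$ the binomial coefficients ${p^k \choose j}$ appearing when one re-expresses $e_\alpha^{(p^k-j)}$, $e_\beta^{(p^k-j)}$, $e_{\alpha+\beta}^{(j)}$ in terms of the generators $a_k$, $b_k$, $e_{\alpha+\beta}^{(p^k)}$ vanish for $0<j<p^k$ by the estimate in Corollary~\ref{subalgebra}, leaving only the $j=1$ and $j=p^k$ terms; one must then re-express $e_{\alpha+\beta}^{(p^k)}$ via the formula from the proof of Theorem~\ref{genset} and check the surviving term is exactly $a_k^{p-1}b_k a_k$.

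For the inductive step I would write $a_l = e_\alpha^{(p^l)}$ and peel off one factor, e.g.\ $a_l = \tfrac{1}{?}\,a_{l-1}^{\,?}\cdots$ — more precisely use $e_\alpha^{(p^l)} = \frac{1}{p!}\bigl(e_\alpha^{(p^{l-1})}\bigr)^p$ when $l-1\geq 0$, or better, write $a_l$ in terms of $a_{l-1}$ using rule \eqref{alpha} and move $b_k$ past one generator at a time, applying the induction hypothesis \eqref{skewone} for the pair $(l-1,k)$ at each stage and collecting terms. The recursion should produce the telescoping product $a_k^{p-1}b_k a_k a_{k+1}^{p-1}\cdots a_{l-1}^{p-1}$ with the sign $(-1)^{l-k}$, using Proposition~\ref{squares} ($a_j^p=0$) to kill unwanted higher-degree contributions and Proposition~\ref{commute} to reorder the $a_j$'s. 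I expect the main obstacle to be the bookkeeping in this inductive step: tracking exactly which monomials in the $e_{\alpha+\beta}^{(j)}$-expansion survive modulo $p$, controlling the signs, and verifying that the leftover product is precisely $a_k^{p-1}b_k a_k a_{k+1}^{p-1}\cdots a_{l-1}^{p-1}$ and not some variant — this is a careful but essentially mechanical characteristic-$p$ computation, and the real content is the vanishing of the intermediate binomial coefficients, which is already supplied by the arguments in Corollary~\ref{subalgebra} and Theorem~\ref{genset}.
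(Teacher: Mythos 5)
Your overall strategy diverges from the paper's and, as set up, does not work. The paper's proof is a single direct computation, not an induction on $l-k$: it applies \eqref{betaalpha} to $e_\beta^{(p^k)}e_\alpha^{(p^l)}$, uses Lucas' theorem to factor $e_\alpha^{(p^l-j)}=e_\alpha^{((p-1)p^k)}e_\alpha^{(p^l-p^{k+1}+p^k-j)}$ uniformly for $1\le j\le p^k$, pulls out the common left factor $e_\alpha^{((p-1)p^k)}=-a_k^{p-1}$, recognizes the remaining sum as $b_k\,e_\alpha^{(p^l-p^{k+1}+p^k)}$ by running \eqref{betaalpha} backwards, and finally factors $e_\alpha^{(p^l-p^{k+1}+p^k)}=(-1)^{l-k-1}a_ka_{k+1}^{p-1}\cdots a_{l-1}^{p-1}$ by Lucas again. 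Your induction has two concrete failures. First, the base case: for $l=k+1$ the identity concerns $a_{k+1}b_k-b_ka_{k+1}$, i.e.\ $e_\alpha^{(p^{k+1})}$ against $e_\beta^{(p^k)}$, yet you compute $a_kb_k-b_ka_k$ with $m=n=p^k$. Moreover, your claim that $\sum_{j=1}^{p^k}(-1)^je_\alpha^{(p^k-j)}e_{\alpha+\beta}^{(j)}e_\beta^{(p^k-j)}$ collapses because intermediate binomial coefficients vanish is false: these summands are distinct PBW basis elements of the Kostant form, already in normal form, and none of them is zero. The collapse in the genuine base case comes from the exponent $p^{k+1}-j$ in the $\alpha$-factor, which is what makes the Lucas factorization produce the common factor $e_\alpha^{((p-1)p^k)}$; it has nothing to do with terms dying.

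Second, the inductive step is unworkable as described. You propose to peel a factor off $a_l$ via $e_\alpha^{(p^l)}=\frac{1}{p!}\bigl(e_\alpha^{(p^{l-1})}\bigr)^p$, but in characteristic $p$ this is division by zero, and indeed $a_{l-1}^p=0$ by Proposition~\ref{squares}, so $a_l$ does not lie in the subalgebra generated by $a_{l-1}$ and cannot be reached from it by rule \eqref{alpha}. Nor can you run the induction over $\C$ or $\Z$ and then specialize, because \eqref{skewone} is intrinsically a mod-$p$ identity: over $\C$ the commutator of $a_l$ and $b_k$ is a sum of $p^k$ nonzero PBW terms and is not a single product. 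Finally, the reduction of \eqref{skewtwo} to \eqref{skewone} by the swap $e_\alpha\leftrightarrow e_\beta$ is not verbatim: applying that automorphism to \eqref{skewone} yields the term $b_k^{p-1}a_kb_k$ where \eqref{skewtwo} has $-b_ka_kb_k^{p-1}$, and these are not obviously equal; the paper instead carries out a second, parallel computation for \eqref{skewtwo}. The one correct instinct in your proposal is that Lucas-type vanishing of binomial coefficients drives the proof, but it must be applied to the factorization of $e_\alpha^{(p^l-j)}$, not to the individual PBW summands.
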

\begin{proof}
	First we note, that $a_k^{p-1} = -e_{\alpha}^{\left( (p-1)p^k \right)}$. In
	fact, $a_k^{p-1} = \left( p-1 \right)! e_{\alpha}^{\left( (p-1)p^k
	\right)}$. Now $\left( p-1 \right)!$ is the product of all elements in
	$\F_p^*$.  
	The elements of $\F_p^*\setminus\left\{ 1,-1 \right\}$ can be grouped in
	pairs $\left\{ \lambda, \lambda^{-1} \right\}$ with
	$\lambda\not=\lambda^{-1}$. Therefore the product $\left( p-1
	\right)!$ equals to $1\cdot -1 = -1$. 

	 By \eqref{betaalpha}, we get
	 \begin{align*}
		 b_k a_l &= e_{\beta}^{\left( p^k \right)} e_{\alpha}^{\left(
		 p^l \right)} = e_{\alpha}^{\left( p^l \right)}
		 e_{\beta}^{\left( p^k \right)} + \sum_{j=1}^{p^k} \left( -1
		 \right)^j e_{\alpha}^{\left( p^l-j \right)}
		 e_{\alpha+\beta}^{\left( j \right)} e_{\beta}^{\left( p^k-j
		 \right)}\\
		 &= a_l b_k + \sum_{j=1}^{p^k}\left( -1 \right)^j
		 e_{\alpha}^{\left( p^l-j \right)} e_{\alpha+\beta}^{\left( j
		 \right)} e_{\beta}^{\left( p^k-j \right)}. 
	 \end{align*}
	 For $1\le  p^k$, we have
	 \begin{align} 
		 \label{pl}
		 p^l-j &= \left( p-1 \right)p^{l-1} + \left( p-1
		 \right)p^{l-2} + \dots + \left( p-1 \right)p^k
	 	 +\left( p^k-j \right),
	 \end{align}
	 where $p^k-j\le p^k-1$. 
	 Therefore from \eqref{alpha} and the Lucas' theorem, it follows that
	 $e_{\alpha}^{\left( p^l-j \right)} = e_{\alpha}^{\left( \left( p-1
	 \right)p^k\right)}e_{\alpha}^{\left( p^l-p^{k+1}+p^k-j
	 \right)} $. Moreover, ${p^l \choose (p-1)p^k} = 0$ and
	 $e_{\alpha}^{\left( (p-1)p^k \right)} e_{\alpha}^{\left(
	 p^l-p^{k+1}-p^k \right)}
	 =0$. Therefore
	 \begin{align*}
		 \sum_{j=1}^{p^k} \left( -1 \right)^j e_{\alpha}^{\left( p^l-j
		 \right)} e_{\alpha+\beta}^{\left( j \right)}
		 e_{\beta}^{\left( p^k-j \right)} &= e_{\alpha}^{\left(
		 (p-1)p^k\right)} \sum_{j=0}^{p^k} \left( -1 \right)^j
		 e_{\alpha}^{\left( p^l -p^{k+1} + p^k -j 
		 \right)}e_{\alpha+\beta}^{\left( j \right)}
		 e_{\beta}^{\left( p^k-j \right)}\\
		 &= e_{\alpha}^{\left( (p-1)p^k  \right)} 
		 \left( e_{\beta}^{\left( p^k \right)}e_{\alpha}^{\left(
		 p^l-p^{k+1} +p^k
		 \right)} \right), 
	 \end{align*}
	 where in the last step we used \eqref{alphabeta}. 
	 Now
	 $$
	 p^l-p^{k+1} + p^k = \left( p-1 \right) p^{l-1} + \dots + \left( p-1
	 \right)p^{k+1} + p^k.
	 $$
	 Therefore by the Lucas' theorem and \eqref{alpha}
	 \begin{align*} 
	 e_{\alpha}^{\left(
 	 p^l-p^{k+1} +p^k
	 \right)}&= e_{\alpha}^{\left( p^k \right)}
 e_{\alpha}^{\left( (p-1)p^{k+1} \right)} \dots e_{\alpha}^{\left((p-1)p^{l-1}\right)}
 \\& =\left( -1 \right)^{l-k-1}a_ka_{k+1}^{p-1}\dots a_{l-1}^{p-1}.
 	\end{align*}
Finally we get
$$
b_ka_l = a_lb_k + (-1)^{l-k}a_k^{p-1}b_ka_ka_{k+1}^{p-1}\dots a_{l-1}^{p-1}. 
$$
Now we prove \eqref{skewtwo}. We have by \eqref{betaalpha}
\begin{align*}
	b_la_k &= e_{\beta}^{\left( p^l \right)} e_{\alpha}^{\left( p^k
	\right)} = e_{\alpha}^{\left( p^k \right)} e_{\beta}^{\left( p^l
	\right)} + \sum_{j=1}^{p^k}
	\left( -1 \right)^j e_{\alpha}^{\left( p^k-j \right)}
	e_{\alpha+\beta}^{\left( j \right)} e_{\beta}^{\left( p^l-j \right)}.
\end{align*} 
From \eqref{pl}, the Lucas' theorem and \eqref{beta}, we get
$$
e_{\beta}^{\left( p^l-j \right)} = e_{\beta}^{\left( p^k-j
\right)}e_{\beta}^{\left( p-1 \right)p^k}\dots e_{\beta}^{\left( p-1
\right)p^{l-1}}.
$$
Taking into the account that $b_s^{p-1} = - e_{\beta}^{\left( (p-1)p^s
\right)}$, for all $s\in \N_0$, and $$e_{\beta}^{\left( p^l-p^{k+1}+p^k
\right)}e_{\beta}^{\left( p^{k+1}-p^k \right)} = 0,$$  we get
\begin{align*}
	\sum_{j=1}^{p^k} \left( -1 \right)^j e_{\alpha}^{\left( p^k-j
	\right)}e_{\alpha+\beta}^{\left( j \right)} e_{\beta}^{\left( p^l-j
	\right)}  &= \left( \sum_{j=0}^{p^k} e_{\alpha}^{\left( p^k-j
	\right)}e_{\alpha+\beta}^{\left( j \right)}e_{\beta}^{\left( p^k-j
	\right)} \right)\left( -1 \right)^{l-k}b_{k}^{p-1}\dots
	b_{l-1}^{p-1}\\
	&= \left( -1 \right)^{l-k} b_ka_kb_k^{p-1}\dots b_{l-1}^{p-1}
\end{align*}
and \eqref{skewtwo} follows. 
\end{proof}

\begin{proposition}
	\label{braid}
For all $k\in \N_0$, we have $\left( b_ka_k \right)^p=\left( a_kb_k \right)^p$. 
\end{proposition}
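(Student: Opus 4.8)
The plan is to use the Heisenberg commutation $[e_\alpha,e_\beta]=e_{\alpha+\beta}$ and the centrality of $e_{\alpha+\beta}$ to reduce the identity to the vanishing of a single $p$-th power that visibly lies in too high a degree. Write $N=p^k$, $t=e_\alpha e_\beta$, $C=e_{\alpha+\beta}$. First I would show that $a_kb_k$ and $b_ka_k$ commute. From \eqref{betaalpha} with $k=l=1$ one has $e_\beta e_\alpha=e_\alpha e_\beta-e_{\alpha+\beta}$, hence $[e_\alpha,t]=e_\alpha[e_\alpha,e_\beta]=Ce_\alpha$ and $[e_\beta,t]=[e_\beta,e_\alpha]e_\beta=-Ce_\beta$; since $C$ is central this gives $e_\alpha f(t,C)=f(t+C,C)e_\alpha$, $e_\beta f(t,C)=f(t-C,C)e_\beta$ for every polynomial $f$, together with $[t,C]=0$. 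A short induction on $N$ in $\U_3^+(\Z)$ then yields the product formulas
\[
 e_\alpha^{N}e_\beta^{N}=\prod_{i=0}^{N-1}(t+iC),\qquad e_\beta^{N}e_\alpha^{N}=\prod_{i=1}^{N}(t-iC),
\]
the inductive step for the first being $e_\alpha^{N+1}e_\beta^{N+1}=e_\alpha\bigl(\prod_{i=0}^{N-1}(t+iC)\bigr)e_\beta=\bigl(\prod_{i=1}^{N}(t+iC)\bigr)t$, and symmetrically for the second. Since $e_\alpha^{N}=N!\,e_\alpha^{(N)}$ and $e_\beta^{N}=N!\,e_\beta^{(N)}$ by \eqref{alpha} and \eqref{beta}, the elements $(N!)^2 a_kb_k$ and $(N!)^2 b_ka_k$ both lie in the commutative subring $\Z[t,C]$ of $\U_3^+(\Z)$ and so commute; as $\U_3^+(\Z)$ is torsion free, $a_kb_k$ and $b_ka_k$ commute in $\U_3^+(\Z)$, hence in $\U_3^+(\K)$ for any field $\K$.

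Next, set $z=b_ka_k-a_kb_k$. Because $z$ commutes with $a_kb_k$ and $p\mid\binom{p}{i}$ for $0<i<p$, expanding $(a_kb_k+z)^p$ by the binomial theorem in $\U_3^+(\Z)$ and reducing modulo $p$ gives $(b_ka_k)^p=(a_kb_k)^p+z^p$ in $\U_3^+(\K)$, so it remains to prove $z^p=0$ there. By \eqref{betaalpha} with $k=l=N$,
\[
 z=\sum_{j=1}^{N}(-1)^j e_\alpha^{(N-j)}e_{\alpha+\beta}^{(j)}e_\beta^{(N-j)}=(-1)^N c_k+E,\qquad c_k:=e_{\alpha+\beta}^{(p^k)},
\]
with $E:=\sum_{j=1}^{N-1}(-1)^j e_\alpha^{(N-j)}e_{\alpha+\beta}^{(j)}e_\beta^{(N-j)}$ (an empty sum, so $E=0$, when $k=0$). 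Here $c_k$ is central in $\U_3^+(\K)$ by \eqref{alpha}--\eqref{betaalphabeta}, and $c_k^p=0$ by the argument of Proposition~\ref{squares}: by \eqref{alphabeta} it is a scalar multiple of $e_{\alpha+\beta}^{(p^{k+1})}$, yet $c_k\in\U_3^{k+1}(\K)$, a subalgebra by Corollary~\ref{subalgebra}, while $e_{\alpha+\beta}^{(p^{k+1})}\notin\U_3^{k+1}(\K)$, forcing the scalar to be $0$. With $c_k$ central, $c_k^p=0$ and $\chr\K=p$, the binomial theorem gives $z^p=E^p$.

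Finally I would kill $E^p$ by a degree count. Every summand $e_\alpha^{(N-j)}e_{\alpha+\beta}^{(j)}e_\beta^{(N-j)}$ has $\deg$ equal to $N\alpha+N\beta$, so $E$ is $S$-homogeneous of degree $p^k\alpha+p^k\beta$ and $E^p$ of degree $p^{k+1}\alpha+p^{k+1}\beta$. On the other hand each such summand has all three divided powers of exponent $\le p^k-1$, so $E$, and hence $E^p$, lies in $\U_3^k(\K)$; by Corollary~\ref{subalgebra} this algebra is spanned by the $e_\alpha^{(i)}e_{\alpha+\beta}^{(j)}e_\beta^{(l)}$ with $i,j,l\le p^k-1$, and such a basis vector has $\alpha$-component $i+j\le 2p^k-2<p^{k+1}$, so $\U_3^k(\K)$ contains no nonzero element of degree $p^{k+1}\alpha+p^{k+1}\beta$. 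Hence $E^p=0$, so $z^p=0$, and $(b_ka_k)^p=(a_kb_k)^p$. The only substantive point is the first step — that $a_kb_k$ and $b_ka_k$ commute — for which the Heisenberg product formulas seem the cleanest tool; everything after is a binomial expansion together with the centrality and $p$-nilpotency of $c_k$ and a grading argument, all in the spirit of Propositions~\ref{squares}--\ref{skew}.
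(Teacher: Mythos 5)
Your proof is correct, and it reaches the conclusion by a genuinely different route from the paper's. The paper never isolates the single element $z=b_ka_k-a_kb_k$: it expands $b_ka_k$ via \eqref{betaalpha} into $a_kb_k$ plus the individual summands $e_{\alpha}^{(p^k-j)}e_{\alpha+\beta}^{(j)}e_{\beta}^{(p^k-j)}$ and $(-1)^{p^k}e_{\alpha+\beta}^{(p^k)}$, and then checks by direct computation that all of these summands pairwise commute --- in fact that the product of any two of the ``middle'' summands vanishes, by the same kind of degree bookkeeping in $\U_3^k(\K)$ that you use at the end --- after which $(\sum x_i)^p=\sum x_i^p$ in characteristic $p$ finishes the job. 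You instead establish the one commutation that is actually needed, namely $[a_kb_k,b_ka_k]=0$, globally, via the Heisenberg product formulas $e_\alpha^Ne_\beta^N=\prod_{i=0}^{N-1}(t+iC)$ and $e_\beta^Ne_\alpha^N=\prod_{i=1}^{N}(t-iC)$ in $\U_3^+(\Z)$, the observation that both lie in the commutative subring $\Z[t,C]$, and torsion-freeness of $\U_3^+(\Z)$; this is cleaner and isolates a structural fact that the paper's summand-by-summand computation obscures. After that the two arguments converge: both kill the surplus $p$-th powers by noting that the relevant elements lie in the subalgebra $\U_3^k(\K)$ (respectively are scalar multiples of $e_{\alpha+\beta}^{(p^{k+1})}$) while their $S$-degree is too large for that subalgebra to contain them. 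Your final degree count in fact already gives $E^2=0$, which is precisely the paper's statement that any two middle summands annihilate each other. All steps check out: the identities $e_\alpha f(t,C)=f(t+C,C)e_\alpha$ and $e_\beta f(t,C)=f(t-C,C)e_\beta$ are right, the passage from $(N!)^4[a_kb_k,b_ka_k]=0$ to $[a_kb_k,b_ka_k]=0$ is legitimate because $\U_3^+(\Z)$ is a free $\Z$-module, and the centrality and $p$-nilpotency of $c_k$ are correctly justified from \eqref{alphabeta}, \eqref{alphabetaalpha}, \eqref{betaalphabeta} and the argument of Proposition~\ref{squares}.
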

\begin{proof}
We have
$$
\left( b_ka_k \right)^p = \left( a_kb_k +
\sum_{j=1}^{p^k-1}e_{\alpha}^{\left( p^k-j \right)}e_{\alpha+\beta}^{\left( j
\right)}e_{\beta}^{\left( p^k-j \right)} +\left( -1 \right)^{p^k} e_{\alpha+\beta}^{\left( p^k
\right)} \right)^p.
$$
Let $1\le j\le p^k-1$ and $1\le s\le p^k-1$. Then
\begin{multline*}
	e_{\alpha}^{\left( p^k-j \right)}e_{\alpha+\beta}^{\left( j
	\right)}e_{\beta}^{\left( p^k-j \right)} e_{\alpha}^{\left( p^k-s
	\right)} e_{\alpha+\beta}^{\left( s \right)} e_{\beta}^{\left( p^k-s
	\right)} =\\= \sum_{r=0}^{min\left( p^k-j,p^k-s \right)} \left( -1
	\right)^r e_{\alpha}^{\left( p^k-j \right)}e_{\alpha}^{\left( p^k-s-r
	\right)} e_{\alpha+\beta}^{\left( j \right)}e_{\alpha+\beta}^{\left(
	r \right)}e_{\alpha+\beta}^{\left( s \right)} e_{\beta}^{\left( p^k-j-r
	\right)}e_{\beta}^{\left( p^k-s \right)}.
\end{multline*}
Every monom on the right hand side of the last formula has the form
$$
e_{\alpha}^{\left( k_1 \right)} e_{\alpha}^{\left( k_2
\right)}
e_{\alpha+\beta}^{\left(k_3\right)}e_{\alpha+\beta}^{\left(k_4\right)}e_{\alpha+\beta}^{\left(k_5\right)}e_{\beta}^{\left(k_6\right)}e_{\beta}^{\left(k_7\right)},
$$
with $0\le k_i\le p^k-1$ for all $1\le i\le 7$. In particular, every such monom
is an element of $\U_3^k\left( \K \right)$. Moreover, we have equations
$$
\begin{array}{c@{\,}c@{\,}c@{\,}c@{\,}c@{\,}c@{\,}c@{\,}c@{\,}c@{\,}c@{\,}c@{\,}c@{\,}c@{\,}c@{\,}c@{\,}c}
	k_1 & + & & & k_3 & & & & & & & & & = & p^k\\
	    &   & k_2 & + & & & k_4 & + & k_5 & & & & &= & p^k\\
	   & & & & k_3 &+& k_4 & + & & & k_6 & & & = & p^k\\
	   & & & & & & & & k_5 & + & & & k_7 & = & p^k. 
\end{array}
$$
Thus $k_1+ k_2 + 2\left( k_3+k_4+k_5 \right) + k_6+k_7 = 4p^k$. Thereforeat
least one of
the following inequalities holds 
\begin{align*}
	k_1+ k_2 &\ge p^k\\
	k_3 + k_4 + k_5&\ge p^k\\
	k_6 + k_7 & \ge p^k.
\end{align*}
Since the elements with divided power greater or equal then $p^k$ do not lie in
$\U_3^k\left( \K \right)$, we get that
one of the products
\begin{align*}
	e_{\alpha}^{\left( k_1 \right)}e_{\alpha}^{\left( k_2 \right)} &&
	e_{\alpha+\beta}^{\left( k_3 \right)}e_{\alpha+\beta}^{\left( k_4 \right)} e_{\alpha+\beta}^{\left( k_5 \right)}
	&& e_{\beta}^{\left( k_6
	\right)}e_{\beta}^{\left( k_7 \right)}
\end{align*}
is zero. This shows that
$$
	e_{\alpha}^{\left( p^k-j \right)}e_{\alpha+\beta}^{\left( j
	\right)}e_{\beta}^{\left( p^k-j \right)} e_{\alpha}^{\left( p^k-s
	\right)} e_{\alpha+\beta}^{\left( s \right)} e_{\beta}^{\left( p^k-s
	\right)}  = 0.
$$
In particular, any two elements in the sum $\sum_{j=1}^{p^k-1}
e_{\alpha}^{\left( p^k-j \right)} e_{\alpha+\beta}^{\left( j
\right)}e_{\beta}^{\left( p^k-j \right)}$ commute. 

Now consider for $1\le j\le p^{k}-1$ the product
\begin{align*}
	e_{\alpha}^{\left( p^k \right)}e_{\beta}^{\left( p^k
	\right)}e_{\alpha}^{\left( p^k-j \right)}e_{\alpha+\beta}^{\left( j
	\right)}e_{\beta}^{\left( p^k-j \right)}&= e_{\alpha}^{\left( p^k
	\right)} e_{\alpha}^{\left( p^k-j \right)}e_{\alpha+\beta}^{\left( j
	\right)}e_{\beta}^{\left( p^k \right)}e_{\beta}^{\left( p^k-j \right)}
	\\&\phantom{=}+  
	\sum_{r=1}^{p^k-j} \left( -1 \right)^r
	e_{\alpha}^{\left( p^k \right)}e_{\alpha}^{\left(p^k-j-r
	\right)}e_{\alpha+\beta}^{\left( r \right)}e_{\alpha+\beta}^{\left( j
	\right)}e_{\beta}^{\left( p^k-r \right)}e_{\beta}^{\left( p^k-j
	\right)}.
\end{align*}
Every monom on the right hand side for $1\le r\le p^k-1$ can be written in the form 
$$
e_{\alpha}^{\left( p^k \right)}e_{\alpha}^{\left( k_1
\right)}
e_{\alpha+\beta}^{\left(k_2\right)}e_{\alpha+\beta}^{\left(k_3\right)}e_{\beta}^{\left(k_4\right)}e_{\beta}^{\left(k_5\right)}, 
$$
where $1\le k_i\le p^k-1$ for $2\le i\le 5$. Moreover, we have
\begin{align*}
	k_2+k_4 &= p^k\\
	k_3 + k_5 &= p^k,
\end{align*}
which implies $k_2+k_3+k_4+k_5 = 2p^k$ and, therefore,  $k_2+k_3\ge p^k$ or
$k_4+k_5\ge p^k$. By the same consideration as above, we get
that $e_{\alpha+\beta}^{\left( k_2 \right)}e_{\alpha+\beta}^{\left( k_3
\right)} =0$ or $e_{\beta}^{\left( k_4 \right)}e_{\beta}^{\left( k_5
\right)} =0$, and thus
$$
e_{\alpha}^{\left( p^k \right)}e_{\alpha}^{\left( k_1
\right)}
e_{\alpha+\beta}^{\left(k_2\right)}e_{\alpha+\beta}^{\left(k_3\right)}e_{\beta}^{\left(k_4\right)}e_{\beta}^{\left(k_5\right)}
= 0. 
$$
Therefore 
\begin{align*}
	e_{\alpha}^{\left( p^k \right)}e_{\beta}^{\left( p^k
	\right)}e_{\alpha}^{\left( p^k-j \right)}e_{\alpha+\beta}^{\left( j
	\right)}e_{\beta}^{\left( p^k-j \right)}  = e_{\alpha}^{\left( p^k
	\right)} e_{\alpha}^{\left( p^k-j \right)}e_{\alpha+\beta}^{\left( j
	\right)}e_{\beta}^{\left( p^k \right)}e_{\beta}^{\left( p^k-j \right)} .
\end{align*}
Similarly, it can be shown that 
for $1\le j\le p^k-1$
$$
e_{\alpha}^{\left( p^k-j \right)} e_{\alpha+\beta}^{\left( j \right)}
e_{\beta}^{\left( p^k-j \right)} e_{\alpha}^{\left( p^k
\right)}e_{\beta}^{\left( p^k \right)} = e_{\alpha}^{\left( p^k-j
\right)}e_{\alpha}^{\left( p^k \right)}e_{\alpha+\beta}^{\left( j
\right)}e_{b}^{\left( p^k-j \right)}e_{\beta}^{\left( p^k \right)}.
$$
Thus $e_{\alpha}^{\left( p^k \right)}e_{\beta}^{\left( p^k \right)}$ commute
with every summand of $$\sum_{j=1}^{p^k-1}\left( -1 \right)^j
e_{\alpha}^{\left( p^k-j \right)}e_{\alpha+\beta}^{\left( j
\right)}e_{\beta}^{\left( p^k-j \right)}.$$ It is obvious, that
$e_{\alpha+\beta}^{\left( p^k \right)}$ commutes with 
$e_{\alpha}^{\left( p^k \right)}e_{\beta}^{\left( p^k \right)}$ and
with every summand of $\sum_{j=1}^{p^k-1}\left( -1 \right)^j
e_{\alpha}^{\left( p^k-j \right)}e_{\alpha+\beta}^{\left( j
\right)}e_{\beta}^{\left( p^k-j \right)}$.
Therefore
\begin{align*}
	&\left( a_kb_k + \sum_{j=1}^{p^k-1} e_{\alpha}^{\left( p^k-j
	\right)}e_{\alpha+\beta}^{\left( j \right)}e_{\beta}^{\left( p^k-j
	\right)} + (-1)^{p^k} e_{\alpha+\beta}^{\left( p^k \right)} \right)^p =
	\\&\phantom{a_kb_k+} = 
	\left( a_kb_k \right)^p + \sum_{j=1}^{p^k-1}\left( -1 \right)^{jp^k}
	\left( e_{\alpha}^{\left( p^k-j \right)}e_{\alpha+\beta}^{\left( j
	\right)}e_{\beta}^{\left( p^k-j \right)} \right)^p + \left(
	e_{\alpha+\beta}^{\left( p^k \right)}
	\right)^p = \left( a_kb_k \right)^p. 
\end{align*}
\end{proof}
\begin{proposition}
	If $\chr\K\ge 3$, then
	for any $k\in \N_0$, we have 
	\begin{align*}
		b_k^2a_k - 2b_ka_kb_k + a_kb_k^2 &= 0\\
		b_ka_k^2 - 2a_kb_ka_k + a_k^2 b_k & =0.
	\end{align*}
\end{proposition}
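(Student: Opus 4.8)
The plan is to reduce both identities to computations inside the subalgebra $\U_3^k(\K)$, following exactly the pattern of Proposition~\ref{braid}. Write $c_j := e_{\alpha}^{\left( p^k-j \right)}e_{\alpha+\beta}^{\left( j \right)}e_{\beta}^{\left( p^k-j \right)}$ for $1\le j\le p^k-1$, and recall from the proof of Proposition~\ref{braid} the expansion $b_ka_k = e_{\beta}^{\left( p^k \right)}e_{\alpha}^{\left( p^k \right)} = a_kb_k + \sum_{j=1}^{p^k-1} c_j + (-1)^{p^k} e_{\alpha+\beta}^{\left( p^k \right)}$, together with the facts established there: the $c_j$ pairwise annihilate each other ($c_jc_s=0$), and both $a_kb_k = e_{\alpha}^{\left( p^k \right)}e_{\beta}^{\left( p^k \right)}$ and $e_{\alpha+\beta}^{\left( p^k \right)}$ commute with every $c_j$. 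Set $t := b_ka_k - a_kb_k = \sum_{j=1}^{p^k-1} c_j + (-1)^{p^k} e_{\alpha+\beta}^{\left( p^k \right)}$; then $t$ commutes with $a_kb_k$, and since the $c_j$ are mutually annihilating and $e_{\alpha+\beta}^{\left( p^k \right)}$ commutes with each of them, $t^2 = \sum_j c_j^2 + \bigl(e_{\alpha+\beta}^{\left( p^k \right)}\bigr)^2 + 2(-1)^{p^k}\sum_j e_{\alpha+\beta}^{\left( p^k \right)} c_j$ — a sum over which we have good control.

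First I would rewrite the first identity as $b_k^2a_k - 2b_ka_kb_k + a_kb_k^2 = b_k(b_ka_k - a_kb_k) - (b_ka_k - a_kb_k)b_k = b_k t - t b_k$, so that the claim becomes $[b_k, t] = 0$; dually, the second identity is $[t, a_k] = 0$. So it suffices to show that $t = b_ka_k - a_kb_k$ commutes with both $a_k$ and $b_k$. Since $a_k = e_{\alpha}^{\left( p^k \right)}$, $b_k = e_{\beta}^{\left( p^k \right)}$, and each $c_j$ and $e_{\alpha+\beta}^{\left( p^k \right)}$ lies in $\U_3^k(\K)$, I would expand each commutator $[a_k, c_j]$ and $[b_k, c_j]$ using the rewriting rules \eqref{alpha}--\eqref{betaalphabeta} and show term by term that it vanishes, by the same degree-counting argument as in Proposition~\ref{braid}: every monomial produced has divided-power exponents summing in such a way that one of the three "blocks" (the $e_\alpha$-block, the $e_{\alpha+\beta}$-block, or the $e_\beta$-block) has total exceeding $p^k-1$, hence is zero in $\U_3^k(\K)$. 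The commutators $[a_k, e_{\alpha+\beta}^{\left( p^k \right)}]$ and $[b_k, e_{\alpha+\beta}^{\left( p^k \right)}]$ vanish trivially by \eqref{alphabetaalpha} and \eqref{betaalphabeta}.

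The main obstacle is the bookkeeping for $[b_k, c_j] = e_{\beta}^{\left( p^k \right)} e_{\alpha}^{\left( p^k-j \right)}e_{\alpha+\beta}^{\left( j \right)}e_{\beta}^{\left( p^k-j \right)} - e_{\alpha}^{\left( p^k-j \right)}e_{\alpha+\beta}^{\left( j \right)}e_{\beta}^{\left( p^k-j \right)}e_{\beta}^{\left( p^k \right)}$: applying \eqref{betaalpha} to move $e_{\beta}^{\left( p^k \right)}$ past $e_{\alpha}^{\left( p^k-j \right)}$ introduces a sum $\sum_{r=0}^{\min(p^k, p^k-j)} (-1)^r e_{\alpha}^{\left( p^k-j-r \right)} e_{\alpha+\beta}^{\left( r \right)} e_{\beta}^{\left( p^k-r \right)} e_{\alpha+\beta}^{\left( j \right)} e_{\beta}^{\left( p^k-j \right)}$, and one must check that the $r=0$ term matches the other side while all $r\ge 1$ terms die for degree reasons — precisely the style of argument already carried out for the product $e_{\alpha}^{\left( p^k \right)}e_{\beta}^{\left( p^k \right)} c_j$ in Proposition~\ref{braid}, where the relevant inequality was $k_2+k_3+k_4+k_5 = 2p^k$. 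Here the hypothesis $\chr\K\ge 3$ enters: it guarantees that $2$ is invertible so that the coefficient $-2$ in the identity is the genuine commutator coefficient and not a degenerate one (in characteristic $2$ the middle term would collapse), and it also ensures the pairing argument for $(p-1)!$ and the Lucas-type cancellations behave as expected. Once $[a_k,t]=[b_k,t]=0$ is established, both displayed identities follow immediately by the rewriting above.
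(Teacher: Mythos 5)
Your proposal is correct, and it takes a genuinely different route from the paper. The paper proves the first identity by brute force: it expands each of $b_k^2a_k$, $b_ka_kb_k$ and $a_kb_k^2$ in the basis $e_{\alpha}^{(\cdot)}e_{\alpha+\beta}^{(\cdot)}e_{\beta}^{(\cdot)}$ using rule \eqref{betaalpha} and Lucas' theorem (the key observations being $b_k^2=2e_{\beta}^{(2p^k)}$ and $e_{\beta}^{(p^k-j)}e_{\beta}^{(p^k)}=e_{\beta}^{(2p^k-j)}$ for $j\ge 1$), and watches the common sum $\sum_{j\ge 1}e_{\alpha}^{(p^k-j)}e_{\alpha+\beta}^{(j)}e_{\beta}^{(2p^k-j)}$ cancel; it then gets the second identity from the first via the automorphism swapping $e_\alpha^{(k)}$ and $e_\beta^{(k)}$. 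You instead observe that both identities say exactly that $t:=b_ka_k-a_kb_k$ commutes with $b_k$ and with $a_k$, expand $t$ as $\sum_j(\pm)c_j+(-1)^{p^k}e_{\alpha+\beta}^{(p^k)}$, and kill each commutator $[b_k,c_j]$, $[a_k,c_j]$ summand by summand with the Lucas/degree mechanism of Proposition~\ref{braid}. I checked this: after rewriting $b_kc_j$ via \eqref{betaalpha}, the $r=0$ term is ${2p^k-j\choose p^k}e_{\alpha}^{(p^k-j)}e_{\alpha+\beta}^{(j)}e_{\beta}^{(2p^k-j)}=c_jb_k$, and for $r\ge 1$ the $e_\beta$-block $e_{\beta}^{(p^k-r)}e_{\beta}^{(p^k-j)}$ has both exponents at most $p^k-1$ and sum at least $p^k$, hence vanishes; dually for $[a_k,c_j]$. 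Your approach is more conceptual (it exhibits $[b_k,a_k]$ as ``central'' with respect to $a_k,b_k$, mirroring the Heisenberg structure, and treats the two identities symmetrically without the automorphism), at the cost of more rewriting bookkeeping. Two small cautions. First, ``lands outside $\U_3^k(\K)$, hence is zero'' is only valid for a \emph{product of two factors that each lie in} $\U_3^k(\K)$ (the subalgebra argument of Corollary~\ref{subalgebra}); since $b_k=e_{\beta}^{(p^k)}$ itself is not in $\U_3^k(\K)$, you must first absorb it via \eqref{betaalpha} before counting exponents — you do this, but the justification should be phrased that way. Second, your account of where $\chr\K\ge 3$ enters is off: the identities hold trivially in characteristic $2$ as well (there $a_k^2=b_k^2=0$); the hypothesis is present only because these relations are redundant in the Gr\"obner basis when $p=2$, not because the proof needs $2$ invertible.
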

\begin{proof}
	We have
	\begin{align*}
		b_k^2 a_k &= {2p^k \choose p^k} e_{\beta}^{\left( p^k \right)} e_{\alpha}^{\left( p^k
		\right)} = 2 \sum_{j=0}^{p^k}
		e_{\alpha}^{\left( p^k-j \right)} e_{\alpha+\beta}^{\left( j
		\right)} e_{\beta}^{\left( 2p^k-j \right)}\\
		&= 2e_{\alpha}^{\left( p^k \right)} e_{\beta}^{\left( 2p^k
		\right)} + 2 \sum_{j=1}^{p^k}
		e_{\alpha}^{\left( p^k-j \right)} e_{\alpha+\beta}^{\left( j
		\right)} e_{\beta}^{\left( 2p^k-j \right)}\\
a_kb_k^2 + 2 \sum_{j=1}^{p^k}
		e_{\alpha}^{\left( p^k-j \right)} e_{\alpha+\beta}^{\left( j
		\right)} e_{\beta}^{\left( 2p^k-j \right)}
	\end{align*}
On the other hand
	\begin{align*}
		b_ka_kb_k &= e_{\beta}^{\left( p^k \right)}e_{\alpha}^{\left(
		p^k \right)}e_{\beta}^{\left( p^k \right)} =
		\sum_{j=0}^{p^k} e_{\alpha}^{\left( p^k-j
		\right)}e_{\alpha+\beta}^{\left( j \right)}e_{\beta}^{\left(
		p^k-j \right)} e_{\beta}^{\left( p^k \right)}.
		\end{align*}
		
		For $j=0$, we have $e_{\beta}^{\left( p^k-j \right)}
		e_{\beta}^{\left( p^k \right)} = b_k^2$. On the other
		hand, for $1\le j\le p^k$, we have $2p^k = p^k + \left( p^k-j
		\right)$, and $0\le p^k-j\le p^k-1$. Therefore, from the Lucas'
		theorem if follows that ${2p^k-j \choose p^k} = 1$, and
		$e_{\beta}^{\left( p^k-j \right)}e_{\beta}^{\left( p^k
		\right)} = e_{\beta}^{\left( 2p^k-j \right)}$. Thus
		$$
		b_ka_kb_k = a_kb_k^2 + \sum_{j=1}^{p^k}e_{\alpha}^{\left( p^k-j
		\right)}e_{\alpha+\beta}^{\left( j \right)} e_{\beta}^{\left(
		2p^k-j \right)}
		$$
		and 
		$$
		b_k^2a_k - 2b_ka_kb_k = a_kb_k^2.
		$$
	The second equality follows from the first one, after noticing that
	$e_{\alpha}^{\left( k \right)} \mapsto e_{\beta}^{\left( k
	\right)}$, $e_{\beta}^{\left( k \right)}\mapsto e_{\alpha}^{\left( k
	\right)}$, $e_{\alpha+\beta}^{\left( k \right)}\mapsto \left( -1
	\right)^k e_{\alpha+\beta}^{\left( k \right)}$ can be prolonged to an
	automorphism of $\U_3^+\left( \K \right)$.	
\end{proof}

	Denote by $\pi_m$ the natural projection $\K\left\langle Z^*_m\
	\right\rangle\to \U_3^m\left( \K \right)$.
\begin{proposition}
	\label{gbl}
	Suppose $\chr\K =p$. 
	The following set $G_m$ of elements in $\K\left\langle X_m^* \right\rangle$
	\begin{align}
		a_lb_k& - b_ka_l + (-1)^{l-k}a_k^{p-1}b_ka_ka_{k+1}^{p-1}\dots
		a_{l-1}^{p-1},& 0\le k<l\le m-1\\
		b_la_k&- a_kb_l -\left( -1 \right)^{l-k} b_ka_kb_kb_{k+1}\dots
		b_{l-1},&
		0\le k<l\le m-1\\
		a_la_k& + a_ka_l, & 0\le k<l\le m-1\\
		b_lb_k & + b_kb_l,& 0 \le k<l\le m-1\\
		(b_ka_k)^p&-  (a_kb_k)^p, & 0\le k\le m-1\\
		a_k^p ,& & 0\le k\le m-1\\
		b_k^p, & & 0 \le k\le m-1\\
		b_k^2a_k& - 2b_ka_kb_k + a_kb_k^2, & 0\le k\le m-1,\ p\ge 3,\\
		b_ka_k^2& - 2a_kb_ka_k + a_k^2b_k, & 0 \le k\le m-1, \ p\ge 3
	\end{align}
	is a reduced Gr\"obner basis of $\ker(\pi_m)$. 
\end{proposition}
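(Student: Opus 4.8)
The plan is to verify that $G_m$ is a Gröbner basis by applying Theorem~\ref{groebner}: it suffices to show that each element of $G_m$ lies in $\ker(\pi_m)$ and that the associated rewriting system $R_m = \{\, r(g) \,\mid\, g\in G_m \,\}$ is complete, i.e.\ every critical pair is reducible. Membership in $\ker(\pi_m)$ is exactly the content of Propositions~\ref{squares}, \ref{commute}, \ref{skew}, \ref{braid}, and the last proposition (together with the fact that $(b_ka_k)^p$ and $(a_kb_k)^p$ have leading monomial in the chosen order; one must first check which side of each relation is leading under the $\Deg$-lexicographical order on $Z_m^*$ with $a_0<b_0<a_1<b_1<\cdots$, so that the rewriting rules $r(g)$ point the way claimed). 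So the substantive work is completeness.

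First I would read off the leading monomials: from the skew relations the leading monomials are $a_lb_k$ and $b_la_k$ for $k<l$; from the commutation relations $a_la_k$ and $b_lb_k$ for $k<l$; from $a_k^p$, $b_k^p$ the monomials $a_k^p$, $b_k^p$; from the braid relation the leading monomial of $(b_ka_k)^p - (a_kb_k)^p$ (which is $(b_ka_k)^p$, since $b_k > a_k$); and from the degree-$3$ relations the monomials $b_k^2 a_k$ and $b_k a_k^2$. Then I would enumerate all overlaps between these leading monomials. The overlaps fall into a manageable list of families, organized by how many distinct indices $k<l<n$ are involved and by whether the overlapping variable is an $a$ or a $b$: e.g.\ $a_n a_l a_k$ (two commutations overlapping), $a_n b_l a_k$, $b_n a_l b_k$, $a_l b_k b_k \cdots$ versus $b_k^p$, $a_l a_k \cdots a_k$ versus $a_k^p$, the self-overlaps of $a_k^p$ with itself giving $a_k^{p+1}$, the overlaps of $b_k^2 a_k$ with $a_k^p$ and with $b_k^p$, overlaps of $b_k a_k^2$ with $a_k^p$, overlaps involving the braid monomial $(b_k a_k)^p$ with $b_k^p$ on the left and $a_k^p$ on the right and with $b_l b_k$ or $a_l a_k$, and the triple-index overlaps where a skew relation for the pair $(k,l)$ overlaps one for the pair $(k,n)$ or $(l,n)$. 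For each such overlap $(w,r_1,r_2)$ I would compute the S-polynomial $uf_1 - f_2 v$ (or $uf_1v - f_2$) and reduce it using $R_m$, checking it goes to $0$.

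The main obstacle I anticipate is the sheer bookkeeping in the triple-index overlaps among the skew relations \eqref{skewone}--\eqref{skewtwo}, where the ``tails'' $a_k^{p-1}b_ka_ka_{k+1}^{p-1}\cdots a_{l-1}^{p-1}$ interact: resolving, say, the overlap $b_n a_l b_k$ requires moving $b_n$ past $a_l$ (producing a tail in the $a$'s), then past $b_k$, against the alternative of first moving $b_n$ past $b_k$ then $a_l$; the resulting long monomials must be shown equal after full reduction, and this is where the precise signs $(-1)^{l-k}$ and the vanishing of mixed divided-power products (as exploited in Proposition~\ref{braid}) do the work. A clean way to sidestep much of this: since all elements of $G_m$ already lie in $\ker(\pi_m)$ and the set $B_m$ of $R_m$-irreducible monomials is visibly $\{\, a_0^{i_0} b_0^{j_0} a_1^{i_1} b_1^{j_1}\cdots \mid 0\le i_s, j_s \le p-1,\ \text{plus the degree-3 constraint when }p=2\,\}$ — wait, more carefully: the irreducible monomials are those avoiding every leading monomial above — I would instead count $\dim \U_3^m(\K)$ using the PBW-type basis $B'$ from Corollary~\ref{subalgebra} and show $|B_m| \le \dim\U_3^m(\K)$; combined with $\pi_m$ surjective and $G_m \subseteq \ker(\pi_m)$, this forces $\pi_m$ restricted to the span of $B_m$ to be an isomorphism, giving the Gröbner basis property directly from the definition without checking a single critical pair. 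Completeness then follows a posteriori from Theorem~\ref{groebner}. Finally, reducedness is checked by inspecting that no leading monomial of an element of $G_m$ contains the leading monomial of another, and that the tails are already $R_m$-irreducible — for instance $a_k^{p-1}b_ka_ka_{k+1}^{p-1}\cdots a_{l-1}^{p-1}$ has each exponent $\le p-1$ and is in increasing index order except for the single descent $b_k a_k$, which is not a leading monomial (the skew rule has leading term $a_l b_k$ with $l>k$, not $b_k a_k$).
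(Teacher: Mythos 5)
Your final strategy --- abandon the critical-pair check, use that $G_m\subseteq\ker(\pi_m)$ so the irreducible monomials span $\U_3^m(\K)$, and then win by comparing $|B_m|$ with $\dim\U_3^m(\K)=p^{3m}$ from Corollary~\ref{subalgebra} --- is exactly the route the paper takes. Your identification of the leading monomials (including $(b_ka_k)^p$ over $(a_kb_k)^p$) and your reducedness check are also fine.

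However, the decisive step of that strategy is the inequality $|B_m|\le p^{3m}$, and you never actually establish it. Your first description of the irreducible monomials as $a_0^{i_0}b_0^{j_0}a_1^{i_1}b_1^{j_1}\cdots$ with $i_s,j_s\le p-1$ is wrong --- such a set has only $p^{2m}$ elements, far too few to span, and the forbidden words $b_k^2a_k$, $b_ka_k^2$, $(b_ka_k)^p$ do not reduce the index-$k$ block to a single $a_k^ib_k^j$. You notice this (``wait, more carefully'') but then leave the enumeration hanging, which is where the actual content lies. The correct analysis, which the paper carries out, is: a reduced monomial has weakly increasing indices (from the four leading monomials $a_lb_k$, $b_la_k$, $a_la_k$, $b_lb_k$ with $k<l$), so it factors as $t_0t_1\cdots t_{m-1}$ with $t_k$ a word in $a_k,b_k$ only; avoiding $b_k^2a_k$ and $b_ka_k^2$ forces $t_k=a_k^{r}(b_ka_k)^{n}b_k^{s}$, and avoiding $a_k^p$, $(b_ka_k)^p$, $b_k^p$ forces $r,n,s\le p-1$, giving at most $p^3$ choices per index and hence at most $p^{3m}$ in total. (For $p=2$ the degree-$3$ relations are absent, but $a_k^2=b_k^2=0$ already forces $t_k$ to alternate, and the same normal form and count go through.) Without this computation the proof is incomplete: spanning alone does not give linear independence, and the bound is exactly what rules out extra irreducible monomials.
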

\begin{proof}
	If follows from Propositions~\ref{squares},~\ref{commute},~\ref{skew},~\ref{braid}, that
	$G_m$ is a subset of $\ker(\pi)$. 
	Thus it is enough to show that the images of non-reducible monomials in
	$X_m^*$ give a basis of $\U^m_3(\K)$. Since the images of
	non-reducible monomials in 
	$X^*_m$ generate $\U^m_3(\K)$ as a vector space and
	$\U^m_3(\K)$ is finite dimensional, it is enough to show that
	the number of non-reducible monomials in $X_m^*$ with respect to $G_m$ is
	less or equal to the dimension of $U^m_3(\K)$. From
	Corollary~\ref{subalgebra} it follows that the dimension of
	$\U_3^m(\K)$ is $\left( p^{m} \right)^3= p^{3m}$. 

	 Let $t$ be a monomial non-reducible with
	respect to $G_m$. Since $t$ does not contain  submonomials $a_lb_k$, $b_la_k$, $a_la_k$,
	$b_lb_k$ for $0\le k<l\le m-1$, the indices of variables in $t$ weakly
	increase from the left to right. We denote by $t_k$ a submonomial of
	$t$ that consists from the all variables with index $k$. Then $t=t_0 t_1
	\dots t_{m-1}$. 
	
	For $0\le k\le m-1$, the monomial $t_k$ has the form
	$$
	a_k^{r_1}b_k^{s_1}\dots a_k^{r_n}b_k^{s_n}, 
	$$
	where $n\in \N$, $r_i$ and $s_i$ are non-zero natural numbers, except
	probably of $r_1$ and $s_n$. Since $t_k$ does not contain subnomials
	$b_k^2a_k$ and $b_ka_k^2$, we see that $s_1=r_2=\dots = s_{n-1} = r_n =
	1$. Thus $t_k$ has the form $a_k^{r_1}\left( b_ka_k
	\right)^{n-1}b_k^{s_n}$. As $t_k$  does not contain subwords $a_k^p$,
	$\left( b_ka_k \right)^p$, and $b_k^p$, we get that $r_1\le p-1$,
	$n-1\le p-1$, and $s_n\le p-1$. Thus the number of different
	possibilities for $t_k$ does not exceed $p^3$, and the number of
	different possibilities for $t$ does not exceed $\left( p^3
	\right)^{m} = p^{3m}$. 
\end{proof}
\begin{corollary}
	\label{gb}
	The set $G$ is a reduced Gr\"obner basis of
	$\ker(\pi)$, where $\pi$ is the natural projection $\K\left\langle Z^*
	\right\rangle\to \U_3^+(\K)$. 
\end{corollary}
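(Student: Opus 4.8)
The plan is to deduce the statement for the infinite generating set $Z$ and the algebra $\U_3^+(\K)$ from the finite-level statement of Proposition \ref{gbl} by a direct limit argument. First I would observe that $\U_3^+(\K) = \bigcup_{m\ge 0} \U_3^m(\K)$: by Theorem \ref{genset} the set $Z$ generates $\U_3^+(\K)$, and every element of $Z$ lies in some $Z_m$, so every element of $\U_3^+(\K)$ is a polynomial in finitely many of the $a_l$, $b_l$ and hence lies in $\U_3^m(\K)$ for $m$ large enough. Likewise $\K\langle Z^*\rangle = \bigcup_m \K\langle Z_m^*\rangle$, the projection $\pi$ restricts to $\pi_m$ on $\K\langle Z_m^*\rangle$, and $\ker(\pi) = \bigcup_m \ker(\pi_m)$.

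Next I would note that the $\Deg$-lexicographic order on $Z^*$ induced by $a_0<b_0<a_1<b_1<\dots$ restricts, on each $Z_m^*$, to exactly the order used in Proposition \ref{gbl}, so the notions of leading monomial, reducibility, and the rewriting rules $r(g)$ for $g\in G$ are compatible across levels: $G_m = G \cap \K\langle Z_m^*\rangle$, and for a monomial $w\in Z_m^*$, being non-reducible with respect to $G$ is the same as being non-reducible with respect to $G_m$ (any rule of $G$ whose leading monomial divides $w$ already has that leading monomial in $Z_m^*$, hence lies in $G_m$ — this is precisely the stability phenomenon used in Proposition \ref{subsystem}). Moreover the order $\ll$ on $Z^*$ is artinian, since it refines the $\Deg$-lexicographic order on the two-letter alphabet $Y^*$ via the same device as for $X^*$ in Section \ref{bigbasis}, and each $\phi$-fiber is finite; so normal forms exist.

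With these compatibilities in hand, the verification that $G$ is a Gröbner basis is formal. Let $N$ be the span of the $G$-non-reducible monomials in $Z^*$. We must show $\pi|_N$ is an isomorphism onto $\U_3^+(\K)$. For surjectivity: given $u\in\U_3^+(\K)$, pick $m$ with $u\in\U_3^m(\K)$; by Proposition \ref{gbl} there is a $G_m$-non-reducible polynomial in $\K\langle Z_m^*\rangle$ mapping to $u$, and its support consists of $G$-non-reducible monomials, so $u\in\pi(N)$. For injectivity: if $f\in N$ and $\pi(f)=0$, then $\supp(f)$ lies in some $Z_m^*$ and consists of $G_m$-non-reducible monomials, $f\in\ker(\pi_m)$, and Proposition \ref{gbl} forces $f=0$. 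Finally, $G$ is reduced: each $g\in G$ lies in some $G_m$, and by Proposition \ref{gbl} every monomial of $g$ is non-reducible with respect to $G_m\setminus\{g\}$; since non-reducibility of a monomial in $Z_m^*$ with respect to $G\setminus\{g\}$ coincides with non-reducibility with respect to $G_m\setminus\{g\}$ (same divisibility-of-leading-monomials argument), $g$ is non-reducible with respect to $G\setminus\{g\}$.

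The only point requiring care — and the one I would state explicitly — is the compatibility claim that a monomial $w\in Z_m^*$ is $G$-non-reducible iff it is $G_m$-non-reducible; everything else is bookkeeping. This is immediate once one checks that the leading monomial of every element of $G$ listed in Proposition \ref{gbl} involves only the variables actually appearing in it, so that if $\lm(g)$ divides a word in $Z_m^*$ then $g\in\K\langle Z_m^*\rangle$, i.e. $g\in G_m$. Hence no new reductions appear when passing from $G_m$ to $G$, and the direct limit argument goes through.
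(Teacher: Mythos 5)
Your proof is correct, but it reaches the conclusion by a different mechanism than the paper. The paper invokes Theorem~\ref{groebner}: it reduces the question to showing that every critical pair of the rewriting system $R=\{r(g)\mid g\in G\}$ is reducible, observes that any single critical pair involves only finitely many generators and hence lives inside some $Z_m^*$, and then quotes Proposition~\ref{gbl} together with the trivial inclusion $R_m\subseteq R$ (reducibility with respect to $R_m$ implies reducibility with respect to $R$). You instead verify the definition of Gr\"obner basis directly by a direct-limit argument on the spans of non-reducible monomials. The price you pay is the compatibility lemma you rightly isolate --- that a monomial of $Z_m^*$ is $G$-non-reducible iff it is $G_m$-non-reducible, which requires checking that $\lm(g)\in Z_m^*$ forces $g\in\K\langle Z_m^*\rangle$ (the hypothesis of Proposition~\ref{subsystem}); the paper's critical-pair route sidesteps this because enlarging the rule set can only help reducibility of a critical pair. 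What your route buys is twofold: you do not need Theorem~\ref{groebner} (and hence you explicitly address why the order on $Z^*$ is artinian, a hypothesis the paper's proof uses silently), and you actually prove the \emph{reduced} part of the statement, which the paper's proof omits entirely. Both arguments ultimately rest on Proposition~\ref{gbl}, so the overall strategy of reduction to the finite level is shared.
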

\begin{proof}
	It is clear that $G\subset \ker(\pi)$. Denote by $R$ the rewriting
	system $\left\{\, r(p) \,\middle|\, p\in G \right\}$. It is enough to show that   
	any critical pair $(w,r_1,r_2)$, with $r_1$, $r_2\in R$ is reducible.
	For a given critical pair $(w,r_1,r_2)$ there is an $m\ge 0$, such that
	all monomials in $w$, $r_1$, $r_2$ lie in $Z_m$. By
	Proposition~\ref{gbl} the set $G_m$ is a Gr\"obner basis, therefore any
	critical pair $(w,r_1,r_2)$ with $w\in Z_m^*$, $r_1$, $r_2\in
	R_m=\left\{\, r(p) \,\middle|\, p\in G_m \right\}$ is reducible.
\end{proof}

\section{Anick resolution}
\label{Anickres}

The Anick resolution was introduced in~\cite{Anick}. Let $A$ be an algebra over
a field $\K$ and $\varepsilon\colon A\to \K$ a homomorphism of algebras. Let $X =
a_1$,\dots be a set of generators of $A$ and $G\subset \K\left\langle X^*
\right\rangle$ a reduced Gr\"obner basis with respect to a monomial ordering
$\le$ on
$X^*$. For this set of data Anick constructed a free resolution of $\K$ over
$A$, which is nowadays called Anick resolution. We will describe only the first
four steps of Anick's construction under additional assumption that
$\varepsilon(x)=0$ for all $x\in X$.

First we define sets $T_k$, $k=-1,0,1,2$, that will serve as bases of $A$-free
modules $P_k$. Denote by $T_{-1}$ the set $\left\{ e \right\}$ with one element
$e$ and by  $T_0$ the set $X$.   
The set $T_1$ is the set of all leading monomials in $G$.
 Denote by $\widetilde{T}_2$ the set of all possible overlaps of
 elements of $T_1$. Every element of $\widetilde{T}_2$ is a triple
 $(w,r_1,r_2)$. We say that an overlap $(w,r_1,r_2)$ is minimal if there is no
 overlap $(w',r'_1,r'_2)$ such that $w'$ is a subword of  $w$. Note that if an overlap
 $(w,r_1,r_2)$ is minimal then the rules $r_1$ and $r_2$ are uniquely determined
 by $w$. In fact, suppose that $(w,r_1,r_2)$ $(w,r'_1,r'_2)\in
 \widetilde{T}_2$. Then $w=m_1v=m'_1v'$. But this means that $m_1$ is a subword
 of $ m'_1$
 or $m'_1$ is a subword of $ m_1$. Since $G$ is a reduced Gr\"obner basis it follows that
 $r_1=r'_1$. Similarly  $r_2=r'_2$. 
 
 Define $T_2$ to be the set of all words $w$ in
 $X^*$ such that there exists a  minimal overlap $(w,r_1,r_2)$.  Denote for $k=-1$, $0$, $1$, $2$
 by $P_k$ the $A$-linear span of $T_k$. Let $M$ be the set of all
non-reducible monomials with respect to $G$. Then for $k=-1,0,1,2$ the set
$$
N_k = \left\{\, m.t \,\middle|\,  m\in M,\ t\in T_k \right\}
$$
is the basis of $P_k$ over $\K$.

The sets $N_k$ have a full ordering induced by the ordering $\le$ on $X^*$ via
the map $m.t\mapsto mt$. We define maps $\delta_n\colon P_n\to P_{n-1}$ and
$j_n\colon P_{n-1}\to P_n$ as follows
\begin{align*}
	\delta_0(m.x)& := NF(mx,G).e  \\ j_0(ux.e)& := u.x\\
	\delta_1(m.t)&:= NF(mt',G).x,\mbox{ where $t=t'x$}. 
\end{align*} 
Now let $m\in M$ and $x\in X$. Suppose there are  $u$, $v\in M$ such that
$m=uv$ and $vx\in T_1$. Then we define $j_1(m.x)= u.vx$. Otherwise we let
$j_1(m.x)=0$. Note that $j_1$ is well-defined as $m=uv=u'v'$ would imply that
$v\preceq v'$ 
or $v'\preceq v$ and therefore $vx\preceq v'x$ or $v'x\preceq v'x$. But since
$G$ is reduced Gr\"obner basis any two different  elements of $T_1$ are
incompatible with respect to $\preceq$ (in other words $T_1$ is an anti-chain in
the Anick's terminology). 

Let $w\in T_2$ be such that $w=m_1v=um_2$ with $m_1$, $m_2\in T_1$. Define
$\delta_2(m.w)= NF(mu,G).m_2$. 

Suppose $t\in T_1$ and $m\in M$. If $m=uv$ for some $u$, $v\in M$ such that
$vt\in T_2$ then we define $j_2(m.t)=u.vt$. Note that if such $u$ and $v$ exist
then they are unique as $G$ is a reduced Gr\"obner basis. If there is no
$u$ and $v$ with the above property then we let $j_2(m.t)=0$. 

Note, that if $A$ is an $S$-graded algebra, where $S$ is a monoid, then the maps
$j_n$ and $\delta_n$ are homomgeneous with respect to the induced grading on the
modules $P_k$.

Now we define homomorphisms of left $A$-modules $d_n\colon P_n\to P_{n-1}$ and
homomorphisms of $\K$-vector spaces $i_n\colon \ker(d_{n-1})\to P_n$ for $n=0,1,2$ by
induction. Since $d_n$ is a homomorphism of free $A$-modules it is
enough to define $d_n$ on the basis elements $.t$, where $t\in T_n$.
On the other hand $i_n$ is a homomorphism of $\K$ vector spaces,
moreover we do not have any convenient basis for $\ker(d_{n-1})$. We
will define $i_n$ by induction on the leading term of $f\in
\ker(d_{n-1})$.
\begin{align*}
	d_0 (.t)& := \delta_0(.t)\\
	i_0 (m.e)& := j_0(m.e)\\
	d_{n+1} (.t)&:= \delta_{n+1}(.t) - i_nd_n(\delta_{n+1}(.t)) \\
	i_n(f) &:= j_n(\lt(f)) + i_n(f-d_n(j_n(\lt(f)))).
\end{align*}
 Note that it is not obvious that $d_n$ and $i_n$ are
 well-defined. This a part of the claim of Proposition~\ref{Anick}.
The following proposition is proved in~\cite{Anick}. Note that Anick~\cite{Anick}
constructed modules $P_n$ and maps $d_n$ for all $n\in \N$. 
\begin{proposition}
	\label{Anick} The sequence of left $A$-modules 
	$$
	P_2 \stackrel{d_2}{\longrightarrow} P_1 \stackrel{d_1}{\longrightarrow}
	P_0 \stackrel{d_0}{\longrightarrow} P_{-1}   \stackrel{\varepsilon}{\longrightarrow}\K \to 0
	$$
	is an exact complex. 
	If $A$ is an $S$-graded algebra, then all the maps above are
	homogeneous. Moreover, the highest term in the expansion $d_n\left( .t
	\right)$ is $\delta_n\left( .t \right)$ for every $n$ and $t\in T_n$. 
\end{proposition}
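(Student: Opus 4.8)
The plan is to follow Anick's argument and produce a (non-$A$-linear) contracting homotopy; the maps $i_n$ in the excerpt are exactly its pieces, defined only on cycles, and the whole content is the single identity $d_ni_n=\mathrm{id}$ on $\ker(d_{n-1})$. Concretely I would prove by induction on $n\in\{0,1,2\}$ the package of statements: (a) $d_{n-1}d_{n-2}=0$, so that the correction term $i_{n-1}\bigl(d_{n-1}(\delta_n(.t))\bigr)$ occurring in the definition of $d_n$ is legitimate; (b) $d_n(.t)=\delta_n(.t)+\bigl(\text{a sum of basis elements with leading term below }\delta_n(.t)\bigr)$ for every $t\in T_n$; (c) the recursion defining $i_n$ terminates, so $i_n$ is well defined; (d) $d_ni_n=\mathrm{id}$ on $\ker(d_{n-1})$; (e) $d_{n-1}d_n=0$; (f) $\ker(d_{n-1})=\im(d_n)$. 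Granting (b) and (d), claim (f) is immediate ($i_n$ is a $\K$-linear section of $d_n$ over the cycles), (e) gives $\im(d_n)\subseteq\ker(d_{n-1})$, and, together with the base data $d_0=\delta_0$, $i_0=j_0$, and $\varepsilon d_0=0$ (because $\varepsilon$ kills every generator) together with $\ker\varepsilon=\im d_0$ (the augmentation ideal is spanned by nontrivial non-reducible monomials, each of which is $d_0(m'.x)$ for $m=m'x$), one obtains the asserted exact complex.

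Two technical engines drive this. The first is an \emph{artinian/triangularity} argument: because the $\Deg$-grading has finite homogeneous components and $G$ is reduced (so that $m.t\mapsto mt$ is injective on each $N_k$), the order on $N_{-1},N_0,N_1,N_2$ is a well-order, every element of the relevant modules has a well-defined $\lt$, and descending chains terminate. This legitimises $i_n(f)=j_n(\lt(f))+i_n\bigl(f-d_n(j_n(\lt(f)))\bigr)$ \emph{provided} one shows $\lt\bigl(f-d_n(j_n(\lt(f)))\bigr)<\lt(f)$ for every nonzero $f\in\ker(d_{n-1})$. The second engine is \emph{combinatorial}: writing $\lt(f)=c\,m.t$ with $m\in M$, $t\in T_{n-1}$, one must show $j_n(m.t)\ne0$ and that $d_n\bigl(j_n(m.t)\bigr)$ again has leading term $c\,m.t$. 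The nonvanishing of $j_n(m.t)$ says precisely that the leading monomial $mt$ of an $(n-1)$-cycle must support an Anick $n$-chain. For $n=1$ this is the familiar Gröbner-basis fact: if $f\in\ker(d_0)$ then a representative of it lies in $\ker(\pi)$, its leading monomial $mt$ would survive all reductions of smaller monomials if it were itself non-reducible, contradicting $NF=0$; hence $mt=u\ell$ with $\ell\in T_1$ a proper suffix of it and $j_1(m.t)=u.\ell\ne0$. For $n=2$ one argues that the leading monomial of a $1$-cycle, being (the monomial of) a syzygy among the relations, must contain an overlap of two leading terms of $G$, hence a minimal overlap from $T_2$; reduced-ness guarantees that such a minimal overlap determines its two rules uniquely, as already noted in the excerpt.

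Granting the two lemmas the induction runs as follows. $d_n$ is well defined by (a) at level $n-1$. Claim (b): $\delta_n(.t)$ is a basis element $m.t'$, and $d_{n-1}(\delta_n(.t))$ is, by the leading-term property (b) at level $n-1$ applied termwise and the order-compatibility of $\delta_{n-1}$, a cycle whose image under $i_{n-1}$ has leading term strictly below $m.t'$ inside $P_{n-1}$; hence the correction is lower order. Claim (e): $d_{n-1}d_n(.t)=d_{n-1}\delta_n(.t)-d_{n-1}i_{n-1}\bigl(d_{n-1}\delta_n(.t)\bigr)=0$ by (d) at level $n-1$ applied to the cycle $d_{n-1}\delta_n(.t)$. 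Claim (c) is the artinian termination above. Claim (d) is induction on $\lt(f)$: $d_ni_n(f)=d_nj_n(\lt(f))+d_ni_n\bigl(f-d_nj_n(\lt(f))\bigr)=d_nj_n(\lt(f))+\bigl(f-d_nj_n(\lt(f))\bigr)=f$, using (e) to keep $f-d_nj_n(\lt(f))$ a cycle and the triangularity lemma to lower $\lt$. Finally, homogeneity: $NF(\cdot,G)$ preserves $\deg$ because $G$ is a homogeneous subset (reductions preserve degree), so $\delta_n$ and $j_n$ are homogeneous on the graded bases $N_k$, and $d_n,i_n$, being assembled from them by degree-preserving operations, are homogeneous as well; the last assertion of the Proposition is exactly (b).

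The step I expect to be the main obstacle is the pair of combinatorial lemmas of the second paragraph together with the leading-term bookkeeping in (b): that the leading monomial of an $(n-1)$-cycle genuinely contains an Anick $n$-chain, and that in the recursions defining $d_n$ and $i_n$ \emph{every} correction term is strictly lower order. This is precisely where one must use that $G$ is a \emph{reduced} Gröbner basis (so $T_1$ is an antichain and minimal overlaps determine their rules) and that the order is artinian, and where compatibility between "chopping off a leading monomial" and the left $A$-module structure has to be checked carefully; everything else is formal manipulation of the recursions above.
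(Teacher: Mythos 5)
The paper does not actually prove this proposition --- it is quoted verbatim from Anick's article, with the proof deferred to \cite{Anick} --- and your sketch is a faithful reconstruction of Anick's argument: the inductive package $d_{n-1}d_n=0$, $d_n i_n=\mathrm{id}$ on $\ker(d_{n-1})$, the triangularity of $d_n$ and $i_n$ with respect to the well-order on the bases $N_k$, and the combinatorial lemma that the leading monomial of an $(n-1)$-cycle must support an Anick $n$-chain (which is where reducedness of $G$ and artinianity of the order enter). So the proposal is correct and takes essentially the same approach as the cited proof.
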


Let $j<m$. We will consider the algebra $\U_3^{km}\left( \K \right)$ with the
generating set $X_{jm}$ and the Gr\"obner basis $G_{jm}$. The sets $T_1$ and
$T_2$ in this case are given by
\begin{align*}
	T_1 &= \left\{\, a_lb_k,\ b_la_k, a_la_k,\ b_lb_k  \,\middle|\, j\le
	k<l\le m-1 \right\}\\&\phantom{:=} \cup \left\{\, a_k^p,\ b_k^p, \left( b_ka_k
	\right)^p \,\middle|\, j\le k\le m-1 \right\}\\ &\phantom{:=} \cup \left\{\, b_k^2a_k,
	b_ka_k^2
	\,\middle|\, j\le k\le m-1, p\ge 3 \right\}
\end{align*}
\begin{align*}
	T_2 &= \left\{\, 
	\begin{matrix}
	a_ra_la_k, a_ra_lb_k, a_rb_la_k, a_rb_lb_k,\\
	b_ra_la_k,b_ra_lb_k, b_rb_la_k, b_rb_lb_k\end{matrix} \,\middle|\, j\le k<l<r\le m-1
	\right\} \\& \phantom{=}
	\cup \left\{\,\begin{matrix} a_la_k^p, a_lb_k^p, a_l\left( b_ka_k \right)^p, b_la_k^p,
	b_lb_k^p, b_l\left( b_ka_k \right)^p,\\[1ex] a_l^pa_k, a_l^pb_k, b_l^pa_k,
	b_l^pb_k, \left( b_la_l \right)^pa_k, \left( b_la_l
	\right)^pb_k\end{matrix}\,\middle|\,j\le k<l\le m-1 \right\}
	\\&\phantom{=}
	\cup \left\{\,\begin{matrix} a_lb_k^2a_k, a_lb_ka_k^2, b_lb_k^2a_k,
		b_lb_ka_k^2,\\[1ex]
		b_l^2a_la_k, b_l^2a_lb_k, b_la_l^2a_k, b_la_l^2b_k\end{matrix} \,\middle|\, j\le
	k<l\le m-1, p\ge 3 \right\}
	\\&\phantom{=}\cup 
	\left\{\, a_k^{p+1}, b_k^{p+1}, b_k\left( b_ka_k \right)^p, \left(
	b_ka_k
	\right)^pa_k, \left( b_ka_k \right)^{p+1}\,\middle|\, j\le k\le m-1
	\right\} \\&\phantom{=} \cup \left\{\, b_k^2a_k^2 \,\middle|\, j\le k\le
	m-1, p\ge 3 \right\}.
\end{align*}
In the tables below we list the values of $\deg$ for the elements of $T_1$ and
$T_2$.
$$
\begin{array}{cc|cc|cc}
w\in 	T_1 & \deg\left( w \right) & w\in T_1 & \deg\left( w \right) & w\in
T_1,\ p\ge 3 & \deg\left( w \right)\\
	\hline
	a_lb_k & p^l\alpha + p^k \beta & a_k^p & p^{k+1} \alpha & b_k^2a_k & p^k
	\alpha + 2p^k \beta \\
	b_l a_k & p^k \alpha + p^l \beta & b_k^p & p^{k+1}\beta & b_ka_k^2 &
	p^k\beta + 2p^k\alpha
	\\
	a_la_k & \left( p^l+p^k \right)\alpha & \left( b_ka_k \right)^p
	& p^{k+1}\alpha + p^{k+1}\beta\\
	b_lb_k & \left( p^l + p^k \right)\beta
\end{array}
$$
For $j\le k<l<r\le m-1$
$$
\begin{array}{cc|cc}
	w\in T_2 & \deg\left( w \right) & w\in T_2 & \deg\left( w \right)
\\
\hline
a_ra_la_k & \left( p^r+p^l + p^k \right)\alpha & b_ra_la_k & (p^l+ p^k)\alpha +
p^r \beta \\
a_ra_lb_k & \left( p^r + p^l \right)\alpha + p^k\beta & b_ra_lb_k & p^l\alpha +
\left(  p^r + p^k 
\right) \beta\\
a_rb_l a_k & \left( p^r + p^k  \right) \alpha + p^l \beta & b_rb_l a_k & p^k
\alpha +  \left( p^r + p^l 
 \right)\beta\\
 a_rb_lb_k & p^l \alpha + \left( p^l + p^k  \right) \beta & b_rb_lb_k & \left(
 p^r +p^l + p^k \right)\beta
\end{array}
$$
For $j\le k<l\le m-1$
$$
\begin{array}{cc|cc}
w\in T_2 & \deg\left( w \right) & w\in T_2 & \deg\left( w \right)
\\
\hline
a_la_k^p & \left( p^l + p^{k+1} \right)\alpha & a_l^pa_k & \left( p^{l+1} + p^k 
\right)\alpha \\
a_lb_k^p & p^l\alpha + p^{k+1}\beta & a_l^p b_k & \left( p^{l+1} \right)\alpha +
p^k \beta \\
a_l\left(b_ka_k \right)^p & \left( p^l + p^{k+1} \right)\alpha + p^{k+1}\beta &
b_l^p a_k & p^k \alpha + p^{l+1}\beta \\
b_l a_k^p & p^{k+1}\alpha + p^l \beta & b_l^p b_k & \left( p^{l+1}+ p^k
\right)\beta \\
b_lb_k^p & \left( p^l + p^{k+1} \right) \beta & \left( b_la_l \right)^pa_k &
\left( p^{l+1}+ p^k  \right)\alpha + p^{l+1}\beta \\
b_l\left( b_ka_k \right)^p &  p^{k+1}\alpha + \left( p^{k+1} + p^l \right)\beta
& \left( b_la_l \right)^p b_k & p^{l+1}\alpha + \left( p^{l+1} + p^k \right)
\beta
\end{array}
$$
For $j\le k\le m-1$
$$
\begin{array}{cc|cc}
w\in T_2 & \deg\left( w \right) & w\in T_2 & \deg\left( w \right)	
\\
\hline
a_k^{p+1} & \left( p^{k+1} + p^k \right) \alpha & b_k\left( b_ka_k \right)^p &
p^{k+1}\alpha + \left( p^{k+1} + p^k \right)\beta
\\ b_k^{p+1} & \left( p^{k+1} + p^k \right)\beta & 
\left( b_ka_k \right)^p a_k & \left( p^{k+1} + p^k \right)\alpha + p^{k+1}\beta
\\& & \left( b_ka_k
\right)^{p+1} & \left( p^{k+1} + p^k \right)\left( \alpha + \beta \right)
\end{array}
$$
For $p\ge 3$
$$
\begin{array}{cc|cc}
	w\in T_2 & \deg\left( w \right) & w\in T_2 & \deg\left( w \right)	
\\
\hline
a_lb_k^2a_k & \left( p^l + p^k \right)\alpha + 2p^k \beta & b_l^2 a_la_k &
\left( p^l + p^k \right)\alpha + 2p^l \beta \\
a_lb_ka_k^2 & \left( p^l + 2p^k \right)\alpha + p^k \beta & b_l^2 a_lb_k & p^l
\alpha + \left( 2p^l + p^k \right)\beta \\
b_lb_k^2a_k & p^k \alpha + \left( 2p^k + p^l \right)\beta & 
b_l a_l^2 a_k & \left( 2p^l + p^k \right)\alpha + p^l \beta\\
b_lb_ka_k^2 & 2p^k\alpha + \left( p^l + p^k \right)\beta & b_la_l^2b_k & 
2p^l\alpha + \left( p^l + p^k \right)\beta\\
b_k^2 a_k^2 & 2p^k\alpha + 2p^k \beta
\end{array}
$$
Analizing these tables we get
\begin{proposition}
	\label{matches} The set $W$ of pairs $\left( w_1,w_2 \right)\in T_1\times
	T_2$ such that $\deg\left( w_1 \right) = \deg\left( w_2 \right)$ is given by 
	\begin{multline*}	
		\left\{\,\begin{matrix}\left( a_lb_k, a_lb_{k-1}^p \right),
			\left( a_lb_k,
			a_lb_{k-1}^p \right), \left( \left( b_ka_k \right)^p,
			a_{k+1}b_k
			\right), \left( b_la_k, b_la_{k-1}^p \right),\\[1ex]
			\left( \left( b_ka_k
			\right)^p, b_{k+1}a_k \right), \left( b_lb_k,
			b_lb_{k-1}^p \right),
			\left( a_{l+1}a_k, a_l^pa_k \right), \left( a_{l+1}b_k,
		a_l^pb_k
		\right),\\[1ex] \left( b_{l+1}a_k, b_l^pa_k \right), \left(
		b_{l+1}b_k,
		b_l^pb_k \right), \left( a_{k+1}a_k, a_{k}^{p+1} \right), \left(
		b_{k+1}b_k, b_k^{p+1}
		\right)\end{matrix}  \, \right\}
		\\ \cup 
		\left\{\,\left( a_k^2, a_ka_{k-1}^2 \right), \left( b_k^2,
		b_kb_{k-1}^2
		\right), \left( a_{l+1}b_l,
		a_l\left( b_{l-1}a_{l-1} \right)^2, \left( b_{l+1}a_l, b_l\left(
		b_{l-1}a_{l-1} \right)^2\right)
		\right) \,\middle|\, p=2 \right\}.
	\end{multline*}
	\end{proposition}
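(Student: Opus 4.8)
The statement is a purely combinatorial assertion about a finite list of degrees, so the plan is to reduce it to a bounded check on base-$p$ expansions of integers. First I would record that all degrees lie in the free commutative monoid $\N\alpha\oplus\N\beta$, so that $\deg(w)$ is faithfully encoded by the pair of non-negative integers $(c_\alpha(w),c_\beta(w))$ giving its coefficients of $\alpha$ and of $\beta$; these are read off directly from the tables preceding the statement. Two elements of $T_1\cup T_2$ have equal degree iff both coordinates agree, so the problem splits into matching $\alpha$-coordinates and $\beta$-coordinates simultaneously. I would also invoke the involution $a_k\leftrightarrow b_k$ of $\U_3^+(\K)$, induced by the automorphism $e_\alpha^{(k)}\mapsto e_\beta^{(k)}$, $e_\beta^{(k)}\mapsto e_\alpha^{(k)}$, $e_{\alpha+\beta}^{(k)}\mapsto(-1)^k e_{\alpha+\beta}^{(k)}$ used earlier: it preserves $T_1$ and $T_2$ and intertwines $\deg$ with the swap $\alpha\leftrightarrow\beta$, so it suffices to find the matching pairs up to this symmetry, and likewise it is natural to split according to whether $\deg(w)$ lies in $\N\alpha$, in $\N\beta$, or in neither.

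The key observation, for $\chr\K=p\ge 3$, is that every coefficient $c_\alpha(w)$ or $c_\beta(w)$ appearing in the tables is a sum of powers of $p$ in which no single power $p^i$ occurs with multiplicity exceeding $2$ (multiplicity $2$ arising only from shapes such as $a_k^{2}$, $b_k^{2}$, $a_k^{p+1}$, $b_k^{p+1}$, $b_k^2a_k$, $b_ka_k^2$). Since $2<p$, there is no carrying, so the base-$p$ expansion of such a coefficient is literally its multiset of exponents, and $\deg(w_1)=\deg(w_2)$ holds iff the $\alpha$-exponent multisets coincide and the $\beta$-exponent multisets coincide. I would then run through the finitely many shapes in $T_1$: the $\alpha$-exponent multiset of $w_1$ is one of a handful of possibilities ($\emptyset$, $\{k\}$, $\{l\}$, $\{k+1\}$, $\{l,k\}$, or $\{k,k\}$ for $p\ge3$), and similarly for its $\beta$-part, and each such multiset is matched against the $\alpha$- and $\beta$-exponent multisets of the shapes in $T_2$, retaining only those index assignments compatible with the standing inequalities $j\le k<l<r\le m-1$. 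These inequalities are precisely what forces the index shifts in the statement — for instance, the only way the $\alpha$-side $\{l\}$ of $a_lb_k$ can meet the $\alpha$-side $\{l\}$ of an element $a_lb_{k-1}^p$ of $T_2$ is with the $\beta$-exponent dropping from $k$ to $k-1$. Carrying this out yields exactly the first displayed set of pairs.

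For $p=2$ one extra phenomenon appears: a coefficient $2p^j$ now carries to $p^{j+1}$, and moreover $a_j^2=a_j^p$, $b_j^2=b_j^p$, $(b_ja_j)^2=(b_ja_j)^p$, so the quadratic shapes in $T_2$ collapse onto shapes with a single power in the affected slot. Repeating the exponent-multiset analysis while now allowing exactly one carry detects the additional coincidences $\deg(a_k^2)=p^{k+1}\alpha=\deg(a_ka_{k-1}^2)$, its $a\leftrightarrow b$ mirror $\deg(b_k^2)=p^{k+1}\beta=\deg(b_kb_{k-1}^2)$, and $\deg(a_{l+1}b_l)=p^{l+1}\alpha+p^l\beta=\deg\bigl(a_l(b_{l-1}a_{l-1})^2\bigr)$ together with its mirror; these are exactly the four pairs in the $p=2$ clause. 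Taking the union of the two cases gives $W$ as stated.

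The statement is conceptually straightforward once the base-$p$ uniqueness observation is in place; the main obstacle is simply the bookkeeping, since there are on the order of a dozen shapes in $T_1$ and several dozen in $T_2$, and each must be compared coordinate by coordinate while tracking the index inequalities. The $a\leftrightarrow b$ symmetry and the $\N\alpha$ / $\N\beta$ / mixed trichotomy cut the number of genuinely distinct cases down substantially, but one must still be careful, in the case $p=2$, not to overlook a match that a single carry has hidden.
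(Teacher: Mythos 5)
Your proposal is correct and follows essentially the same route as the paper, which simply computes the tables of $\deg(w)$ for $w\in T_1$ and $w\in T_2$ and asserts the result by inspection ("analizing these tables"). Your additional observation — that for $p\ge 3$ every coefficient is a sum of powers of $p$ with digits at most $2<p$, so degree equality reduces to equality of exponent multisets with no carrying, while for $p=2$ exactly one carry $2p^j=p^{j+1}$ can occur — is a welcome refinement that makes the exhaustiveness of the finite check rigorous rather than merely asserted, and it correctly isolates the four extra pairs in the $p=2$ clause.
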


\section{Minimal resolution}
\label{firststeps}
Let $\Gamma$ be a monoid, and $A$ an $\Gamma$-graded algebra over a field $\K$. For
$\gamma\in\Gamma$ we denote by $A[\gamma]$ the left $\Gamma$-graded $A$-module defined by
$$
A\left[ \gamma \right]_{\alpha} := \bigoplus_{\beta\in\Gamma\colon \beta\gamma=
\alpha}A_{\beta}. 
$$
The modules $A\left[ \gamma \right]$ are projective modules in the category of
$S$-graded $A$-modules, and every projective module is a direct summand of a
direct sum of modules of the form $A\left[ \gamma \right]$ ( cf.
\cite[Propositions~4.1, 5.1]{semiperfect}
). 

Let $M$ be a $\Gamma$-graded $A$-module. A submodule $N$ of $M$ is called
\emph{small} if for any submodule $T$ of $M$ such that $T+N = M$, we have
$T=M$. 
A \emph{projective cover} of a module $M$ is a projective object $P$ together
with an epimorphism $\psi\colon P\twoheadrightarrow M$ such that the kernel of
$\psi$ is a small subobject of $P$. It is well known fact that if projective
cover exists for the module $M$, the it is unique (cf. \cite[Theorem~5.1]{semiperfect}). 
It is proved in Proposition~5.2 of \cite{perfect}, that if $\Gamma$ is an
artinian ordered monoid such that the neutral element $0$ of $\Gamma$ is the minimal
element, and $A_0$ is a perfect ring, then every $\Gamma$-graded $A$-module
possess a projective cover. Note that the conditions of this criterion hold
for $\U_3^+\left( \K \right)$ considered as an $S$-graded algebra, where
$S$ is the free commutative monoid generated by $\alpha$ and $\beta$. In fact,
$S$ is artinian with respect to the lexicographical ordering, and $0$ is the
minimal element with respect to this ordering. Moreover, the $0$-th component of
$\U_3^+\left( \K \right)$ is isomorphic to $\K$ and thus is perfect. 

A projective resolution 
$$
\dots \to P_k \stackrel{d_{k}}{\longrightarrow}\dots \stackrel{d_2} P_1
\stackrel{d_{1}}{\longrightarrow} P_0 \stackrel{d_0}{\longrightarrow}
P_{-1} \stackrel{d_{-1}}{\longrightarrow} M \to 0
$$
of a $\Gamma$-graded $A$-module $M$ is called minimal if 
the kernel of $d_j$ is a small subobject of $P_j$, $j\ge -1$, or, equivalently,
if the image of $d_i$ is a small subobject of $P_{i+1}$ for $i\ge 0$. 
From the uniqueness of the projective cover, it follows that the minimal
resolution is unique up to isomorphism if it exists. Moreover, if every
$\Gamma$-graded $A$-module possess a  projective cover, then the existence of a
minimal projecitve resolution for every $\Gamma$-graded $A$-module can be proved
by induction. 

\begin{proposition}
	Let $\left( s_i \right)_{i\in I}$ be a family of elements in $S$. Denote
	by  $N_i$ the direct sum
	$\bigoplus_{s\in S, s\not=0} \U_3^+\left( \K \right)[s_i]_s$. Then
	$N\subset \bigoplus_{i\in I} \U_3^+\left( \K
	\right)[s_i] $ is a small submodule  if and only if $M\subset \bigoplus_{i\in I} N_i$.
\end{proposition}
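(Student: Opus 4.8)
The plan is to reduce the statement to a characterization of smallness that is purely about the degree-zero behaviour, using the fact (recorded just before the proposition) that $\U_3^+(\K)$ is $S$-graded over an artinian monoid $S$ whose neutral element is minimal, and whose $0$-component is $\K$, hence perfect. The first step is to identify $\bigoplus_{i\in I}\U_3^+(\K)[s_i]$ with a free graded module $P$ and to note that the submodule $\bigoplus_{i}N_i$ is exactly the submodule $P_{>0}$ spanned by all homogeneous components of positive degree, i.e. the radical-type submodule $\U_3^+(\K)_{>0}\cdot P$ plus the components $\U_3^+(\K)[s_i]_s$ with $s\neq 0$; more precisely $\bigoplus_i N_i$ is the image of the graded Jacobson radical acting on $P$ together with all strictly positive shifts. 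I would make this identification explicit and observe that $P/\bigoplus_i N_i$ is a direct sum of copies of $\K$ concentrated in the degrees $s_i$.

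Next I would prove the ``if'' direction: if $M\subseteq\bigoplus_i N_i$, then $M$ is small. Suppose $T+M=P$ for some graded submodule $T$. Projecting to $P/\bigoplus_i N_i$ and using $M\subseteq\bigoplus_i N_i$, the image of $T$ is all of $P/\bigoplus_i N_i$, so $T$ hits every generator $.e_i$ modulo $\bigoplus_i N_i$; then a standard graded Nakayama argument — valid because $S$ is artinian with $0$ minimal and $\U_3^+(\K)_0=\K$ — shows $T=P$. Concretely, one runs an induction on degree: if $x\in P_\gamma$ is homogeneous, write $x=t+n$ with $t\in T$, $n\in\bigoplus_i N_i$, and rewrite $n$ as a $\U_3^+(\K)_{>0}$-combination of lower-degree elements of $P$ plus elements of the positive shifts; by induction those lower-degree elements lie in $T$, forcing $x\in T$. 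The artinian hypothesis guarantees the induction is well-founded.

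For the ``only if'' direction I would argue by contrapositive: if $M\not\subseteq\bigoplus_i N_i$, then some homogeneous element of $M$ has a nonzero component in degree $0$ of some summand $\U_3^+(\K)[s_i]$, i.e. $M$ contains an element whose image in $P/\bigoplus_i N_i$ is nonzero. Pick the submodule $T$ generated by all $.e_j$ for $j\neq i$ together with $M$; one checks $T+\bigoplus_i N_i\supseteq$ the generator $.e_i$ modulo $\bigoplus_i N_i$, hence by the Nakayama argument above $T+M=P$, yet $T\neq P$ because $T$ maps into a proper submodule of $P/\bigoplus_i N_i$ — it misses the copy of $\K$ in degree $s_i$ unless that copy is already supplied by $M$, which it is only if $M\not\subseteq\bigoplus_i N_i$. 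So $M$ is not small. I expect the main obstacle to be bookkeeping: making the identification of $\bigoplus_i N_i$ with the ``positive part'' of $P$ completely precise in the graded setting, and being careful that the graded Nakayama lemma is being invoked under exactly the hypotheses (artinian $S$, minimal $0$, perfect $\U_3^+(\K)_0$) recalled in the paragraph preceding the proposition, rather than a version requiring finiteness. Once that infrastructure is in place both implications are short.

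Note: the statement as printed conflates the letters $M$ and $N$ (it defines $N_i$, writes $N\subset\bigoplus_i\U_3^+(\K)[s_i]$, then says ``if and only if $M\subset\bigoplus_i N_i$''); in the writeup I would fix notation, taking a single graded submodule $M$ of $P=\bigoplus_{i\in I}\U_3^+(\K)[s_i]$ and proving that $M$ is small in $P$ if and only if $M\subseteq\bigoplus_{i\in I}N_i$.
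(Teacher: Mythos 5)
The paper states this proposition without giving any proof, so there is nothing to compare your argument against; judged on its own merits, your outline is the right one but the ``only if'' half contains a concrete error. Your reading of the (typo-ridden) statement is correct: $\bigoplus_{i}N_i$ is the submodule $\Rad(P)=\U_3^+(\K)_{>0}\cdot P$, where $\U_3^+(\K)_{>0}=\bigoplus_{s\neq 0}\U_3^+(\K)_s$, the quotient $P/\Rad(P)$ is a direct sum of copies of $\K$ placed in the degrees $s_i$, and your ``if'' direction is exactly the graded Nakayama induction: the order on $S=\N\alpha\oplus\N\beta$ is well founded with only finitely many elements below any given one, and every homogeneous element of $\Rad(P)$ of degree $\gamma$ is a $\U_3^+(\K)_{>0}$-combination of homogeneous elements of $P$ of degree strictly below $\gamma$, so $T+M=P$ with $M\subseteq\Rad(P)$ forces $P_\gamma\subseteq T$ by induction on $\gamma$. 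That half is fine.

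In the ``only if'' direction, however, you take $T$ to be the submodule generated by the canonical generators $g_j$ of the summands $\U_3^+(\K)[s_j]$, $j\neq i$, \emph{together with} $M$. Then $M\subseteq T$, so $T+M=T$, and your two assertions ``$T+M=P$'' and ``$T\neq P$'' contradict one another: the Nakayama step you invoke shows precisely that this $T$ is all of $P$, so it cannot witness the failure of smallness. The repair is simply to leave $M$ out of $T$. If $M\not\subseteq\Rad(P)$, choose a homogeneous $x\in M$ with nonzero image in $P/\Rad(P)$, say with nonzero component in the summand $\U_3^+(\K)[s_i]/N_i\cong\K$; necessarily $\deg(x)=s_i$. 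Put $T:=\sum_{j\neq i}\U_3^+(\K)g_j$ (or, if you prefer to avoid any further appeal to Nakayama, $T:=\Rad(P)+\sum_{j\neq i}\U_3^+(\K)g_j$). Writing $x=\sum_j x_j$ with $x_j\in\U_3^+(\K)g_j$, the element $x-\sum_{j\neq i}x_j=x_i$ is a nonzero scalar multiple of $g_i$, so $g_i\in T+M$ and hence $T+M=P$; on the other hand $T$ maps into the proper subspace $\bigoplus_{j\neq i}\U_3^+(\K)[s_j]/N_j$ of $P/\Rad(P)$, so $T\neq P$ and $M$ is not small. With that single change both implications hold.
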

We will denote the submodule  $\bigoplus_{i\in I} N_i$ of $P:= \bigoplus_{i\in I}
\U_3^+\left( \K \right)[s_i]$, with $N_i$ defined as above, by $\Rad\left( P
\right)$. 
\begin{corollary}
	The Anick resolution of $\U_3^+\left( \K \right)$ 
	with respect to the Gr\"obner basis $G$ gives two first steps of the
	minimal resolution for the trivial module $\K$. 
\end{corollary}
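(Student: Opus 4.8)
The plan is to show that the initial segment
$$
P_0 \stackrel{d_0}{\longrightarrow} P_{-1} \stackrel{\varepsilon}{\longrightarrow} \K \to 0
$$
of the Anick complex coincides with the beginning of the minimal resolution of $\K$; throughout I write $A=\U_3^+(\K)$. By Proposition~\ref{Anick} this is a complex of graded free $A$-modules with $P_{-1}=A$ and $P_0=\bigoplus_{z\in Z}A[\deg z]$, all differentials being homogeneous, and by uniqueness of projective covers it suffices to verify that $\ker\varepsilon$ is a small submodule of $P_{-1}$ and that $\ker d_0$ is a small submodule of $P_0$. The first holds trivially: $\ker\varepsilon=\bigoplus_{s\neq 0}A_s$ is the augmentation ideal $\Rad(A)$, which is precisely $\Rad(P_{-1})$ and hence small by the Proposition preceding this corollary. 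So the whole problem is the inclusion $\ker d_0\subseteq\Rad(P_0)$, where $\Rad(P_0)=\bigoplus_{z\in Z}\Rad(A).z$.

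To establish it I would first make $d_0$ explicit. Since $d_0(a.z)=NF(az,G).e$ for $z\in Z$, the map $d_0$ is, under $P_{-1}=A$, the homomorphism $\bigoplus_{z\in Z}A.z\to A$, $\sum_z a_z.z\mapsto\sum_z a_z\bar z$, where $\bar z$ denotes the image of $z$ in $A$. Thus what must be shown is that every relation $\sum_z a_z\bar z=0$ in $A$ has all coefficients $a_z$ in the augmentation ideal $\Rad(A)$. Decomposing $a_z=\lambda_z+a_z'$ with $\lambda_z\in\K$ and $a_z'\in\Rad(A)$, such a relation gives $\sum_z\lambda_z\bar z\in\Rad(A)^2$, so the claim reduces to the assertion that the family $(\bar z)_{z\in Z}$ is linearly independent modulo $\Rad(A)^2$; equivalently, that the generating set $Z$ of Theorem~\ref{genset} is a minimal one.

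This assertion I would prove by a grading argument. For each $l\ge 0$ the component $A_{p^l\alpha}$ is one-dimensional, spanned by $\bar a_l=e_\alpha^{(p^l)}$, since $e_\alpha^{(p^l)}$ is the unique element of the Poincar\'e--Birkhoff--Witt basis $\overline{\mathbb B}$ of degree $p^l\alpha$. On the other hand $\Rad(A)^2\cap A_{p^l\alpha}=0$: an element of $\Rad(A)^2$ of degree $p^l\alpha$ is a linear combination of images of words $z_1\cdots z_r\in Z^*$ with $r\ge 2$ and $\sum_i\deg z_i=p^l\alpha$; as the total degree has no $\beta$-part, every $z_i$ is some $a_{j_i}$ and $\sum_i p^{j_i}=p^l$, which, since $r\ge 2$, forces all $j_i<l$; but then some index $j$ occurs at least $p$ times, for otherwise $\sum_i p^{j_i}\le\sum_{m=0}^{l-1}(p-1)p^m=p^l-1$, and the image of such a word vanishes because the $a_j$ commute (Proposition~\ref{commute}) and $a_j^p=0$ (Proposition~\ref{squares}). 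Replacing $\alpha$, $a_l$ by $\beta$, $b_l$ gives $\Rad(A)^2\cap A_{p^l\beta}=0$ in the same way. Projecting a hypothetical relation $\sum_z\mu_z\bar z\in\Rad(A)^2$ onto the one-dimensional components of degree $p^l\alpha$ and $p^l\beta$ then forces each $\mu_{a_l}$ and $\mu_{b_l}$ to vanish, which is the required linear independence; hence $\ker d_0\subseteq\Rad(P_0)$, and $P_{-1}$, $P_0$ together with $\varepsilon$ and $d_0$ are forced to be the first two steps of the minimal resolution.

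I expect the core of the proof to be exactly the vanishing $\Rad(A)^2\cap A_{p^l\alpha}=0$ and its $\beta$-counterpart; the rest is formal, using only that the Anick resolution is graded free and the description of small submodules recalled above. It is worth emphasising that the third differential $d_2$ is in general \emph{not} part of the minimal resolution: the discrepancy is governed precisely by the set $W$ of equal-degree pairs in $T_1\times T_2$ determined in Proposition~\ref{matches}, and removing it is what the subsequent sections are devoted to.
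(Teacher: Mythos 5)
Your proof is correct, and it reaches the decisive inclusion $\ker d_0\subseteq\Rad(P_0)$ by a genuinely different route from the paper. The paper verifies the equivalent condition $\im d_1\subseteq \Rad(P_0)$ (equivalent by exactness of the Anick complex at $P_0$) by inspecting the differential itself: for $w=ux\in T_1$ one has $d_1(.w)=u.x-i(NF(w,G))$, the word $u$ is nonempty, and $NF(w,G)$ contains no terms of length one, so every coefficient already lies in the augmentation ideal. In other words, the paper reads the statement off the reduced Gr\"obner basis $G$, all of whose elements involve only words of length at least two. You instead prove directly that $Z$ is a minimal generating set: your identification of $d_0$ with $\sum_z a_z.z\mapsto\sum_z a_z\bar z$, the reduction to linear independence of the $\bar z$ modulo $\Rad(A)^2$, the observation that $A_{p^l\alpha}$ is one-dimensional, and the counting argument showing $\Rad(A)^2\cap A_{p^l\alpha}=0$ (a word $a_{j_1}\cdots a_{j_r}$ of degree $p^l\alpha$ with $r\ge 2$ must repeat some index at least $p$ times, hence vanishes by Propositions~\ref{commute} and~\ref{squares}) are all sound, including the degenerate case $l=0$ where no such word exists. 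The paper's argument is shorter because it leans on the already-computed basis $G$ and on exactness at $P_0$; yours is more self-contained, avoids invoking the explicit form of $d_1$, and isolates the conceptually useful fact that $Z$ is a minimal generating set, which is exactly what guarantees that $P_0$ carries no redundant free summands. Your closing remark that $d_2$ fails to be minimal precisely because of the equal-degree pairs of Proposition~\ref{matches} is also consistent with the subsequent sections of the paper.
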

\begin{proof}
	We have to check that the images of $d_1$ and $d_0$ are small suobjects
	of $P_0$ and $P_{-1}$, respectively. For $d_0$, this is obvious, as
	$d\left( .x \right)= x.e$ for any $x\in X$, and $x.e \in \U_3^+\left(
	\K \right)_{\deg\left( x \right)}$, $\deg\left( x \right)\not=0$.  

	Now, let $w\in T_1$. We write $w$ in the form $u.x$, where $x\in X$. Then
	$$
	d_1\left( .w \right) = u.x -i\left( NF\left( w,G \right) \right).
	$$
	Since, $NF\left( w,G \right)$ does not contain the terms of length one,
	we see that $i\left( NF\left( w,G \right) \right)$ is an element of
	$\bigoplus_{x\in X}\bigoplus_{s\not=0} \U_3^+\left( \K \right)[\deg\left( y
	\right)]_{s}$. 
\end{proof}
Unfortunately, the third step of the Anick resolution, in our situation, does
not give the third step of the minimal resolution, since $P_1$ contains redudant
summands. To find out which summands of $P_1$ should be skipped, we will analyze
the differential $d_2\colon P_2\to P_1$. 

First of all, note that if $w\in T_2$ is such that $\deg\left( w \right)$ is
different from the degrees of elements in $T_1$, then 
$d_2\left( w \right)\in \Rad\left( P_1 \right)$. 

Thus, the only elements $w\in T_2$, for which it can happen that $d_2\left( .w
\right)\not\in\Rad\left( P_1 \right) $, are given by the second components of
the pairs in $W$ defined in Proposition~\ref{matches}.

Now, suppose $w\in T_2$ and $u_1$, \dots, $u_k\in T_1$ is the full set of
elements in $T_1$ such that $\deg\left( w \right) = \deg\left( u \right)$. Then
$d_2\left( .w \right)$ is an element of $\Rad\left( P_1 \right)$ if and only if
coefficients of $.u_1$, \dots, $.u_k$ in the expansion of $d_2\left( .w
\right)$ are zero. 
\begin{lemma}
	\label{zero}
	Suppose $w\in T_2$ and $u\in T_1$ are such that $\deg\left( w
	\right) = \deg\left( u \right)$ and $u<w$. Then the coefficient of
	$.u$ in the expansion of $d_2(.w)$ is zero.  
\end{lemma}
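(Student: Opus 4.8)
The plan is to exploit the structure of the Anick differential $d_2$ recalled just above. By Proposition~\ref{Anick}, the highest term (with respect to the monomial order $\ll$, transported to the basis $N_1$ via $m.t\mapsto mt$) in the expansion of $d_2(.w)$ is $\delta_2(.w)$, and all other terms are strictly smaller. Write $w=m_1v=um_2$ with $m_1,m_2\in T_1$; then $\delta_2(.w)=NF(mu,G).m_2$ with $m=e$, i.e.\ $\delta_2(.w)=NF(u,G).m_2$. Since $u$ is a proper prefix of $w$, it is already non-reducible, so in fact $\delta_2(.w)=u.m_2$, which corresponds to the monomial $um_2=w$ under the identification above. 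Thus the leading monomial of $d_2(.w)$, as an element of $P_1$ with its induced order, is exactly $w$ itself.

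Now suppose $u\in T_1$ has $\deg(u)=\deg(w)$ and $u<w$. The term $.u$ in $P_1$, written as a $\K$-basis element of the form $e.u$, corresponds to the monomial $u$. Since the whole differential $d_2(.w)$ is supported on monomials $\le w$, and its top monomial is $w\ne u$, every basis element $m'.u'$ with $m'u'=u$ that could contribute to the coefficient of $.u$ would have to appear in $d_2(.w)$ with monomial $u<w$. That by itself is consistent with $u$ appearing; so the argument must be sharper. The key point is the homogeneity: $d_2$ is $S$-graded (Proposition~\ref{Anick}), so $d_2(.w)$ lies in the degree-$\deg(w)$ component of $P_1$. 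The basis elements of $P_1$ in degree $\deg(w)$ that involve the generator $.u$ are precisely $m'.u$ with $\deg(m')+\deg(u)=\deg(w)$, hence (since $\deg(u)=\deg(w)$) $\deg(m')=0$, forcing $m'=e$. So the only way $d_2(.w)$ can have a nonzero component "along $.u$" is through the single basis element $e.u$, whose associated monomial is $u$. Therefore the coefficient of $.u$ equals the coefficient of the monomial $u$ in $d_2(.w)$.

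It remains to show that $u$ does not lie in the support of $d_2(.w)$. I would argue by tracking the recursion $d_2(.w)=\delta_2(.w)-i_1d_1(\delta_2(.w))$. The term $\delta_2(.w)=e.w$ contributes only the monomial $w$, which is $\ne u$. The correction term $i_1d_1(\delta_2(.w))$ is a $\K$-linear combination of basis elements of $P_1$ whose monomials are $<w$ and which lie in degree $\deg(w)$; I need to see that none of them equals $e.u$. For this, unwind $d_1(e.w)=w.x - i_0d_0(\dots)$ where $w=w'x$, and then note that $i_1$, built from $j_1$, produces basis elements $m'.t$ with $m'\ne e$ except possibly when the tail segment of a non-reducible monomial is itself a leading monomial; a case check against the explicit list of $T_1$ and $T_2$ in Proposition~\ref{matches}, using that $u<w$ are of the same degree, rules out $e.u$ occurring. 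I expect this final monomial-bookkeeping — confirming that the only degree-$\deg(w)$, order-$<w$ basis element that could a priori be $e.u$ never actually appears — to be the main obstacle, and it is essentially a finite verification over the pairs $(u,w)$ enumerated in $W$; everything else is the soft grading-plus-leading-term argument above.
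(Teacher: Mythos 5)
Your first two paragraphs are sound and essentially reproduce the paper's argument, with the useful extra observation (left implicit in the paper) that homogeneity forces the only $\K$-basis element of $P_1$ that can contribute to the coefficient of $.u$ to be $e.u$, whose associated monomial is $u$ itself. The trouble is the hypothesis you are then left to handle. The inequality in the statement is a typo: it should read $w<u$, not $u<w$. This is the hypothesis the paper's own proof uses (``Since $w<u$, we have $v'.v''<u$'') and the one under which the lemma is applied afterwards (``Since $a_{k+1}b_{k+1}>a_{k+1}b_k^p$, it follows from Lemma~\ref{zero} that \dots''). With $w<u$ your reduction finishes immediately: every monomial in the support of $d_2(.w)$ is $\le w<u$, so $e.u$ cannot occur and the coefficient of $.u$ vanishes.

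Your third paragraph --- the deferred ``finite verification'' that the monomial $u$ never appears in $d_2(.w)$ when $u<w$ --- is a genuine gap, and it cannot be closed, because the claim is false in that direction. Take $w=a_{k+1}b_k^p$ and $u=\left(b_ka_k\right)^p$: both have degree $p^{k+1}\alpha+p^{k+1}\beta$, and $\left(b_ka_k\right)^p<a_{k+1}b_k^p$ in the $\Deg$-lexicographic order since $b_k<a_{k+1}$; yet the proposition immediately following the lemma computes that the coefficient of $.\left(b_ka_k\right)^p$ in $d_2\left(.a_{k+1}b_k^p\right)$ equals $-1$. So the fix is not more bookkeeping but correcting the hypothesis to $w<u$, at which point your soft grading-plus-leading-monomial argument already constitutes a complete proof and the anticipated case analysis is unnecessary.
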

\begin{proof}
	Recall that the $\K$-basis of $P_1$ is given by $B:=\left\{\, v'.v''
	\,\middle|\, NF\left( v',G \right)=v', v''\in T_1 \right\}$. 
	This basis is ordered via the map $B\to X^*$, $v'.v''\mapsto v'v''$.
	Now, write $w$ in the form $v'v''$ with $v''\in T_2$ and $v'$
	non-reducible word in $X^*$. Then the maximal element in the expansion
	of $d_2\left( .w \right)$ is $\delta_2\left( .w \right) = v'.v''$. Since
	$w<u$, we have $v'.v''<u$. Therefore, the coefficient of $.u$
	 in the expansion of $d_2\left( .w \right)$ is zero. 
\end{proof}
\begin{corollary}
	The images of $.a_lb_k^{p-1}$, $.b_la_{k}^p$ for $l\ge k+2$, of
	$.a_la_k^{p}$, $.b_lb_k^{p}$, $.a_l^pa_k$, $.a_l^pb_k$, $.b_l^pa_k$,
	$.b_l^pb_k$ for $l\ge k+1$, of  $.a_k^{p+1}$, $.b_k^{p+1}$ for any
	$k$, and of $.a_ka_{k-1}^2$, $.b_kb_{k-1}^2$, $.a_k\left(
	b_{k-1}a_{k-1}
	\right)^2$, $.b_k\left( b_{k-1}a_{k-1} \right)^2$ for $p=2$ under
	$d_2$ are the elements $\Rad\left( P_1 \right)$. 
\end{corollary}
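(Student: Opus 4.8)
The plan is to reduce the whole statement to Lemma~\ref{zero}. Recall from the paragraph preceding the Corollary that, since $d_2$ is homogeneous (Proposition~\ref{Anick}), $d_2(.w)$ lies in $\Rad(P_1)$ exactly when the coefficient of $.u$ in $d_2(.w)$ vanishes for every $u\in T_1$ with $\deg(u)=\deg(w)$ (and if there is no such $u$, nothing has to be checked). By Proposition~\ref{matches} this list of $u$'s is read straight off $W$: it consists of the first components of the pairs of $W$ whose second component is $w$. So for each $w$ in the Corollary one obtains a short, explicit list of candidates, for instance $a_lb_{k+1}$ for $w=a_lb_k^p$, $b_la_{k+1}$ for $w=b_la_k^p$, $a_{l+1}a_k$ for $w\in\{a_la_k^p,\,a_l^pa_k\}$, $a_{l+1}b_k$ for $w=a_l^pb_k$, $a_{k+1}a_k$ for $w=a_k^{p+1}$, and, when $p=2$, $a_k^p$ for $w=a_ka_{k-1}^2$ and $a_{k+1}b_k$ for $w=a_k(b_{k-1}a_{k-1})^2$, together with the evident $b$-analogues of all of these.

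The next step is, for each pair $(w,u)$ so obtained, to check that $w<u$ in the $\Deg$-lexicographic order on $Z_m^*$. Since $\deg(u)=\deg(w)$ the two words have the same value of $\Deg$ and are therefore compared purely lexicographically with respect to $a_0<b_0<a_1<b_1<\dots$; moreover two distinct words of the same $\Deg$ cannot have one a prefix of the other, so this comparison is unambiguous. In every case $w$ and $u$ first disagree at a letter carrying a strictly smaller index in $w$ than in $u$ ($b_k$ versus $b_{k+1}$, $a_k$ versus $a_{k+1}$, or $a_l$ versus $a_{l+1}$, according to the family), so indeed $w<u$. Now one is in the situation of Lemma~\ref{zero}: by definition $\delta_2(.w)=NF(q,G).m$, where $w=qm$ is the factorisation coming from the overlap that defines $w$ (with $m\in T_1$), and in each of our cases the prefix $q$ is a non-reducible word, so $\delta_2(.w)=q.m$, which the order-preserving map $m'.t\mapsto m't$ sends to $w$ itself; by Proposition~\ref{Anick} every term of $d_2(.w)$ is therefore $\le w$, whereas each $m''.u$ (with $m''$ possibly empty) is sent to a word $m''u\ge u>w$, so no such term can occur in $d_2(.w)$ and the coefficient of $.u$ is zero. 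Running this over all relevant $u$ gives $d_2(.w)\in\Rad(P_1)$; the few cases with $l=k+1$ and $p\ge 3$, in which there is simply no element of $T_1$ of degree $\deg(w)$, are disposed of by the remark preceding the Corollary.

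The argument is conceptually routine, and the only real work — and the only place where a slip is likely — is the bookkeeping in the first step: going through the roughly dozen families in the statement and listing all elements of $T_1$ of the prescribed degree without missing any. The delicate points are: the identity $2p^k=p^{k+1}$, valid precisely for $p=2$, which produces the extra coincidences responsible for the ``$p=2$'' entries of the list (for example $\deg(a_ka_{k-1}^2)=\deg(a_k^p)$ and $\deg(a_{l+1}b_l)=\deg(a_l(b_{l-1}a_{l-1})^2)$); the value $\deg((b_ja_j)^p)=p^{j+1}(\alpha+\beta)$, which is why $w=a_lb_k^p$ and $w=b_la_k^p$ must be restricted to $l\ge k+2$ (for $l=k+1$ the matching element of $T_1$ is $(b_ka_k)^p$, which is smaller than $w$, so Lemma~\ref{zero} yields nothing and those $w$ are deliberately excluded); and the strict inequalities $k<l$, $k<l<r$ built into the descriptions of $T_1$ and $T_2$. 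Once the candidate $u$'s are correctly pinned down, both the order comparison and the appeal to Lemma~\ref{zero} are immediate.
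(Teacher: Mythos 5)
Your proposal is correct and follows the same route the paper intends: the corollary is an immediate consequence of Lemma~\ref{zero} (in the form actually established by its proof, namely that $w<u$ forces the coefficient of $.u$ in the expansion of $d_2(.w)$ to vanish) combined with Proposition~\ref{matches} and the degree remark preceding the statement. One bookkeeping slip in your illustrative list: the element of $T_1$ matching $w=a_la_k^{p}$ in degree is $a_la_{k+1}$ (for $l\ge k+2$), not $a_{l+1}a_k$, since $\deg\left(a_la_k^{p}\right)=\left(p^l+p^{k+1}\right)\alpha$ while $\deg\left(a_{l+1}a_k\right)=\left(p^{l+1}+p^k\right)\alpha$; the correct partner still satisfies $w<u$ (the words first differ at the second letter, where $a_k<a_{k+1}$), so the conclusion is unaffected, and the analogous correction applies to $b_lb_k^{p}$.
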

Note that for the elements $b_{k+1}a_{k}^p$ and $a_{k+1}b_{k}^p$ of $T_2$ there are two
elements in $T_1$ of the same degree, namely
$a_{k+1}b_{k+1}$ and $\left( b_ka_k \right)^p$. Since, $a_{k+1}b_{k+1}>
a_{k+1}b_{k}^p$, it follows from Lemma~\ref{zero} that the coefficient of
$.a_{k+1}b_{k+1}$ in the expansion of $d_2\left( .a_{k+1}b_k^{p} \right)$ is
zero. 
\begin{proposition}
	The coefficient of $.a_{k+1}b_{k+1}$ in the expansion of $d_2\left(
	.b_{k+1}a_k^p
	\right)$ is zero. The coefficients of $.\left( b_ka_k \right)^p$ in the
	expansion of $d_2\left( .a_{k+1}b_k^p \right)$ and in the expansion of
	$d_2\left( .b_{k+1}a_k^p \right)$ equal $-1$. 
\end{proposition}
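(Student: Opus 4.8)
The plan is to compute $d_2$ explicitly on the two basis elements $.a_{k+1}b_k^p$ and $.b_{k+1}a_k^p$ of $P_2$, keeping track only of those monomials whose image can possibly land outside $\Rad(P_1)$, i.e.\ the coefficients of $.a_{k+1}b_{k+1}$ and $.(b_ka_k)^p$. Recall from Proposition~\ref{Anick} that $d_2(.w) = \delta_2(.w) - i_1 d_1 \delta_2(.w)$, where $\delta_2(.w)$ is the leading term; for $w = a_{k+1}b_k^p$ we have $w = a_{k+1}\cdot b_k^p$ with $b_k^p\in T_1$, so $\delta_2(.w) = a_{k+1}.b_k^p$, and similarly $\delta_2(.b_{k+1}a_k^p) = b_{k+1}.a_k^p$. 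Neither leading term contributes to the coefficients we care about (their $T_1$-parts are $b_k^p$ and $a_k^p$, not $a_{k+1}b_{k+1}$ or $(b_ka_k)^p$), so the whole contribution comes from $-i_1 d_1 \delta_2(.w)$. The first step is therefore to compute $d_1(a_{k+1}.b_k^p)$ and $d_1(b_{k+1}.a_k^p)$: by definition $d_1(m.t) = NF(mt',G).x$ adjusted by $-i_0 d_0$ of that, but since $d_0\circ d_1 = 0$ on the nose here, effectively we must reduce $a_{k+1}b_k^p$ and $b_{k+1}a_k^p$ to normal form using the rewriting system $R$ attached to $G_m$, and read off which reductions were applied.

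The combinatorial heart is the normal-form computation. For $a_{k+1}b_k^p$: the only applicable rule to the subword $a_{k+1}b_k$ is \eqref{skewone}, giving $a_{k+1}b_k^p \to b_k a_{k+1} b_k^{p-1} - (-1)^{k+1-k} a_k^{p-1}b_k a_k a_k^{p-1}\cdots b_k^{p-1}$, i.e.\ a term of the form $b_k(\cdots)$ plus a term beginning with $a_k^{p-1}b_ka_k$ which already has lower index and continues to reduce. One then iterates, pushing the $a_{k+1}$ rightwards through the remaining $b_k$'s; the crucial bookkeeping is which monomials of the fully-reduced result have leading $T_1$-factor equal to $(b_ka_k)^p$ or $a_{k+1}b_{k+1}$. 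The point of the proposition is that $i_1$ of the reduced expression, decomposed via the basis $\{v'.v''\}$ of $P_1$, picks up the basis element $.(b_ka_k)^p$ with coefficient exactly $-1$ in each case, and picks up $.a_{k+1}b_{k+1}$ with coefficient $0$ in the case $w = b_{k+1}a_k^p$ (the $a_{k+1}b_k^p$ case already being handled by Lemma~\ref{zero} as noted). For $b_{k+1}a_k^p$ the relevant rule is \eqref{skewtwo}, $b_{k+1}a_k \to a_k b_{k+1} + (-1)\,b_ka_kb_k^{p-1}\cdots b_k^{p-1}$ — the second term literally begins with $b_ka_k$, and after reducing the trailing $b_k^{p-1}\cdots$ against \eqref{beta} and the remaining $a_k^{p-1}$ factor, one obtains a monomial of degree $(p-1)p^k$ in $\beta$ fewer whose normal form involves $(b_ka_k)^p$; tracking the sign through \eqref{skewtwo} and the iteration gives the coefficient $-1$.

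I would organize the write-up as: (i) observe $\delta_2$ contributes nothing, reducing everything to $-i_1 d_1$ of the leading term; (ii) apply \eqref{skewone}/\eqref{skewtwo} once and split into a ``carry'' term (index $k+1$ still present, continues reducing) and a ``residue'' term (all indices $\le k$, in $\U_3^{k+1}(\K)$); (iii) argue by the degree tables in Section~\ref{Anickres} that the carry term, once fully reduced, cannot produce $(b_ka_k)^p$ or $a_{k+1}b_{k+1}$ with the right degree except through one explicit monomial, and compute that monomial's coefficient; (iv) for the $a_{k+1}b_{k+1}$ coefficient in $d_2(.b_{k+1}a_k^p)$, use that after one application of \eqref{skewtwo} the only surviving $a_{k+1}$-containing monomial is $a_k b_{k+1}\cdot(\text{stuff of index }\le k)$, whose reduction keeps $b_{k+1}$ to the left of $a_k$-powers but never creates the pattern $a_{k+1}b_{k+1}$, forcing that coefficient to be $0$. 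The main obstacle I anticipate is step (iii): controlling the recursion in \eqref{skewone}/\eqref{skewtwo} — each application spawns a long product $a_k^{p-1}b_ka_ka_{k+1}^{p-1}\cdots$ (resp.\ with $b$'s), and one must check that none of the \emph{intermediate} reductions of these long words, nor the interaction with the remaining $b_k^{p-1}$ (resp.\ $a_k^{p-1}$) tail, secretly contributes an extra copy of $(b_ka_k)^p$. I expect this to come down to a degree/leading-monomial argument using Proposition~\ref{Anick} (the highest term of $d_1$ is $\delta_1$) together with the explicit degree tables, so that at most one monomial of the correct bidegree $p^{k+1}\alpha+p^{k+1}\beta$ can appear with leading $T_1$-factor $(b_ka_k)^p$, and its coefficient is pinned down by a single sign, $(-1)^{k+1-k}=-1$.
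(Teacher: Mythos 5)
Your skeleton is the same as the paper's: $\delta_2$ contributes nothing to the two coefficients in question, so everything reduces to tracking $i_1$ applied to $d_1$ of the leading term, peeling off one $j_1$-image at a time; and your argument for the vanishing of the $.a_{k+1}b_{k+1}$ coefficient (every monomial produced after one application of \eqref{skewtwo} is strictly below $a_{k+1}b_{k+1}$ in the ordering) is essentially the paper's. The problem is that the heart of the proposition --- the value $-1$ of the coefficient of $.\left( b_ka_k \right)^p$ --- is left as a plan in your step (iii), and the plan as stated would not deliver the number. A degree/leading-monomial argument can at best show that \emph{at most one} monomial of bidegree $p^{k+1}\alpha+p^{k+1}\beta$ with leading $T_1$-factor $\left( b_ka_k \right)^p$ survives; it cannot tell you its coefficient. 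Moreover your proposed source of the sign, the factor $\left( -1 \right)^{l-k}=-1$ in \eqref{skewone} with $l=k+1$, is not where the coefficient comes from. In the actual computation the term $b_k^{p}.a_{k+1}$ dies because $b_k^p=0$, leaving $-b_k^{p-1}a_k^{p-1}b_k.a_k$, and the coefficient of $.\left( b_ka_k \right)^p$ is then the coefficient of the maximal monomial $\left( b_ka_k \right)^{p-1}$ in the normal expansion of $b_k^{p-1}a_k^{p-1}$ --- a nontrivial scalar that requires its own argument (Lemma~\ref{lemma1}), and similarly the coefficient of $\left( b_ka_k \right)^{p-1}b_k$ in $b_{k+1}a_k^{p-1}$ for the other case.

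The paper avoids the recursion you are worried about altogether. Instead of iterating \eqref{skewone}/\eqref{skewtwo} in the generators $a_k,b_k$, it observes that $a_{k+1}b_k^{p-1}$, $b_{k+1}a_k^{p-1}$ and $b_k^{p-1}a_k^{p-1}$ live in known homogeneous components of the subalgebra generated by $a_k,b_k,a_{k+1},b_{k+1}$, writes down explicit monomial bases of those components (so the normal form is a priori a combination of finitely many listed monomials, with $\left( b_ka_k \right)^{p-1}$, respectively $\left( b_ka_k \right)^{p-1}b_k$, maximal among them), and then computes the one needed coefficient by passing back to the divided-power presentation and the single relation \eqref{betaalpha}: in the sum $\sum_j\left( -1 \right)^j e_{\alpha}^{\left( p-1-j \right)}e_{\alpha+\beta}^{\left( j \right)}e_{\beta}^{\left( p-1-j \right)}$ only the $j=p-1$ term, $\left( -1 \right)^{p-1}e_{\alpha+\beta}^{\left( p-1 \right)}=-\left( -1 \right)^{p-1}\left( ab-ba \right)^{p-1}$, can contribute the monomial $\left( ba \right)^{p-1}$. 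If you want to complete your version, you would either have to reproduce this divided-power computation or find some other way to evaluate these two scalars; the iteration of the small Gr\"obner rules plus degree bookkeeping alone does not suffice.
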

\begin{proof}
	We have
	\begin{align*}
		d_2\left( .a_{k+1}b_k^p \right)
		= a_{k+1}.b_k^p -i \left( a_{k+1}d_1\left( .b_k^p \right)
		\right).
	\end{align*} 
	It is easy to check that $d_1\left( .b_k^p \right) = b_k^{p-1}.b_k$. To
	compute $i\left( a_{k+1}b_k^{p-1}.b_k \right)$, we have to find the
	leading term in the normal form of $a_{k+1}b_k^{p-1}$.
	
	Note that the elements $a_{k+1}$ and $b_k$ lie in the subalgebra
	$U_3^{k,k+1}\left( \K \right)$ of $\U_3^+\left( \K \right)$, and the
	homogeneous component of degree $p^{k+1}\alpha + (p-1)p^k\beta$ of
	$\U^{k,k+1}_3\left( \K \right)$ has the basis
	$$
	{a_{k+1}b_k^p} \cup \left\{\, a_k^{p-j}\left( b_ka_k
	\right)^jb_{k}^{p-j-1} \,\middle|\, 0\le j\le p-1
	\right\}. 
	$$

	We get from Proposition~\ref{gbl} and the above remark 
	\begin{align*}
			a_{k+1} b_k^{p-1} & =  b_ka_{k+1}b_k^{p-2} +
				a_k^{p-1}b_ka_kb_k^{p-1}\\& = \dots =
				b_k^{p-1}a_{k+1} +
					\sum_{j=1}^p \lambda_j a_k^{p-j} \left(
				b_ka_k
					\right)^jb_k^{p-j}
				\end{align*}
		for some coefficients $\lambda_j\in \K$. We see that  $b_k^{p-1}a_{k+1}$
	is the maximal term in the expansion of $a_{k+1}b_k^{p-1}$.
	Since $j\left(b_k^{p-1}a_{k+1}.b_k \right) = b_k^{p-1}.a_{k+1}b_k$ we
	get
	\begin{align*}
		d_2\left( .a_{k+1}b_k^{p} \right) &= a_{k+1}.b_k^p -
		b_k^{p-1}.a_{k+1}b_k \\&\phantom{=} +i\left( b_k^{p-1}a_{k+1}.b_k -
		b_k^{p-1}d_1\left( .a_{k+1}b_k \right) + \sum_{j=1}^p \lambda_j
		a_k^{p-j} \left( b_ka_k \right)^j b_k^{p-1-j}.b_k\right).
	\end{align*}
	We have
	\begin{align*}
		d_1\left( .a_{k+1}b_k \right) = a_{k+1}.b_k - b_k.a_{k+1} +
		a_k^{p-1}b_k.a_k.
	\end{align*}
	Therefore
	\begin{align*}
		b_k^{p-1}a_{k+1}.b_k -
		b_k^{p-1}d_1\left( .a_{k+1}b_k \right) &= b_k^p.a_{k+1} -
		b_k^{p-1}a_k^{p-1}b_k.a_k = -b_k^{p-1}a_k^{p-1}b_k.a_k.
	\end{align*}
Since $b_k$ and $a_k$ are the elements of $U_3^{k,k}\left( \K \right)$, the
element $b_k^{p-1}a_k^{p-1}$ is a linear combination of the elements in
$\left\{\, a_k^{p-1-j}\left( b_ka_k \right)^j b_k^{p-1-j} \,\middle|\, 0\le j\le
p-1\right\}$. Note that $\left( b_ka_k \right)^{p-1}$ is maximal among them.
\begin{lemma}
	\label{lemma1}
	The coefficient of $\left( b_ka_k \right)^{p-1}$ in the normal expansion
	of $b_k^{p-1}a_k^{p-1}$ is $1$.
\end{lemma}
\begin{proof}
	Since $\U_3^{k,k}\left( \K \right)$ is isomorphic to $\U_3^{0,0}\left(
	\K \right)$ via $F_k$, and $F_k\left( a_0 \right) = a_k$, $F_k\left(
	b_0 \right) =b_k$, it is enough to prove that the coefficient of
	$\left( ba \right)^{p-1}$ in the normal expansion of $b^{p-1}a^{p-1}$is
	$-1$, where $a:=a_0$ and $b:= b_0$. 

	We have by \eqref{betaalpha}

	\begin{align}
		\label{betaalpha2}
		e_\beta^{\left( p-1 \right)}e_{\alpha}^{\left( p-1 \right)} =
		\sum_{j=0}^p \left( -1 \right)^j e_{\alpha}^{\left( p-1-j
		\right)} e_{\alpha+\beta}^{\left( j \right)}
		e_{\beta}^{\left(p-1-j\right)}.
	\end{align}

	For $0\le j\le p-1$, $e_{\alpha}^{\left( p-1-j
	\right)}e_{\alpha+\beta}^{\left( j \right)} e_{\beta}^{\left( p-1-j
	\right)}$ is a non-zero multiple of $a^{p-1-j}\left( ab-ba
	\right)^jb^{p-1-j}$. Consider 
$a^{p-1-j}\left( ab-ba
	\right)^jb^{p-1-j}$
	as an element of the free algebra $\mathcal{F}$ generated by $a$ and $b$. Then every term in
$a^{p-1-j}\left( ab-ba
	\right)^jb^{p-1-j}$
	is less then $\left( ba \right)^{p-1}$. Since during the Gr\"obner
	reduction process the tems can only decrease, we see that the
	coefficient of $\left( ba \right)^{p-1}$ in the normal expansion of $a^{p-1-j}\left( ab-ba
	\right)^jb^{p-1-j}$ is zero. 

	Recall that $(p-1)!=-1(\mathrm{mod}\  p)$. 
	Thus $e_{\alpha+\beta}^{\left( p-1 \right)} =
	-e_{\alpha+\beta}^{p-1}= - \left( ab-ba \right)^{p-1}$. Again, if we
	consider $\left( ab-ba \right)^{p-1}$ as an element of
	$\mathcal{F}$, the all the terms in $\left( ab-ba \right)^{p-1}$ are
	less then $\left( ba \right)^{p-1}$ except $\left( -1
	\right)^{p-1}\left( ba \right)^{p-1}$. Thus in the normal expansion of
	$\left( ab-ba \right)^{p-1}$
	the term $\left( ba \right)^{p-1}$ enters with the coefficient
	$\left( -1 \right)^{p-1}$. Since the coefficient of
	$e_{\alpha+\beta}^{\left( p-1 \right)}$ in \eqref{betaalpha2} is
	$\left( -1 \right)^p$,  we see that
	the coefficient of $\left( ba \right)^{p-1}$ in the normal expansion of
	$$b^{p-1}a^{p-1} = \left( -e_\beta^{\left( p-1 \right)} \right)\left(
	-e_\alpha \right)^{\left( p-1 \right)} = e_\beta^{\left( p-1
	\right)}e_{\alpha}^{\left( p-1 \right)}$$
	is $-\left( -1 \right)^p\left( -1 \right)^{p-1} = 1$.\end{proof}

Since $\left( b_ka_k \right)^{p-1}b_k.a_k$ is greater then $a_k^{p-j}\left(
b_ka_k
\right)^jb_k^{p-1-j}.b_k$ for any $1\le j\le p$, we see that $\left( b_ka_k
\right)^{p-1}a_k.b_k$
is the maximal term in the normal expansion of 
$$
 b_k^{p-1}a_{k+1}.b_k -
		b_k^{p-1}d_1\left( .a_{k+1}b_k \right) + \sum_{j=1}^p \lambda_j
		a_k^{p-j} \left( b_ka_k \right)^j b_k^{p-1-j}.b_k.
$$
Since $j\left( \left( b_ka_k \right)^{p-1}b_k.a_k \right) = .\left(b_ka_k
\right)^p$, we get
\begin{align*}
	d_2\left( .a_{k+1}b_k^p \right) = a_{k+1}.b_k^p - b_k^{p-1}.a_{k+1}b_k -
	.\left( b_ka_k \right)^{p}  - i\left( \mbox{smaller terms} \right).
\end{align*}
	Now we consider $d_2\left( .b_{k+1}a_k^p \right)$. We have
	\begin{align*}
		d_{2}\left( .b_{k+1}a_k^p \right) & = b_{k+1}.a_k^p - i\left(
		b_{k+1}d_1\left( .a_k^p \right) \right). 
	\end{align*}
	It is easy to check that $d_1\left( .a_k^p \right) = a_k^{p-1}.a_k$.
	Thus
	\begin{align*}
		d_2\left( .b_{k+1}a_k^p \right) = b_{k+1}.a_k^p - i\left(
		b_{k+1} a_k^{p-1}.a_k
		\right).
	\end{align*}
	Now $b_{k+1}a_k^{p-1}$ is an element of $\U_3^{k,k+1}\left( \K
	\right)$ of degree $\left( p-1 \right)p^k \alpha + p^{k+1}\beta$, and
	thus is a linear combination of the elements
	$$
V:= 	\left\{\, a_k^{p-1-j}\left( b_ka_k \right)^jb_k^{p-j} \,\middle|\, 0\le
	j\le p-1\right\} \cup {a_k^{p-1}b_{k+1}}. 
	$$
	Note that for any $v\in V$, we have $va_k<a_{k+1}b_{k+1}$. Therefore,
	$.a_{k+1}b_{k+1}$ enters with the coefficient zero in the expansion of
	$d_2\left( .b_{k+1}a_k^p \right)$. 

	Now $\left( b_ka_k \right)^{p-1}b_k$ is the maximal element of
	$V$.
	\begin{lemma}
		The coefficient of $\left( b_ka_k \right)^{p-1} b_k$ in the
		normal expansion of $b_{k+1}a_k^{p-1}$ is $1$. 
	\end{lemma}
\begin{proof}
		Since $U_3^{k,k+1}\left( \K \right)$ is isomorphic to $U_3^{0,1}\left( \K \right)$ via $F_k$, it is enough to check that the coeffecient of
	$\left( b_0a_0 \right)^{p-1}b_0$ in the normal expansion of
	$b_1a_0^{p-1}$ is $1$. 

We have by \eqref{betaalpha}
\begin{align*}
	e_{\beta}^{\left( p \right)} e_{\alpha}^{\left( p-1 \right)} &= 
	\sum_{j=1}^{p-1}\left( -1 \right)^j e_{\alpha}^{\left( p-j-1 \right)} \left(
	e_{\alpha+\beta}^{\left( j \right)}e_{\beta}^{\left( p-j \right)}
	\right). 
\end{align*}
For $1\le j\le p-2$, $e_{\alpha}^{\left( p-1-j
\right)}e_{\alpha+\beta}^{\left( j \right)}e_{\beta}^{\left( p-j \right)}$ is a
non-zero multiple 
of $$f := a_0^{p-j-1}\left( a_0b_0-b_0a_0 \right)^jb_0^{p-j}.$$
Now the coefficient of $\left( b_0a_0 \right)^{p-1}b_0$ in 
$f$ is zero since all terms of $f$ considered as an element of the free algebra
generated by $a_0$ and $b_0$ are less then $\left( b_0a_0 \right)^{p-1}b_0$. 
For $j=0$, we have
$e_{\alpha}^{\left( p-1 \right)}e_\beta^{\left( p \right)} = - a_0^{p-1}b_1$ and
$a_0^{p-1}b_1< \left( b_0a_0 \right)^{p-1}b_0$. 
For $j=p-1$, we get
\begin{align*}
	e_{\alpha+\beta}^{\left( p-1 \right)}e_\beta = -\left( a_0b_0-b_0a_0
	\right)^{p-1}b_0.
\end{align*}
By the considerations as in Lemma~\ref{lemma1}, we see that the coefficient of
$\left( b_0a_0 \right)^{p-1}b_0$ in 
$\left( a_0b_0-b_0a_0 \right)^{p-1}b_0$ is $\left( -1 \right)^{p-1}$. Therefore
the coefficient of $\left( b_0a_0 \right)^{p-1}b_0$ in 
$$
b_1a_0^{p-1} =  - e_{\beta}^{\left( p \right)}e_{\alpha}^{\left( p-1
\right)}
$$
is $-\left( -1 \right)^{p-1}\left( -\left( -1 \right)^{p-1} \right) = 1$. 
\end{proof}
Since $j\left( \left( b_ka_k \right)^{p-1}a_k.b_k \right) = .\left( b_ka_k
\right)^p$, 
we get
$$
d_2\left( .b_{k+1}a_k \right) = b_{k+1}.a_k - .\left( b_ka_k \right)^p + i\left(
\mbox{smaller terms} \right). 
$$
\end{proof}
\begin{corollary}
	The elements $d_2\left( .b_{k+1}a_k^p \right)+ .\left( b_ka_k \right)^p$
	and $d_2\left( .a_{k+1}b_k^p \right) + .\left( b_ka_k \right)^p$ belong
	to $\Rad\left( P_1 \right)$. 
\end{corollary}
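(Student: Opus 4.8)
The plan is to deduce the statement directly from the coefficient computation of the preceding Proposition, together with the description of $\Rad(P_1)$ recalled before Lemma~\ref{zero}. Recall that $P_1=\bigoplus_{t\in T_1}\U_3^+(\K)[\deg t]$ and that $\Rad(P_1)$ consists of the elements $\sum_{t\in T_1}c_t.t$ whose coefficients $c_t\in\U_3^+(\K)$ all have vanishing constant term; since $d_2$ is homogeneous, for $w\in T_2$ the element $d_2(.w)$ lies in $\Rad(P_1)$ precisely when the (necessarily scalar) coefficient of $.u$ in $d_2(.w)$ vanishes for every $u\in T_1$ with $\deg u=\deg w$.

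First I would record, from the degree tables, that $b_{k+1}a_k^p$ and $a_{k+1}b_k^p$ both have degree $p^{k+1}(\alpha+\beta)$, and --- as noted in the paragraph preceding the Proposition --- that $a_{k+1}b_{k+1}$ and $(b_ka_k)^p$ are exactly the two elements of $T_1$ of that degree. In particular $.(b_ka_k)^p$ is a single basis element of $P_1$, homogeneous of degree $p^{k+1}(\alpha+\beta)$, so adding it to $d_2(.w)$ for either choice of $w$ keeps the element homogeneous of degree $\deg w$ and affects only the coefficient of $.(b_ka_k)^p$, leaving that of $.a_{k+1}b_{k+1}$ unchanged.

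Next I would invoke the preceding Proposition (together with the remark before it, which settles the case of $d_2(.a_{k+1}b_k^p)$ via Lemma~\ref{zero}): in each of $d_2(.b_{k+1}a_k^p)$ and $d_2(.a_{k+1}b_k^p)$ the coefficient of $.a_{k+1}b_{k+1}$ is $0$, while the coefficient of $.(b_ka_k)^p$ equals $-1$. Hence in $d_2(.w)+.(b_ka_k)^p$ the coefficient of $.(b_ka_k)^p$ becomes $-1+1=0$ and that of $.a_{k+1}b_{k+1}$ stays $0$; since these are the only $u\in T_1$ with $\deg u=\deg w$, the criterion of the first paragraph yields $d_2(.w)+.(b_ka_k)^p\in\Rad(P_1)$ for $w=b_{k+1}a_k^p$ and for $w=a_{k+1}b_k^p$, which is the assertion.

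I do not expect any real obstacle: all the genuine work --- the computation of the two coefficients, which in turn rests on Lemma~\ref{zero} and Lemma~\ref{lemma1} --- has already been done in the Proposition, and the corollary is pure bookkeeping. The only point needing a moment's attention is the claim that $a_{k+1}b_{k+1}$ and $(b_ka_k)^p$ exhaust the members of $T_1$ of degree $p^{k+1}(\alpha+\beta)$, which one verifies by a direct scan of the three $T_1$-degree tables (the verification is uniform in $p$, covering both $p=2$ and $p\ge 3$).
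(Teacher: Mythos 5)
Your proof is correct and is precisely the argument the paper leaves implicit: the corollary is immediate from the homogeneity of $d_2$, the identification (via the degree tables and Proposition~\ref{matches}) of the $T_1$-elements of degree $p^{k+1}(\alpha+\beta)$, and the coefficients $0$ and $-1$ computed in the preceding Proposition, so that adding $.\left(b_ka_k\right)^p$ kills the only scalar coefficient. No gaps; your bookkeeping matches the paper's intent exactly.
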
 
Now we modify the Anick resolution in order to make it minimal up to the third
step. We define $T'_1= \left\{\, w\in T_1 \,\middle|\, w\not=\left( b_ka_k
\right)^p \right\}$ and $T'_2=\left\{\, w\in T_2 \,\middle|\, w\not=a_{k+1}b_k^p
\right\}$. Denote by $P'_1$ and $P'_2$ the $A$-linear spans of $T'_1$ and
$T'_2$, respectively. 
We define $d'_1\colon P'_1\to P_0$ to be the restriction of $d_1$ to $P'_1$. To
define $d'_2\colon P'_2\to P'_1$ we proceed as follows. For every $w\in T'_2$,
the expression $d_2\left( .w \right)$ can be written as $A$-linear combination 
$$
\sum_{u\in T'_1}r_u.u + \sum_{k}r_k.\left( b_ka_k \right)^p.
$$
Define $d'_2\left( w \right)$ by the expression
$$
\sum_{u\in T'_1}r_u.u + \sum_{k} r_k\left( d_2\left( .a_{k+1}b_k^p
\right)+.\left( b_ka_k \right)^p \right). 
$$
Then it is obvious that the image of $d'_2$ is a subset of $\Rad\left( P'_1
\right)$.

\begin{proposition}
	The complex
$$
P'_2 \stackrel{d'_2}{\longrightarrow} P'_1 \stackrel{d'_1}{\longrightarrow}
	P_0 \stackrel{d_0}{\longrightarrow} P_{-1}
	\stackrel{\varepsilon}{\longrightarrow}\K \to 0
$$
is exact at the term $P'_1$. 
\end{proposition}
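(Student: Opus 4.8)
The plan is to derive exactness at $P'_1$ from the exactness of the full Anick complex at $P_1$ (Proposition~\ref{Anick}) by a Gaussian-elimination argument. Write $P_1=P'_1\oplus Q$ and $P_2=P'_2\oplus R$, where $Q$ is the $A$-span of the basis elements $.(b_ka_k)^p$ and $R$ the $A$-span of the $.a_{k+1}b_k^p$. The degree tables of Section~\ref{Anickres} give $\deg\bigl((b_ka_k)^p\bigr)=\deg\bigl(a_{k+1}b_k^p\bigr)=p^{k+1}(\alpha+\beta)$, so $Q$ and $R$ are isomorphic as $S$-graded modules, summand by summand. Let $\Pi_Q\colon P_1\to Q$ be the projection along $P'_1$ and put $e:=\Pi_Q\circ d_2|_R\colon R\to Q$, i.e.\ the block of $d_2$ from $R$ to $Q$.

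The key point is that $e$ is an isomorphism. By the Proposition immediately preceding the present statement, the coefficient of $.(b_ka_k)^p$ in $d_2(.a_{k+1}b_k^p)$ equals $-1$, and by its Corollary $d_2(.a_{k+1}b_k^p)+.(b_ka_k)^p\in\Rad(P_1)$. Since $d_2$ preserves the $S$-degree, a summand $r\,.(b_ja_j)^p$ of $d_2(.a_{k+1}b_k^p)$ forces $\deg\bigl((b_ja_j)^p\bigr)=p^{j+1}(\alpha+\beta)$ to be componentwise $\le\deg\bigl(a_{k+1}b_k^p\bigr)=p^{k+1}(\alpha+\beta)$, hence $j\le k$; the term $j=k$ contributes the constant $-1$, while each term $j<k$ has a coefficient of positive degree, i.e.\ in $\Rad(A)$. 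Therefore, in the bases ordered by the index $k$, the homogeneous map $e$ is triangular with $-1$ on the diagonal, hence invertible; its inverse $e^{-1}$ is produced by the obvious recursion on $k$.

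Once $e^{-1}$ is available, put $\sigma:=-e^{-1}\circ\Pi_Q\circ d_2\colon P'_2\to R$. Then $\Pi_Q\bigl(d_2(y+\sigma(y))\bigr)=\Pi_Q(d_2(y))+e(\sigma(y))=0$, so $d_2(y+\sigma(y))\in P'_1$ for every $y\in P'_2$, and $d'_2$ is exactly the map $y\mapsto d_2(y+\sigma(y))$ (this is what the recipe in the text computes, after the back-substitution in the index $k$ that turns the one-step correction into $\sigma$). Both inclusions are now formal. Since $d'_1=d_1|_{P'_1}$, we get $d'_1d'_2(y)=d_1d_2(y+\sigma(y))=0$, so $\im d'_2\subseteq\ker d'_1$. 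Conversely, let $z\in P'_1$ with $d'_1(z)=0$; then $d_1(z)=0$, so by exactness of the Anick complex at $P_1$ (Proposition~\ref{Anick}) there is $y\in P_2$ with $d_2(y)=z$. Write $y=y'+y_R$ with $y'\in P'_2$ and $y_R\in R$; applying $\Pi_Q$ to $z=d_2(y')+d_2(y_R)$ gives $e(y_R)=-\Pi_Q(d_2(y'))$, hence $y_R=\sigma(y')$, and therefore $z=d_2(y'+\sigma(y'))=d'_2(y')$.

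The one substantive step is the isomorphism claim for $e$: everything reduces to checking that the block of $d_2$ from $R$ to $Q$ is triangular with invertible diagonal, which is precisely the degree bookkeeping of Section~\ref{Anickres} (cf.\ Proposition~\ref{matches}) combined with the coefficient $-1$ and the $\Rad(P_1)$-membership furnished by the Proposition and Corollary immediately preceding the present statement. The remaining steps are formal, or are the identification of the text's $d'_2$ with $d_2(\,\cdot\,+\sigma(\,\cdot\,))$ indicated above.
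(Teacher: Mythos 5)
Your proof is correct, but it takes a genuinely different route from the paper. The paper embeds the situation into a $3\times 3$ commutative diagram with exact columns $0\to P''_i\to P_i\to P'_i\to 0$ (where $P''_2$, $P''_1$ are generated by the $.a_{k+1}b_k^p$ and their images $d_2(.a_{k+1}b_k^p)$), observes that $d_2$ restricts to an isomorphism $P''_2\cong P''_1$, and then extracts $H'_1=0$ from the comparison of the two spectral sequences of the resulting double complex. You instead perform the Gaussian elimination explicitly: the block $e=\Pi_Q\circ d_2|_R$ is unitriangular by the degree bookkeeping plus the coefficient $-1$, hence invertible, and the contraction $\sigma=-e^{-1}\Pi_Q d_2$ lets you pull any $z\in\ker d'_1$ back through $d'_2$ directly from exactness of the Anick complex at $P_1$. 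The two arguments rest on the same key fact (your invertibility of $e$ is exactly the paper's ``$P''_1$ is free and $d_2\colon P''_2\to P''_1$ is an isomorphism,'' both proved by the same triangular induction on $k$), but your version is more elementary -- no spectral sequences, and no appeal to the vanishing of higher differentials -- and it makes the surjectivity onto $\ker d'_1$ completely explicit. A further point in your favour: the paper's one-step definition of $d'_2$ (and of $\phi_1$) only visibly lands in $\Rad(P_1)$, not in $P'_1$, whenever the off-diagonal entries $r_j$ ($j<k$) of $e$ are nonzero; your reformulation $d'_2(y)=d_2(y+\sigma(y))$, obtained by iterating the substitution, is the correct completion of that definition and is what the exactness argument actually needs. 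Your identification of the text's recipe with $d_2(\,\cdot\,+\sigma(\,\cdot\,))$ via uniqueness of the solution of $e(z)=-\Pi_Q d_2(y)$ is sound, so I see no gap.
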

\begin{proof}
Denote by $P''_2$ and $P''_1$ the submodules of $P_2$ and $P_1$ generated by
the elements of the form $.a_{k+1}b_k^p$ and $d\left( .a_{k+1}b_k^p \right)$.
It is obvious that $P''_2$
is a free submodule. Since $\left( .b_ka_k \right)^p$ has coefficient $-1$ in
the expansion of $d_2\left( .a_{k+1}b_k^p \right)$, it is easy to show by
induction on $k$, that $P''_1$ is a free submodule of $P_1$. Moreover, it is
obvious that the restriction of $d_2$ on $P''_2$ induces an isomorphism between
$P''_2$ and $P''_1$. Define the $A$-homomorphism $\phi_2\colon P_2\to P'_2$ by $\phi_2\left(.w 
\right) = .w$ if $w\in T'_2$ and  $\phi_2\left( .w \right) =0$ if $w\not \in
T'_2$. Define the $A$-homomorphism $\phi_1\colon P_1\to P'_1$ by $\phi_2\left(
.w \right) = .w$ if $w\in T'_1$ and $\phi_1\left( .\left( b_ka_k \right)^p
\right) = .\left( b_ka_k \right)^p+ d_2\left( .a_{k+1}b_k^p \right)$. Then the
diagram
$$
\xymatrix{
P''_2 \ar[r]^{\cong} \ar[d] & P''_1\ar[r] \ar[d] & 0\ar[d]\\
P_2 \ar[r]^{d_2} \ar[d]^{\phi_2} & P_1 \ar[r]^{d_1} \ar[d]^{\phi_1} &
P_0\ar[d]^{\cong}
\\
P'_2 \ar[r]^{d'_2} & P'_1 \ar[r]^{d'_1} & P_0
}
$$
is commutative, and all its columns are exact.
Therefore the corresponding vertical spectral sequence collapse at stage
$1$. The stage $1$ of the horizontal spectral sequence has the form
$$
\xymatrix{
0 & 0 & 0 \\
\mathrm{Ker}\left( d_2 \right) & 0 & \mathrm{Coker}\left( d_1 \right)\\
\mathrm{Ker}\left( d'_2 \right) & H'_1 & \mathrm{Coker}\left( d'_1 \right)
}
$$
It is easy to see that there is no high differentials that starts or terminate
at $H'_1$. Thus $H'_1$ is zero and $P'_\bullet$ is exact at the term $P'_1$. 
\end{proof}

\bibliography{anick}
\bibliographystyle{amsplain}

\end{document}